\newtheorem{thm}{Theorem}
\newtheorem{cor}[thm]{Corollary}
\newtheorem{lemma}[thm]{Lemma}
\newtheorem*{theorem*}{Theorem}
\DeclareMathOperator{\F}{\mathbb{F}}
\begin{document}
\baselineskip=16.3pt
\parskip=14pt

\begin{center}
\section*{Realizing cyclic linear transformations as Frobenius
elements in the Galois groups of $q$-polynomials over function fields}

{\large 
Rod Gow and Gary McGuire
 \\ { \ }\\
School of Mathematics and Statistics\\
University College Dublin\\
Ireland}
\end{center}

 \subsection*{Abstract}
 
We  realize Frobenius conjugacy classes   in Galois groups 
of certain  $q$-polynomials over $\F_q(t)$
using specific degree 1 ideals. 
We combine this with 
methods from elementary linear algebra and group theory
 to realize transvections in some linear Galois groups. This enables the Galois group
 to be identified as a known classical group in several reasonably general cases.
 
 MSC 11G09, 11R58
 
 Keywords Galois group, linearized polynomial, symplectic  group
 
 \newpage
 
\section{Introduction}
  
Let $p$ be a prime number, and let $q$ be a positive integer power of $p$.
Let $\F_q$ denote the finite field with $q$ elements and let $\F_q(t)$ denote the function field
over $\F_q$ in the single variable $t$. 
A $q$-polynomial over $\F_q(t)$  is a polynomial of the form 
\begin{equation}\label{lin1}
L(x)=a_n(t) x^{q^n}+a_{n-1}(t)x^{q^{n-1}}+\cdots +a_1(t)x^q+a_0(t)x \in \F_q(t)[x].
\end{equation}
If $a_n(t)\not=0$ we say that $n$ is the $q$-degree of $f$. In this case, we usually assume that
$a_n(t)=1$, so that $L$ is monic.

The set of roots in a splitting field of the $q$-polynomial forms an $\F_q$-vector space, called
the space of roots,
as the following well known result (see \cite{SL} for example) explains.

\begin{lemma} \label{vector_space}
Let $L$ be a $q$-polynomial of $q$-degree $n$ in $\F_q(t)[x]$, with
\[
L=x^{q^n}+a_{n-1}(t)x^{q^{n-1}}+\cdots +a_1(t)x^q+a_0(t)x.
\]
Let $E$ be a splitting field for $L$ over $\F_q(t)$ and let $V$ be the set of roots of $L$ in $E$. 
Let $G$ be the Galois group of $E$
over $F$. Suppose that $a_0(t)\neq 0$. 
Then $V$ is an $\F_q$-vector space of dimension $n$ and $G$ is naturally a subgroup
of $GL(n,q)$.
\end{lemma}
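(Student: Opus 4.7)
The plan is to prove the three assertions in order: additivity/$\F_q$-linearity of $L$, separability (to pin down the dimension), and then the Galois-theoretic embedding.

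First I would show that the map $L: E \to E$ is $\F_q$-linear. Because $E$ has characteristic $p$, the Frobenius $x \mapsto x^{q^i}$ is additive, and for $c \in \F_q$ one has $c^{q^i}=c$; summing the terms of $L$ this gives $L(x+y)=L(x)+L(y)$ and $L(cx)=cL(x)$. Hence $V=\ker L$ is an $\F_q$-subspace of $E$. Because $L$ has degree $q^n$, we automatically get $\dim_{\F_q} V \leq n$.

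Next I would verify equality by showing $L$ is separable. The formal derivative $L'(x)$ is $a_0(t)$, since every term $a_i(t)x^{q^i}$ with $i \geq 1$ differentiates to zero in characteristic $p$. The hypothesis $a_0(t)\neq 0$ therefore gives $\gcd(L,L')=1$, so $L$ has $q^n$ distinct roots in $E$, and $\dim_{\F_q}V=n$.

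Finally I would realize $G$ as a subgroup of $GL(V)\cong GL(n,q)$. Any $\sigma\in G$ fixes $\F_q\subset \F_q(t)$ pointwise, so $\sigma$ is $\F_q$-linear on $E$; since $\sigma$ permutes the roots of $L$, it preserves $V$, and restriction gives a homomorphism $G\to GL(V)$. Faithfulness follows because $E=\F_q(t)(V)$ (the splitting field is generated by the roots), so an automorphism trivial on $V$ is trivial on $E$.

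I do not foresee a genuine obstacle; the only point worth being careful about is invoking $a_0(t)\neq 0$ at the right moment (to kill the constant term of $L'$), since without it $L$ could be inseparable and $\dim V$ could drop. The rest is bookkeeping with Frobenius and standard Galois theory.
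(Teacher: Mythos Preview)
Your proof is correct and is exactly the standard argument for this well-known fact. The paper does not actually prove Lemma~\ref{vector_space}; it merely states it and refers to \cite{SL}, so there is nothing to compare against beyond noting that your argument is the expected one.
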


The finite-dimensional $\F_q$-vector space $V$ is inside an infinite-dimensional field extension $E$ of $\F_q$,
and it is not readily apparent how to apply methods from linear algebra, such as the primary
decomposition theorem for linear transformations. In this article we propose a method for applying 
this theory to $V$. We will use these techniques in conjunction with the basic
theory of  Frobenius conjugacy classes in the Galois group of a $q$-polynomial 
over $\F_q(t)$. Frobenius classes are well known in the theory of extension fields
of global fields,  such as $\F_q(t)$ and algebraic number fields. 
We summarize the necessary background in Section \ref{backg}.
In that section we also review relevant material from 
the theory of self-reciprocal polynomials that we will use.

The Chebotarev Density Theorem concerns the density of prime ideals that give rise to a specific
conjugacy class. 
The degree (or norm) of the ideals is not specified, although there is a refined version for function fields  in 
Rosen \cite{R} which implies that a given conjugacy 
class will appear from ideals of every sufficiently large norm.
The Chebotarev Density Theorem does not give any information about Frobenius
classes arising from ideals of degree 1.

Our Frobenius class
is obtained from ideals of degree 1,
by specializing the parameter $t$ to an element of $\F_q$ to obtain a $q$-polynomial
in $\F_q[x]$ with no repeated roots (when this is possible --
all specializations in $\F_q$ might  lead to constant term 0). 
One key point to observe is that the conventional associate of this $q$-polynomial
 is the minimal polynomial of any element in the Frobenius class, see Theorem  \ref{minimal_polynomial}. 
Thus, we obtain an element of the Galois group of $L$ that acts as a cyclic element
with \emph{known} minimal polynomial 
on the space of roots of $L$.  

Use of these Frobenius classes can help to elucidate properties of the Galois group, without necessarily knowing the group in detail. It may for instance be possible to show that
some power of an element in a Frobenius class is a transvection, although
a transvection itself will virtually never arise by this degree 1 specialization process.  
This indeed is our main working method, as illustrated by
the following theorem proved in Section \ref{symp_sect}.

\begin{thm}
Let $E$ be a Galois  extension of $\F_q(t)$ which is  the splitting field of $L(x)$,
where $L(x)$ satisfies certain hypotheses (see 
Theorem $\ref{symplectic_group_as_Galois_group}$ for the precise statement).
Let $G$ be the Galois group of $E/\F_q(t)$.
Then there is a   prime ideal of degree $1$ in $\F_q[t]$
whose corresponding Frobenius class in G contains an element $\sigma$
with the property that a power of $\sigma$ is a symplectic transvection.
\end{thm}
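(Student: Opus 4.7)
The plan is to specialize the parameter $t$ to a carefully chosen $t_0 \in \F_q$, use Theorem~\ref{minimal_polynomial} to read off the minimal polynomial of the associated Frobenius element $\sigma$ on the root space $V$, and then raise $\sigma$ to an appropriate power to produce a rank-one unipotent. Under the standing assumption $G \le Sp(V)$ (which should be part of the hypotheses of Theorem~\ref{symplectic_group_as_Galois_group}), such an element is automatically a symplectic transvection.

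Concretely, I would first use the structural hypotheses that Theorem~\ref{symplectic_group_as_Galois_group} places on the coefficients $a_i(t)$ to exhibit an element $t_0 \in \F_q$ with $a_0(t_0) \ne 0$ (so that $L_{t_0}$ is separable) for which the conventional associate $\widetilde{L_{t_0}}(x)$ factors in $\F_q[x]$ as $(x-\alpha)^2 g(x)$, where $\alpha \in \F_q^\times$, $g(\alpha) \ne 0$, and $g$ itself is separable. By Theorem~\ref{minimal_polynomial}, any representative $\sigma$ of the Frobenius class attached to the degree-one prime $(t-t_0)$ has minimal polynomial exactly $(x-\alpha)^2 g(x)$ on $V$; since this has degree $n = \dim V$, $\sigma$ acts cyclically on $V$.

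By primary decomposition over $\F_q$, $V = V_1 \oplus V_g$ with $\dim V_1 = 2$, $\sigma|_{V_1}$ a single Jordan block with eigenvalue $\alpha$, and $g(\sigma|_{V_g}) = 0$. Every root of $g$ lies in $\overline{\F_q}^\times$ and so has multiplicative order coprime to $p$; let $m$ be the least common multiple of the orders of $\alpha$ and of the roots of $g$, so that $p \nmid m$, $\alpha^m = 1$, and $g(x) \mid x^m - 1$. Writing $\sigma|_{V_1} = \alpha I + N$ with $N$ of rank one and $N^2 = 0$, a direct computation gives
\[
\sigma^m|_{V_1} \;=\; \alpha^m I + m\alpha^{m-1} N \;=\; I + m\alpha^{m-1} N,
\]
while $\sigma^m|_{V_g} = I$. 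Since $p \nmid m$, the matrix $\sigma^m - I$ has rank one and square zero, so $\sigma^m$ is a transvection in $GL(V)$, and therefore a symplectic transvection.

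The principal obstacle is the first step: producing a specialization $t_0$ for which $\widetilde{L_{t_0}}$ has some $\alpha \in \F_q^\times$ as a root of multiplicity exactly two, with all other roots simple and distinct from $\alpha$. This is a delicate combination of vanishing and non-vanishing conditions on the coefficients $a_i(t_0)$ and must be engineered from the explicit form of $L$ postulated in Theorem~\ref{symplectic_group_as_Galois_group}: one expects the $a_i(t)$ to be arranged so that at some specific $t_0$ exactly two roots of $\widetilde{L_{t_0}}$ collide at a nonzero $\alpha \in \F_q$, without creating higher multiplicities or further coincidences among the remaining roots. Once such a $t_0$ is in hand, the linear-algebraic portion of the argument above is essentially forced.
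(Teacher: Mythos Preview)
Your proposal is correct and follows essentially the same approach as the paper. The ``principal obstacle'' you flag is dissolved by the actual hypotheses of Theorem~\ref{symplectic_group_as_Galois_group}: the polynomial is $L(x)=\Phi(x)+tx^{q^m}$ with $\Phi$ $q$-palindromic, the specialization is simply $t_0=0$, and the assumption is placed directly on the conventional associate $M(x)$ of $\Phi$, namely that $M(x)=(x-1)^2M_1(x)$ with $M_1$ square-free and not divisible by $x\pm 1$; so your $\alpha$ is $1$ and your $g$ is $M_1$ by fiat. For the linear-algebraic step the paper packages your explicit computation of $\sigma^m$ as the Jordan--Chevalley (unipotent/semisimple) decomposition $\sigma=\sigma_1\sigma_2$ of Lemma~\ref{extension_lemma}, observing that the $p$-part $\sigma_1$ is a power of $\sigma$ and is the desired transvection; this is the same argument in slightly more abstract clothing.
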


As a corollary we are able to give an alternative proof of the well known result, that the 
symplectic group $Sp(2m,q)$ is a Galois group over $\F_q(t)$. 
In fact we provide a large class of palindromic polynomials with this Galois group.
The details are in Section \ref{symp_sect}.

We prove similar results in the context of the nonsplit orthogonal group $O^- (2m,q)$ in Section \ref{orth_sect},
finding orthogonal transvections when $q$ is even, and reflections when $q$ is odd.
The details are in Theorems  \ref{orthogonal_group_in_characteristic_2}
and \ref{orthogonal_group_in_odd_characteristic}.

In the last section of the paper, we develop a related theme, namely the idea of a conventional associate
of the $q$-palindromic polynomial $\Phi(x)+tx^{q^m}$. This is a self-reciprocal polynomial
of degree $2m$ in $\F_q(t)[x]$. Its Galois group is a subgroup of the Weyl group
of the symplectic group. 
We show that our Galois group contains a normal elementary abelian
subgroup of order $2^m$ and it is a wreath product of the form $Z_2\wr H$, where $H$ is a
transitive subgroup of the symmetric group $S_m$. We construct examples where
$H$ is $S_m$ or $A_m$, and we show other possibilities.

 Section V of \cite{MM}, together with its references, gives a good account of the problem of realizing finite groups of Lie type as Galois groups over function fields of prime characteristic. The authors use what they call additive polynomials, also
 known as vectorial polynomials, which are types of the $q$-polynomials that we study. They use the method of Frobenius
modules, a technique originally developed by Matzat. The most complete application of this technique
to realize virtually all Chevalley groups and their twisted analogues is found in the
paper by Albert and Maier, \cite{AM}. 
No powerful classification theorems of finite group theory are invoked, however quite
complex calculations with generators are necessary from time to time. Section V of \cite{MM} is mainly
an exposition of some of the results of \cite{AM}, although it includes updates
and improvements. 

The main alternative method employs Abhyankar's philosophy of \lq\lq nice polynomials
for nice groups\rq\rq. The basic idea is to start with some $q$-polynomial that one suspects
may have the properties one wants and then by various means determine the Galois group. This
may seem hit and miss, but experience and experimentation often lead to sensible examples
to use. Abhyankar and his fellow researchers frequently made use of sophisticated
classification theorems in the theory of permutation groups, and occasionally the proofs
of certain subsidiary theorems were found to be inadequate or at least difficult to follow.
Most of these problems have been resolved, but sometimes several years after their first use.
A selection of Abhyankar's numerous papers on the subject can be found in the bibliography
of \cite{MM}. We mention the paper \cite{AI} as a good exemplar of the methods used.

One may ask if any more needs to be said on the subject of realizing, say, finite classical
groups as Galois groups in their own natural characteristic, in view of the encyclopaedic
treatment already available. We feel that we have an alternative approach, not least
because our main tool is the basic theory of linear transformations applied in the context
of Galois group actions on finite vector spaces.
We can view the $q$-polynomial $\Phi(x)$ in $\F_q[x]$ used to obtain our polynomials
in $\F_q(t)[x]$ as a parameter, and as it runs through its various appropriate values,
we obtain a family of realizations of a certain classical group as a Galois group,
with the different $\Phi$ defining a large range of Frobenius classes with different minimal
polynomials.

 \section{Background}\label{backg}
 
\subsection{Galois theory relating to ring extensions}
 
 \noindent  Let $L$ be a $q$-polynomial of $q$-degree $n$ over $\F_q(t)$.
 We 
 specialize the indeterminate $t$ to some value in $\F_q$ and then examine the properties
 of the resulting polynomial. (It is possible to specialize $t$ to values
 in extension fields of $\F_q$ but this works less satisfactorily for our purpose.) This section is devoted to understanding the outcome of this process, and it relies on the exposition given by Lang, \cite{L}, pp.340-346. (We remark that the proofs given by Lang are apparently flawed with respect
 to dealing with problems of inseparability but there is no problem for us, as the residue fields
 we use are finite.)
 
 Let $I=I_\mu$ be the prime ideal in $\F_q[t]$ generated by the linear polynomial $t-\mu$,
 where $\mu\in \F_q$. Let $R$ be the integral closure of $\F_q[t]$ in the splitting field
 $E$ over $\F_q(t)$. There are only finitely many prime ideals of $R$ lying above $I$,
 and the Galois group $G$ permutes these ideals transitively in its action on $R$. Thus, let
 $J$ be a prime ideal of $R$ lying over $I$ and let $D=D_J$ be the subgroup of $G$ that fixes $J$. We call $D$ the decomposition group connected to this configuration. If we choose
 a different prime ideal lying above $I$, we obtain another decomposition group
 but it is conjugate to $D$ on account of the transitive $G$-action on prime ideals.
 Thus we have a uniquely determined $G$-conjugacy class of decomposition groups,
 and we have chosen one member of the class.
 
 We consider the quotient ring $\overline{R}=R/J$. This is a finite field extension of
 $\F_q[t]/I=\F_q$. Given $z$ in $R$, we let $\overline{z}=z+J\in \overline{R}$. The map
 sending $z$ to $\overline{z}$ is a ring homomorphism from $R$ onto $\overline{R}$, and it is
 also $\F_q$-linear. We call this the reduction homomorphism. We apply this reduction
 homomorphism to the coefficients of $L$ to obtain a polynomial $\overline{L}$ in
 $\F_q[x]$. Given the assumption that $L$ is monic of $q$-degree $n$, $\overline{L}$
 is also a monic $q$-polynomial of $q$-degree $n$ in $\F_q[x]$. 
 
 There is an induced action of $D$ on $\overline{R}$, defined as follows. Given $\sigma\in D$,
 define $\overline{\sigma}$ acting on $\overline{R}$ by
 \[
 \overline{\sigma}(\overline{z})=\overline{\sigma(z)}
 \]
 for $z$ in $R$. This is well defined, since $D$ fixes $J$ as a set. The map sending $\sigma$ to
 $\overline{\sigma}$ is a homomorphism from $D$ onto a subgroup $\overline{D}$ of $\F_q$-automorphisms of the finite field $\overline{R}$. Provided that $\overline{L}$ has
 no repeated roots (equivalently, provided that the $x$ term of $\overline{L}$ is nonzero), $D$ is isomorphic to $\overline{D}$
 and $\overline{D}$ is the full group of $\F_q$-automorphisms of $\overline{R}$. Thus, in this unramified case, since $\overline{D}$ is cyclic, $D$ is also cyclic, and we may choose
 a generator $\sigma$ of $D$ so that
\[
\overline{\sigma}(\overline{z})=(\overline{z})^q
\]
for all $z$ in $R$.

Now, in the context of $q$-polynomials, we have an enhanced structure which we intend to exploit
to obtain more information about the action of $\sigma$
on the space of roots of $L$. Recall that we have a vector space of roots of $L$ in $E$.
More specifically, $V$ is an $n$-dimensional $\F_q$-vector space contained in $R$, and $D$ acts
as $\F_q$-linear automorphisms of $V$. Under the assumption that the $x$ term of
$\overline{L}$ is nonzero, we also have a vector space $\overline{V}$ of roots of
$\overline{L}$, which is also $n$-dimensional over $\F_q$. Note that given $\alpha$ in $V$,
$\overline{\alpha}$ is in $\overline{V}$. 

\begin{lemma} \label{isomorphism_of_spaces_of_roots}
 The map $\phi:V\to \overline{V}$ given by
 \[
 \phi(\alpha)=\overline{\alpha}
 \]
 is an $\F_q$-linear isomorphism. 
 \end{lemma}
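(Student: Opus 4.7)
The plan is to verify in sequence that $\phi$ is well-defined, $\mathbb{F}_q$-linear, and bijective.

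Well-definedness is immediate from the fact that reduction modulo $J$ is a ring homomorphism: for any $\alpha \in V$, one has $\overline{L}(\overline{\alpha}) = \overline{L(\alpha)} = \overline{0} = 0$, so $\overline{\alpha} \in \overline{V}$. Linearity is also routine. Because $J \cap \F_q[t] = I = (t-\mu)$ contains no nonzero constants, the reduction map restricts to the identity on $\F_q$; since it also respects addition and scalar multiplication by $\F_q$, the induced map $\phi$ is $\F_q$-linear.

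The content is in proving bijectivity. Applying Lemma~\ref{vector_space} to $\overline{L}$ (which is permissible because the hypothesis that the $x$-coefficient of $\overline{L}$ is nonzero is precisely $a_0(\mu) \ne 0$) shows that $\overline{V}$ is an $\F_q$-vector space of dimension $n$, matching $\dim_{\F_q} V$. Hence it suffices to show $\phi$ is injective. The key step is to exhibit the factorization of $L$ inside $R[x]$. Since $L$ is monic with coefficients in $\F_q[t] \subseteq R$, every root of $L$ is integral over $\F_q[t]$ and therefore lies in $R$, giving
$$L(x) = \prod_{\gamma \in V}(x - \gamma) \in R[x].$$
Reducing coefficientwise modulo $J$ then yields
$$\overline{L}(x) = \prod_{\gamma \in V}(x - \overline{\gamma}) \in \overline{R}[x].$$
Because $\overline{L}$ has no repeated roots by hypothesis, the $q^n$ elements $\overline{\gamma}$ appearing on the right must be pairwise distinct, which is exactly the injectivity of $\phi$.

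The only real subtlety I anticipate is justifying that the factorization of $L$ descends from $E[x]$ to $R[x]$, so that coefficientwise reduction modulo $J$ makes sense and produces a corresponding factorization of $\overline{L}$. This rests on the monicity of $L$ together with the integrality of the roots in $V$ over $\F_q[t]$, both of which are in place. Once this factorization is available, the implication ``injective $\Rightarrow$ bijective'' is a matter of finite cardinalities, and well-definedness and linearity are formal consequences of the reduction being an $\F_q$-algebra homomorphism.
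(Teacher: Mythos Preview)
Your proof is correct. The paper disposes of this lemma in a single sentence---``This is clear, since the reduction map is $\F_q$-linear and the two spaces have the same dimension, given the assumption on the $x$ term in $\overline{L}$''---and does not spell out why linearity plus equal dimension suffices; as you rightly note, one still needs injectivity or surjectivity. Your factorization argument, reducing $L(x)=\prod_{\gamma\in V}(x-\gamma)$ modulo $J$ and using that $\overline{L}$ has no repeated roots, is precisely the missing step that makes the paper's claim rigorous, so your approach is not so much different from the paper's as it is a fleshed-out version of it.
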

 
 This is clear, since the reduction map is $\F_q$-linear and the two spaces have the same dimension, given the assumption on the $x$ term in $\overline{L}$.
 
 Additionally, we have an action of the decomposition group $D$ on $V$ and of $\overline{D}$ on $\overline{V}$. These actions
 are $\F_q$-isomorphic, as we now show.
 
 \begin{lemma} \label{isomorphism_of_group_actions}
 The map $\phi:V\to \overline{V}$ 
 is an $\F_q$-linear isomorphism between the actions of $D$ and $\overline{D}$ on the respective
 vector spaces.
 \end{lemma}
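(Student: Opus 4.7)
The plan is to build directly on the preceding Lemma \ref{isomorphism_of_spaces_of_roots}, which already gives that $\phi$ is an $\F_q$-linear isomorphism of vector spaces. What remains is to show that $\phi$ is equivariant with respect to the two group actions, so one first needs to check that the actions are well defined on the respective spaces, and then exhibit the intertwining identity.

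First I would confirm that $V$ is stable under $D$ and that $\overline{V}$ is stable under $\overline{D}$. The former holds because every $\sigma \in D \subseteq G$ is a Galois automorphism over $\F_q(t)$ and therefore permutes the roots of $L$, which is exactly $V$. The latter holds analogously: $\overline{\sigma}$ is an $\F_q$-automorphism of $\overline{R}$ which fixes the coefficients of $\overline{L}$, so it permutes the roots of $\overline{L}$, which form $\overline{V}$.

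Next I would verify the equivariance. For $\sigma\in D$ and $\alpha\in V$, unwinding the definition of the induced action $\overline{\sigma}$ given just above Lemma \ref{isomorphism_of_spaces_of_roots} yields
\[
\phi(\sigma(\alpha)) \;=\; \overline{\sigma(\alpha)} \;=\; \overline{\sigma}(\overline{\alpha}) \;=\; \overline{\sigma}(\phi(\alpha)).
\]
This is essentially a tautology from how $\overline{\sigma}$ was constructed via the reduction homomorphism, and once combined with the isomorphism from the prior lemma, it gives the desired intertwining.

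There is no real obstacle here; the only subtle point is making sure one quotes the assumption that the $x$-term of $\overline{L}$ is nonzero, which is what guarantees via Lemma \ref{isomorphism_of_spaces_of_roots} that $\phi$ is an isomorphism rather than merely a linear map, and also ensures (via the unramified case discussion above) that the map $\sigma\mapsto\overline{\sigma}$ from $D$ onto $\overline{D}$ is itself an isomorphism, so the actions of $D$ and $\overline{D}$ correspond in a clean way.
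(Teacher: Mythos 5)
Your proposal is correct and follows essentially the same route as the paper: the paper's proof is exactly the chain $\phi(\sigma\alpha)=\overline{\sigma(\alpha)}=\overline{\sigma}(\overline{\alpha})=\overline{\sigma}\phi(\alpha)$, combined with the prior lemma giving that $\phi$ is a linear isomorphism. Your additional checks (stability of $V$ under $D$ and of $\overline{V}$ under $\overline{D}$, and the role of the nonzero $x$-term hypothesis) are points the paper leaves implicit, but they do not change the argument.
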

 
 \begin{proof}
 We have by definition 
  \[
 \overline{\sigma}(\overline{\alpha})=\overline{\sigma(\alpha)}
 \]
 for $\sigma$ in $D$ and $\alpha$ in $V$. Thus,
 \[
 \phi(\sigma\alpha)=\overline{\sigma(\alpha)}=\overline{\sigma}(\overline{\alpha})=
 \overline{\sigma}\phi(\alpha)
 \]
 and this is what we require for an $\F_q$-isomorphism of actions.
 \end{proof}

 \begin{cor} \label{minimal_polynomials}
With the notation as before, suppose that the $x$ term of the reduced polynomial $\overline{L}$ is nonzero. Then the minimal polynomial of the element $\sigma$ in $G$ (defined with
respect to the degree $1$ ideal $I$) acting on the space of roots $V$ of $L$ is the same as the minimal polynomial of the Frobenius $q$-power map acting on the space of roots $\overline{V}$
of $\overline{L}$.
\end{cor}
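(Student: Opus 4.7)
The plan is to exploit the $\F_q$-linear isomorphism of actions already established in Lemma \ref{isomorphism_of_group_actions}. Since $\phi : V \to \overline{V}$ satisfies $\phi \circ \sigma = \overline{\sigma} \circ \phi$, the two $\F_q$-linear maps $\sigma$ (on $V$) and $\overline{\sigma}$ (on $\overline{V}$) are conjugate by an invertible linear map. Conjugate linear operators always share the same minimal polynomial, so it suffices to identify $\overline{\sigma}$ with the Frobenius $q$-power map on $\overline{V}$.

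First I would recall the defining property of the generator $\sigma$ of the decomposition group $D$ chosen in the preceding discussion, namely $\overline{\sigma}(\overline{z}) = (\overline{z})^q$ for every $z \in R$. Restricting this identity to $z \in V$ (so that $\overline{z} \in \overline{V}$) gives exactly that $\overline{\sigma}$ acts on $\overline{V}$ as the $q$-power Frobenius. Thus the minimal polynomial of $\overline{\sigma}$ on $\overline{V}$ is, by definition, the minimal polynomial of the Frobenius map on the space of roots of $\overline{L}$.

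Next I would invoke Lemma \ref{isomorphism_of_group_actions} directly: for any polynomial $f(x) \in \F_q[x]$, the relation $\phi \circ \sigma = \overline{\sigma} \circ \phi$ extended multiplicatively and additively gives $\phi \circ f(\sigma) = f(\overline{\sigma}) \circ \phi$. Since $\phi$ is a bijection, $f(\sigma) = 0$ on $V$ if and only if $f(\overline{\sigma}) = 0$ on $\overline{V}$. Hence the two annihilator ideals in $\F_q[x]$ coincide, and so do the (monic) minimal polynomials.

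There is no real obstacle here; the corollary is a formal consequence of the two preceding lemmas together with the chosen normalization of $\sigma$. The only thing worth flagging explicitly is the hypothesis that the $x$-term of $\overline{L}$ is nonzero, which is needed so that $\overline{L}$ is separable of the correct $q$-degree and $\phi$ is genuinely an isomorphism (rather than merely a surjection with nontrivial kernel); under this hypothesis everything goes through without further work.
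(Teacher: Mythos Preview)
Your argument is correct and is exactly what the paper intends: the corollary is stated there without proof, as an immediate consequence of Lemma~\ref{isomorphism_of_group_actions} together with the chosen normalization $\overline{\sigma}(\overline{z})=(\overline{z})^q$. Your write-up simply makes explicit the one-line deduction that conjugate linear operators share a minimal polynomial and that $\overline{\sigma}$ restricted to $\overline{V}$ is the Frobenius $q$-power map.
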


Thus although $\sigma$ need not act as a $q$-power mapping on $V$, we can determine its minimal
polynomial by looking at the action of $\overline{\sigma}$ on $\overline{V}$, which is a $q$-power mapping.

What we think is the main significance of this explanation is that, in principle, it  is straightforward
to find the minimal polynomial of the $q$-power mapping on the space of roots of a $q$-polynomial
in $\F_q[x]$, as the next result reveals.

\begin{thm} \label{minimal_polynomial}
Let $P$ be a monic $q$-polynomial of $q$-degree $n$ in $\F_q[x]$ with
\[
P(x)= x^{q^n}+a_{n-1}x^{q^{n-1}}+\cdots+a_1x^q +a_0 x,
\]
where $a_0\neq 0$. Let $E$ be a splitting field for $P$ over $\F_q$ and let $W$
be the space of roots of $P$ in $E$. Let $F$ be the Frobenius $q$-power mapping of $W$ into itself. Then the minimal polynomial $m(x)$ of $F$ acting on $W$ is
\[
x^n+a_{n-1}x^{n-1}+\cdots +a_1x+a_0.
\]
Moreover, $W$ is cyclic with respect to the action of $F$, so that there is a root
$\alpha$, say, of $P$ such that $\alpha$, $F(\alpha)$, \dots, $F^{n-1}(\alpha)$
are an $\F_q$-basis of $W$.
\end{thm}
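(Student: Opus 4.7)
The plan is to view the $q$-polynomial $P$ as the evaluation at $x$ of a polynomial in the Frobenius operator $F$. Because each coefficient $a_i$ lies in $\F_q$ and $F$ is $\F_q$-linear on $E$, the expression
\[
m(F) = F^n + a_{n-1}F^{n-1} + \cdots + a_1 F + a_0
\]
is a well-defined $\F_q$-linear endomorphism of $E$, and by direct inspection $P(\alpha) = m(F)(\alpha)$ for every $\alpha \in E$. In particular, $m(F)$ annihilates $W$. Since $F$ permutes the roots of the $\F_q$-polynomial $P$, the subspace $W$ is $F$-invariant, so $F|_W$ is a genuine $\F_q$-linear endomorphism of $W$, and its minimal polynomial must divide $m(x)$.

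For the reverse divisibility I would argue by counting roots. Since $P'(x) = a_0$ is a nonzero constant, $P$ is separable, hence has $q^n$ distinct roots, so $|W| = q^n$. Now suppose the minimal polynomial $\mu(x)$ of $F|_W$ had degree $k < n$, say $\mu(x) = x^k + b_{k-1}x^{k-1} + \cdots + b_0$. Then $\mu(F)$ vanishes on $W$, which says that every element of $W$ is a root of the $q$-polynomial
\[
Q(x) = x^{q^k} + b_{k-1}x^{q^{k-1}} + \cdots + b_0 x.
\]
But $Q$ has ordinary degree $q^k < q^n = |W|$, which is impossible. Hence $\mu = m$.

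For the cyclicity assertion, I would invoke the invariant factor decomposition of $W$ viewed as an $\F_q[x]$-module with $x$ acting as $F$: the invariant factors divide one another, their degrees sum to $\dim_{\F_q} W = n$, and the largest of them equals the minimal polynomial $m(x)$, which itself has degree $n$. Consequently there is only a single invariant factor, $W$ is $F$-cyclic, and any generator $\alpha$ furnishes the required basis $\alpha, F(\alpha), \ldots, F^{n-1}(\alpha)$. The only step demanding any care is the translation between polynomial relations $\mu(F)=0$ on $W$ and $q$-polynomial identities on roots, which is precisely the dictionary $\sum c_i F^i \leftrightarrow \sum c_i x^{q^i}$ at the heart of the statement; everything else is separability plus a cardinality comparison.
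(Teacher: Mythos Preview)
Your proof is correct and follows essentially the same route as the paper's: verify that $m(F)$ annihilates $W$ via the dictionary $F^i(\alpha)=\alpha^{q^i}$, rule out any annihilating polynomial of smaller degree by the root-count contradiction with the associated $q$-polynomial, and deduce cyclicity from the fact that the minimal polynomial already has degree $\dim W$. The only cosmetic differences are that you spell out separability via $P'(x)=a_0$ and phrase the cyclicity step in terms of invariant factors, whereas the paper cites a textbook exercise for the same fact.
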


\begin{proof}
Let $\alpha$ be an element of $W$. Then we have $F^i(\alpha)=\alpha^{q^i}$ for all
positive integers $i$. Since $P(\alpha)=0$, it is clear that
\[
m(F)(\alpha)=0,
\]
so that $m(F)$ is the zero transformation of $W$. 

Suppose if possible that there is a monic polynomial $g$, say, in $\F_q[x]$ of degree $d$ less than
$n$ such that $g(F)=0$ on $W$. Then if we set
\[
g(x)=x^d+b_{d-1}x^{d-1}+\cdots +b_0,
\]
we find that the elements of $W$ are roots of the $q$-polynomial
\[
Q(x)=x^{q^d}+b_{d-1}x^{q^{d-1}}+\cdots +b_0 x,
\]
of $q$-degree $d$. But $Q$ has at most $q^d$ roots in its splitting field, whereas we have
$q^n$ roots in $W$ which are also roots of $Q$. This is a contradiction
and hence $m(x)$ is the minimal polynomial of $F$.

Finally, concerning cyclicity, the minimal polynomial of $F$ acting on $W$ has degree $n=\dim W$.
It is a basic theorem of linear algebra, see \cite{J}, Exercise 7, p.202, for example, that $W$ is cyclic with respect to $F$. (The result 
is a simple consequence of the theory of the rational canonical form.) This implies in addition that $m(x)$ is also the characteristic polynomial
of $F$.
\end{proof}

The appearance of the polynomial $m(x)$ in this description is well known in the theory
of $q$-polynomials over $\F_q$, as described for example in \cite{LN}, Chapter 3, Section 3, where $m(x)$
is called the conventional associate of the $q$-polynomial. Much of this theory was developed by Ore in the 1930's. In our opinion, the interpretation of the conventional associate
as the minimal polynomial of the Frobenius mapping makes more sense and enables various
theorems relating to $q$-polynomials to be proved by simple principles of the theory of linear transformations.

\begin{cor} \label{cyclic_element}
Let
\[
P(x)=x^{q^n}+a_{n-1}(t)x^{q^{n-1}}+\cdots +a_1(t)x^q+a_0(t)x,
\]
be a monic $q$-polynomial of $q$-degree $n$ whose coefficients $a_i(t)$ are elements
of the polynomial ring $\F_q[t]$. Let $G$ be the Galois group of $P$ over $\F_q(t)$.
Suppose that for some element $\lambda$ of $\F_q$, $a_0(\lambda)\neq 0$.  Then there is an element in $G$ that acts as a cyclic linear
transformation on the space $V$ of roots of $P$ whose minimal polynomial for this action is 
\[
x^n+a_{n-1}(\lambda)x^{n-1}+\cdots + a_1(\lambda)x+a_0(\lambda).
\]
\end{cor}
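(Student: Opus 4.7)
The plan is to apply the specialization machinery developed in the preceding discussion with $\mu=\lambda$, and then combine Corollary \ref{minimal_polynomials} with Theorem \ref{minimal_polynomial} to read off both the cyclicity and the explicit minimal polynomial. Because the hypotheses of the corollary match those of the theory almost verbatim, the proof should essentially be an assembly argument, not a computation.

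First, I would specialize $t\mapsto \lambda$ using the prime ideal $I_\lambda=(t-\lambda)$ in $\F_q[t]$. The reduction homomorphism sends $P$ to
\[
\overline{P}(x)=x^{q^n}+a_{n-1}(\lambda)x^{q^{n-1}}+\cdots+a_1(\lambda)x^q+a_0(\lambda)x \in \F_q[x],
\]
which is a monic $q$-polynomial of $q$-degree $n$ whose $x$-coefficient $a_0(\lambda)$ is nonzero by hypothesis. This nonvanishing is exactly the hypothesis needed to put us in the unramified situation: $\overline{P}$ has no repeated roots, the decomposition group $D=D_J$ attached to a prime ideal $J$ of the integral closure $R$ lying above $I_\lambda$ is cyclic, and it admits a generator $\sigma$ whose image $\overline{\sigma}$ in $\overline{D}$ acts as the Frobenius $q$-power map on the residue field $\overline{R}$.

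Next I would invoke Corollary \ref{minimal_polynomials}: the minimal polynomial of $\sigma$ acting on the space of roots $V$ of $P$ coincides with the minimal polynomial of $\overline{\sigma}$ (\ie the Frobenius $q$-power map) acting on the space of roots $\overline{V}$ of $\overline{P}$. Applying Theorem \ref{minimal_polynomial} to $\overline{P}$ — which is legal since $a_0(\lambda)\neq 0$ — this minimal polynomial is exactly
\[
x^n+a_{n-1}(\lambda)x^{n-1}+\cdots+a_1(\lambda)x+a_0(\lambda),
\]
and moreover $\overline{V}$ is cyclic with respect to $\overline{\sigma}$. Transporting cyclicity back through the $\F_q$-linear isomorphism $\phi:V\to\overline{V}$ of Lemmas \ref{isomorphism_of_spaces_of_roots} and \ref{isomorphism_of_group_actions} then shows that $V$ is cyclic under $\sigma$, completing the proof.

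There is no real obstacle here; the only point to watch is that Corollary \ref{minimal_polynomials} and Theorem \ref{minimal_polynomial} both require the $x$-coefficient of the relevant $q$-polynomial to be nonzero, and this is precisely what the hypothesis $a_0(\lambda)\neq 0$ supplies. Once this check is made, the corollary is immediate from the previously established apparatus.
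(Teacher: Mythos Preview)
Your proposal is correct and is precisely the assembly argument the paper intends: the corollary is stated without proof in the paper because it follows immediately from Corollary~\ref{minimal_polynomials} and Theorem~\ref{minimal_polynomial} applied to the specialization at $t=\lambda$, exactly as you outline. The only minor remark is that you could alternatively deduce cyclicity of $\sigma$ on $V$ directly from the fact that its minimal polynomial has degree $n=\dim V$ (as in the last paragraph of the proof of Theorem~\ref{minimal_polynomial}), rather than transporting it through $\phi$, but both routes are equally valid.
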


We call the conjugacy class of the Galois group $G$ determined by the specialization
of $t$ to $\lambda$ the corresponding Frobenius class and we also call an element of this 
class a Frobenius element. In many cases, there are $q$ different Frobenius classes, determined
by the elements of $\F_q$, but generally in this paper, we make use of one particular
Frobenius class that we can exploit most effectively.

\subsection{Some results from linear algebra}

\noindent To prove our main theorem, we require some results from elementary linear algebra, most of which we shall quote without proof. We start
with what is called the primary decomposition of a vector space with respect to a linear
transformation. See, for example, \cite{J}, Theorem 3.11, p.191.

\begin{lemma} \label{primary_decomposition}
Let $W$ be a finite dimensional vector space over $\F_q$ and let $\sigma$ be an $\F_q$-linear
transformation of $W$ into itself. Let $m(x)$ be the minimal polynomial of $\sigma$ acting on $W$. Let
\[
m(x)=p_1(x)^{r_1}\cdots p_t(x)^{r_t}
\]
be the factorization of $m(x)$ into powers of different irreducible monic polynomials
$p_i(x)$, with multiplicity $r_i$ for $1\leq i\leq t$. Then there is a unique decomposition
\[
W=W_1\oplus \cdots \oplus W_t,
\]
of $W$ into $\sigma$-invariant subspaces $W_i$, where the minimal polynomial of $\sigma$ acting
on $W_i$ is $p_i(x)^{r_i}$, $1\leq i\leq t$.
\end{lemma}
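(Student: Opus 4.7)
The plan is to produce the summands explicitly as kernels of the ``generalized eigenspace'' operators $p_i(\sigma)^{r_i}$ and then assemble the decomposition using a Bezout partition of unity in $\F_q[x]$. For each $i$, define $W_i = \ker p_i(\sigma)^{r_i} \subseteq W$. Each $W_i$ is $\sigma$-invariant because $\sigma$ commutes with every polynomial in $\sigma$, so these are the only candidates for the summands, and the task is to show that they fit together as claimed.

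Next I would establish $W = W_1 + \cdots + W_t$. Set $f_i(x) = \prod_{j \neq i} p_j(x)^{r_j}$; since the $p_i^{r_i}$ are pairwise coprime, $\gcd(f_1, \ldots, f_t) = 1$, so Bezout produces $h_i(x) \in \F_q[x]$ with $\sum_i h_i(x) f_i(x) = 1$. Substituting $\sigma$ and applying to $w \in W$ gives $w = \sum_i w_i$ with $w_i = h_i(\sigma) f_i(\sigma) w$, and indeed $p_i(\sigma)^{r_i} w_i = h_i(\sigma) m(\sigma) w = 0$, so $w_i \in W_i$. To show the sum is direct, suppose $w_1 + \cdots + w_t = 0$ with $w_i \in W_i$. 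Applying $f_k(\sigma)$ kills $w_i$ for $i \neq k$ (because $p_i^{r_i}$ divides $f_k$), leaving $f_k(\sigma) w_k = 0$; a second Bezout relation $a(x) p_k(x)^{r_k} + b(x) f_k(x) = 1$ then yields $w_k = a(\sigma) p_k(\sigma)^{r_k} w_k + b(\sigma) f_k(\sigma) w_k = 0$.

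Finally I would identify the minimal polynomials on the summands and argue uniqueness. Let $m_i$ be the minimal polynomial of $\sigma|_{W_i}$; the definition of $W_i$ yields $m_i \mid p_i^{r_i}$, so $m_i = p_i^{s_i}$ for some $s_i \leq r_i$. On a direct sum the minimal polynomial is the least common multiple of the $m_i$, namely $\prod p_i^{s_i}$, and comparing this with $m(x) = \prod p_i^{r_i}$ forces $s_i = r_i$ for every $i$. For uniqueness, any $\sigma$-invariant decomposition $W = W_1' \oplus \cdots \oplus W_t'$ on whose summands $\sigma$ has minimal polynomial $p_i^{r_i}$ automatically satisfies $W_i' \subseteq \ker p_i(\sigma)^{r_i} = W_i$, and equality then follows from the dimension identity $\sum \dim W_i' = \dim W = \sum \dim W_i$ together with the containments. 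The only substantive ingredient in this argument is Bezout in the principal ideal domain $\F_q[x]$; there is no real obstacle, only the careful bookkeeping of coprime factors.
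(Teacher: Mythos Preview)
Your proof is correct and is the standard argument for the primary decomposition theorem. The paper does not actually prove this lemma: it merely quotes the result and refers the reader to Jacobson, \emph{Basic Algebra I}, Theorem 3.11, so there is no in-paper proof to compare against. Your explicit construction of the $W_i$ as $\ker p_i(\sigma)^{r_i}$ together with the Bezout partition of unity is exactly the textbook route, and all the steps (spanning, directness, identification of the minimal polynomial on each summand, and uniqueness via containment plus dimension count) are handled cleanly.
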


Note that, along similar lines, if we have a factorization $m(x)=m_1(x)m_2(x)$ of the minimal
polynomial of $\sigma$, where $m_1$ and $m_2$ are relatively prime, there is a corresponding
decomposition
\[
W=W_1\oplus W_2
\]
of $W$ into $\sigma$-invariant subspaces $W_i$, where the minimal polynomial
of $\sigma$ acting on $W_i$ is $m_i$.

The following result is well known, but we provide a reference for the proof.

\begin{lemma} \label{order_prime_to_p}
Let $W$ be a finite dimensional vector space over $\F_q$ and let $\sigma$ be an $\F_q$-linear
transformation of $W$ into itself. Suppose that $\sigma$ acts on $W$ as a cyclic
transformation, with minimal polynomial $m(x)$. Suppose that
\[
m(x)=p_1(x)\cdots p_t(x)
\]
where 
$p_i(x)$ is a monic irreducible polynomial of degree $d_i$, for $1\leq i\leq t$, with
no $p_i$ equal to $x$. Then the order of $\sigma$ in the group of automorphisms of $W$
is a divisor of $q^d-1$, where $d$ is the least common multiple of the $d_i$. In particular
the order of $\sigma$ is relatively prime to the characteristic $p$ of $\F_q$.
\end{lemma}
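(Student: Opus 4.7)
My plan is to reduce to the case of a single irreducible factor via the primary decomposition, and then identify each piece with a finite field extension so that $\sigma$ becomes multiplication by a nonzero scalar whose order divides the multiplicative group's order.

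First, I would apply Lemma \ref{primary_decomposition} to decompose $W=W_1\oplus\cdots\oplus W_t$, where $\sigma$ preserves each $W_i$ and acts there with minimal polynomial $p_i(x)$ (the exponents $r_i$ are all $1$ by hypothesis). Since $\sigma$ is cyclic on $W$, its minimal and characteristic polynomials agree, which forces $\dim W_i=d_i$. Next, I would use the standard identification of an irreducible cyclic block with a residue field: the $\F_q[x]$-module $W_i$, on which $x$ acts as $\sigma$, is isomorphic to $\F_q[x]/(p_i(x))$, and the latter is a field isomorphic to $\F_{q^{d_i}}$. Under this identification, $\sigma$ becomes multiplication by the image $\bar{x}$ of $x$; because $p_i(x)\neq x$, this image is nonzero, hence lies in $\F_{q^{d_i}}^\times$.

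Consequently, the order of $\sigma$ restricted to $W_i$ divides $|\F_{q^{d_i}}^\times|=q^{d_i}-1$. The order of $\sigma$ on all of $W$ is the least common multiple of its orders on the $W_i$, and so divides $\mathrm{lcm}(q^{d_1}-1,\ldots,q^{d_t}-1)$. Since $d_i\mid d$ implies $q^{d_i}-1\mid q^d-1$ for each $i$, this least common multiple divides $q^d-1$, establishing the first claim. The final statement follows since $q^d-1$ is coprime to $p$.

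I do not expect a real obstacle here; the only care needed is to justify the finite-field identification and to note that the hypothesis $p_i(x)\neq x$ is exactly what guarantees $\sigma$ is invertible on each block (so that one is really working inside the unit group). The statement of the lemma assumes $\sigma$ is cyclic with squarefree minimal polynomial, which makes the primary decomposition especially clean and avoids having to analyze orders of unipotent-type pieces.
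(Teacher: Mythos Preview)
Your argument is correct and complete: the primary decomposition together with the identification $W_i\cong\F_q[x]/(p_i(x))\cong\F_{q^{d_i}}$ makes $\sigma|_{W_i}$ multiplication by a nonzero field element, whence its order divides $q^{d_i}-1$, and the divisibility $q^{d_i}-1\mid q^d-1$ finishes it.

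The paper does not actually spell out a proof; it remarks that the result ``can easily be proved from first principles'' and alternatively cites Theorem~3.11 and Lemma~8.26 of Lidl--Niederreiter, noting that a cyclic transformation may be represented by the companion matrix of its characteristic polynomial. Your write-up is precisely the first-principles route the paper alludes to. The Lidl--Niederreiter route packages the same content differently, via the notion of the \emph{order} of a polynomial $f$ (the least $e$ with $f(x)\mid x^e-1$) and the fact that the companion matrix of $f$ has multiplicative order equal to this $e$; it yields the same bound $q^d-1$ but hides the finite-field identification inside the cited lemmas. Your direct module-theoretic argument is more transparent and self-contained, at no extra cost.
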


\begin{proof}
This can easily be proved from first principles. We may also use Theorem 3.11 and Lemma 8.26
of \cite{LN} (note that the matrix of a cyclic linear transformation may be taken
to be the companion matrix of its characteristic polynomial).
\end{proof}

We consider further refinements of these ideas, which again are well known in the theory
of finite groups and of algebraic groups. Let $\sigma$ be an invertible linear transformation
of the finite dimensional $\F_q$-vector space $W$. Let $m(x)$ be the minimal polynomial of
$\sigma$ and write
\[
m(x)=(x-1)^rm_2(x),
\]
where $x-1$ is relatively prime to $m_2$. Suppose that $m_2$ is square-free. Let 
\[
W=W_1\oplus W_2
\]
be the corresponding of $W$ into $\sigma$-invariant subspaces $W_1$ and $W_2$. Let $\sigma_i$
be restriction of $\sigma$ to $W_i$ for each $i$. Extend $\sigma_1$ to act on the whole 
of $W$ by defining the extension to act trivially on $W_2$, and extend
$\sigma_2$ by defining the extension to act trivially on $W_1$.

Retaining this notation, we have the following result. 

\begin{lemma} \label{extension_lemma}
We have the commuting factorization
\[
\sigma=\sigma_1\sigma_2=\sigma_2\sigma_1.
\]
The order of $\sigma_1$ is a power of $p$ and the order of $\sigma_2$ is relatively prime to
$p$. Each of $\sigma_1$ and $\sigma_2$ is a power of $\sigma$.
\end{lemma}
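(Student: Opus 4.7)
The plan is to recognize this as a characteristic-$p$ analogue of the Jordan--Chevalley decomposition, with $\sigma_1$ playing the role of the unipotent part and $\sigma_2$ that of the semisimple part. The commuting factorization is immediate: $\sigma_1$ and $\sigma_2$ are defined block-diagonally with respect to $W=W_1\oplus W_2$, so they commute; on $W_1$ one has $\sigma_2=\mathrm{id}$ and $\sigma_1=\sigma|_{W_1}$, hence $\sigma_1\sigma_2=\sigma$ on $W_1$, and symmetrically on $W_2$. Thus $\sigma=\sigma_1\sigma_2=\sigma_2\sigma_1$.

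Next I would settle the orders. For $\sigma_1$: the minimal polynomial of $\sigma$ on $W_1$ is $(x-1)^r$, so $(\sigma_1-1)^r=0$ on $W_1$, and also on $W_2$ since $\sigma_1$ acts there as the identity. Choosing $s$ with $p^s\geq r$, the characteristic-$p$ identity $(1+N)^{p^s}=1+N^{p^s}$ gives $\sigma_1^{p^s}=1$, so $|\sigma_1|$ is a power of $p$. For $\sigma_2$: its minimal polynomial on all of $W$ is $\mathrm{lcm}(x-1,m_2(x))=(x-1)m_2(x)$, which is square-free by hypothesis and has no factor $x$ since $\sigma_2$ is invertible (identity on $W_1$, restriction of $\sigma$ on $W_2$). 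I cannot directly invoke Lemma \ref{order_prime_to_p} because that statement assumes cyclicity, which I do not have; instead, I apply Lemma \ref{primary_decomposition} to $\sigma_2$ and observe that on each primary component the minimal polynomial is a single monic irreducible $p_i$ of some degree $d_i$, on which $\sigma_2$ is annihilated by $p_i$ and therefore has order dividing $q^{d_i}-1$. Taking the least common multiple, $|\sigma_2|$ divides $q^d-1$ for $d=\mathrm{lcm}(d_i)$, and in particular is coprime to $p$.

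Finally, to see that each $\sigma_i$ is a power of $\sigma$, let $M=|\sigma_1|$ and $N=|\sigma_2|$. By the previous paragraph $\gcd(M,N)=1$, so the Chinese Remainder Theorem yields an integer $m$ with $m\equiv 1\pmod M$ and $m\equiv 0\pmod N$; then, using commutativity, $\sigma^m=\sigma_1^m\sigma_2^m=\sigma_1\cdot 1=\sigma_1$, and a symmetric choice of exponent recovers $\sigma_2$ as a power of $\sigma$. The only step requiring real care is the order estimate for $\sigma_2$, where the absence of cyclicity forces me to bypass Lemma \ref{order_prime_to_p} and argue instead via the primary decomposition; everything else is formal.
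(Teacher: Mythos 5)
Your proof is correct and follows essentially the same route as the paper's: block-diagonal definitions give the commuting factorization, the identity $(\sigma_1-I)^{p^a}=\sigma_1^{p^a}-I$ handles the $p$-part, a bound of the form $q^d-1$ handles the order of $\sigma_2$, and a B\'ezout/CRT exponent recovers each factor as a power of $\sigma$. The one place you diverge, treating the order of $\sigma_2$ via Lemma \ref{primary_decomposition} on each primary component rather than citing Lemma \ref{order_prime_to_p}, is a legitimate refinement rather than a different argument: the paper invokes Lemma \ref{order_prime_to_p} even though its cyclicity hypothesis is not part of the setup of Lemma \ref{extension_lemma} (cyclicity does hold in the paper's intended applications, and the conclusion is true without it, as your component-wise argument shows), so your detour closes a small gap in the citation while leaving the proof otherwise unchanged.
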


\begin{proof}
It is clear from the definition that the extensions commute and $\sigma$ is their product. It is
similarly clear from 
its definition that $\sigma_1$ has minimal polynomial $(x-1)^r$. Thus
$(\sigma_1-I)^r=0$. Let $p^a$ be a power of $p$ that satisfies $p^a\geq r$. Then we have
\[
(\sigma_1-I)^{p^a}=(\sigma_1-I)^{r}(\sigma_1-I)^{p^a-r}=0.
\]
But as we are working in characteristic $p$, 
\[
0=(\sigma_1-I)^{p^a}=\sigma_1^{p^a}-I
\]
and thus we see that $\sigma_1$ has order dividing $p^a$.

Lemma \ref{order_prime_to_p} shows that $\sigma_2$ has order dividing $q^d-1$ for some positive integer $d$. Now since $p^a$ and $q^d-1$ are relatively prime, we can find integers
$\alpha$ and $\beta$ such that 
\[
\alpha p^a+\beta(q^d-1)=1.
\]
We then deduce that
\[
\sigma_1=\sigma_1^{\beta(q^d-1)},\quad \sigma_2=\sigma_2^{\alpha p^a}.
\]

Since $\sigma_1$ and $\sigma_2$ commute, we obtain 
\[
\sigma^{\alpha p^a}=\sigma_1^{\alpha p^a}\sigma_2^{\alpha p^a}=\sigma_2
\]
and likewise
\[
\sigma^{\beta (q^d-1)}=\sigma_1^{\beta (q^d-1)}\sigma_2^{\beta (q^d-1)}=\sigma_1.
\]
Thus we have shown that $\sigma_1$ and $\sigma_2$ are powers of $\sigma$, as required.
\end{proof}

We call the element $\sigma_1$ the unipotent or $p$-part of $\sigma$ and
$\sigma_2$ the semisimple or $p$-regular part of $\sigma$.

\subsection{Factorization of polynomials over function fields}

\noindent The following results are  well known, but for want of a convenient reference
source, we give a proof here. 

\begin{lemma} \label{number_of_irreducible_factors}
Let $K$ be a field and let $P(x,t)$ be a monic polynomial in $K(t)[x]$, whose coefficients
are elements of $K[t]$. Write $P(x,t)$ in the form
\[
\sum_{i=0}^r p_i(x)t^i,
\]
where the $p_i(x)$ are polynomials in $x$ and $p_r(x)\neq 0$. Assume that the $p_i(x)$
are relatively prime. Then, considered as a polynomial in $K(t)[x]$,
$P(x,t)$ has at most $r$ irreducible factors.
\end{lemma}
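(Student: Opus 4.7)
The plan is to reduce the question to a factorization statement in the polynomial ring $K[x,t]$, and then compare the $t$-degree of $P$ with the $t$-degrees of the irreducible factors.

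First, observe that $P(x,t) \in K[t][x]$ is monic in $x$, so its content as an element of $K[t][x]$ (i.e., the gcd in $K[t]$ of its coefficients in $x$) is $1$. By Gauss's lemma for the UFD $K[t]$, any factorization
\[
P = F_1 \cdots F_s
\]
in $K(t)[x]$ into $s$ monic irreducible factors can be rewritten so that each $F_i$ lies in $K[t][x] = K[x,t]$ and is monic in $x$ of positive $x$-degree. Our goal is then to show $s \leq r$.

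Next I switch viewpoints and regard $P$ and the $F_i$ as elements of $K[x][t]$. Writing $d_i = \deg_t F_i$, the hypothesis that $p_r(x) \neq 0$ says $\deg_t P = r$, and since $K[x]$ is an integral domain, degrees in $t$ are additive under multiplication:
\[
\sum_{i=1}^s d_i = \deg_t P = r.
\]
Thus it suffices to prove that $d_i \geq 1$ for every $i$, i.e.\ that no irreducible factor of $P$ in $K(t)[x]$ can be chosen to lie entirely in $K[x]$.

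Suppose for contradiction that some $F_i$ has $d_i = 0$, so $F_i = f(x) \in K[x]$ with $\deg f \geq 1$. Writing the cofactor as $Q(x,t) = \sum_j q_j(x) t^j \in K[x][t]$, the identity $f(x) \cdot Q(x,t) = P(x,t)$ forces $f(x) \mid p_j(x)$ in $K[x]$ for every $j = 0, 1, \ldots, r$. This contradicts the hypothesis that $p_0, \ldots, p_r$ are relatively prime, since $f$ is a non-constant common divisor. Hence every $d_i \geq 1$, and $s \leq r$ as required.

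The only point that requires care is the opening application of Gauss's lemma together with keeping straight the two roles of $K[x,t]$, namely $K[t][x]$ (in which $P$ is monic, so factorization over $K(t)[x]$ descends) versus $K[x][t]$ (in which the relevant degree is $r$). Once those two viewpoints are lined up, the coprimality hypothesis on the $p_i$ does all the remaining work.
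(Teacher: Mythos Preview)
Your proof is correct and follows essentially the same approach as the paper: descend the factorization to $K[x,t]$ via Gauss's Lemma, switch to the viewpoint $K[x][t]$, use the coprimality of the $p_i$ to rule out factors lying in $K[x]$, and then bound the number of factors by the $t$-degree $r$. Your version is slightly more explicit about degree additivity, but the argument is the same.
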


\begin{proof}
We make use of the equality $K[t][x]=K[x][t]$, in other words, polynomials in $x$ with
coefficients that are polynomials in $t$ are also polynomials in $t$ with coefficients
that are polynomials in $x$. Consider a factorization of $P$ into irreducible factors
over the field $K(t)$, say 
\[
P=f_1(x,t)\cdots f_d(x,t),
\]
where the $f_i$ are non-constant polynomials in $x$ with coefficients in $K(t)$. Since $K[t]$ is a Dedekind
domain with quotient field $K(t)$, Gauss's Lemma implies that, as the coefficients of
$P(x,t)$, considered as a polynomial in $x$, are elements of $K[t]$, the same is true of the factors $f_i(x,t)$. 

We claim that no $f_i(x,t)$ is a polynomial in $x$ with coefficients in $K$. For if
say $f_i(x,t)=g(x)\in K[x]$, then $g(x)$ divides all the coefficient polynomials $p_i(x)$,
contrary to the relatively prime hypothesis. Thus each $f_i(x,t)$ involves a coefficient
that is a nonconstant polynomial in $t$.

Now since we have
\[
P(x,t)=
\sum_{i=0}^r p_i(x)t^i,
\]
where $p_r(x)\neq 0$, $P$ has degree $r$ when considered as a polynomial in $t$. 
Thus,
when $P$ is factored over $K(x)$, it has at most $r$ nonconstant factors. But since
$P(x,t)=f_1(x,t)\ldots f_d(x,t)$ is a factorization of $P$ into $d$ nonconstant
elements in $K(x)[t]$, it follows that $d\leq r$, as required.
\end{proof}

The following special case of the previous lemma is especially well known, and we make good use of it later.

\begin{cor} \label{irreducibility_criterion}
Let $K$ be a field and let $f$ and $g$ be relatively prime polynomials in $K[x]$. Then 
$f(x)+tg(x)$ is irreducible over the function field $K(t)$.
\end{cor}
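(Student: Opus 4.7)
The plan is to apply Lemma \ref{number_of_irreducible_factors} with $r = 1$, taking the coefficient polynomials (in $t$) to be $p_0(x) = f(x)$ and $p_1(x) = g(x)$. By hypothesis these are relatively prime, so the lemma gives that $f(x) + t g(x)$ has at most one irreducible factor in $K(t)[x]$, hence is irreducible. The degenerate case $g = 0$ is vacuous, since coprimality then forces $f \in K^{\ast}$.

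The only technical point is that Lemma \ref{number_of_irreducible_factors} is stated for $P(x,t)$ that are monic in $x$, whereas $f + t g$ need not be monic when $\deg_x g \geq \deg_x f$. I expect this to be the main (mild) obstacle. I would handle it by revisiting the Gauss's Lemma step from the preceding proof directly: any factorization $f + t g = A B$ in $K(t)[x]$ can, after rescaling $A$ and $B$ by suitable elements of $K(t)^{\ast}$, be taken to lie in $K[t][x]$ with $A B = c \cdot (f + t g)$ for some $c \in K^{\ast}$. Comparing $t$-degrees gives $\deg_t A + \deg_t B = 1$, so one factor, say $B$, belongs to $K[x]$. Writing $A = A_0(x) + A_1(x)\, t$ and matching coefficients of $t^0$ and $t^1$ on both sides yields $A_0 B = c f$ and $A_1 B = c g$, so $B$ divides both $f$ and $g$ in $K[x]$; coprimality then forces $B \in K^{\ast}$ and the factorization is trivial.

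To make the rescaling step rigorous one must check that $f + t g$ is primitive in $K[t][x]$; otherwise the normalizing scalar $c$ need not lie in $K^{\ast}$. If a non-unit $d(t) \in K[t]$ were to divide each $K[t]$-coefficient $a_i + t b_i$ of $f + t g$ (writing $f = \sum a_i x^i$, $g = \sum b_i x^i$), then since these coefficients have $t$-degree at most one, $d(t)$ would be a $K^{\ast}$-multiple of some fixed $a_j + t b_j$, forcing proportionalities $(a_i, b_i) = c_i (a_j, b_j)$ for scalars $c_i \in K$. The polynomial $\sum_i c_i x^i$ is then a non-unit common factor of $f$ and $g$ in $K[x]$, contradicting $\gcd(f, g) = 1$. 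This is the only place where the coprimality hypothesis is used in a substantive way, and it is precisely what underwrites the whole argument.
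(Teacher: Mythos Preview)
Your approach matches the paper's, which offers no separate proof and simply calls the corollary a ``special case'' of Lemma~\ref{number_of_irreducible_factors} (take $r=1$, $p_0=f$, $p_1=g$). You have gone further and correctly identified and repaired a detail the paper glosses over---the lemma is stated only for $P$ monic in $x$---by checking primitivity of $f+tg$ in $K[t][x]$ and rerunning the Gauss's Lemma step directly; your argument is sound.
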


\subsection{Construction of self-reciprocal polynomials}

\noindent When we investigate properties of classical linear groups, we encounter  so-called self-reciprocal polynomials. We devote this section to describing their properties
and constructions.

Given a monic polynomial $g$ of degree $d$ in $\F_q[x]$ with $g(0)\neq 0$, we set
\[
g^*(x)=x^dg(0)^{-1}g(x^{-1}).
\]
We note that $g^*$ is also monic of degree $d$ and it is irreducible if
$g$ is itself irreducible. We say that $g$ is self-reciprocal
if $g=g^*$. We always assume that $g(0)\neq 0$ when discussing self-reciprocal polynomials.

Let $g$ be a monic self-reciprocal polynomial of even degree $2m$ in $\F_q[x]$. Write
\[
g(x)=\sum_{i=0}^{2m}a_ix^i,
\]
where $a_{2m}=1$ and $a_0=g(0)\neq 0$. Then it is straightforward to see that $g(0)^2=1$. 

If $g(0)=1$, then $a_{m-i}=a_i$ for $0\leq i\leq m$. This situation always obtains
in characteristic 2. Suppose that $q$ is odd and $g(0)=-1$. Then we have
\[
a_{m-i}=-a_i
\]
for $0\leq i\leq 2m$. Furthermore, since $g^*(x)=g(x)$, 
\[
g(x)=-x^{2m}g(x^{-1}).
\]
Then if we set $x=\pm 1$, we see that $g(1)=-g(1)$ and $g(-1)=-g(-1)$. Thus both
1 and $-1$ are roots of $g$.

Now let $f(x)$ be a monic polynomial
of degree $m\geq 1$ in $\F_q[x]$. Then we easily verify that $h(x)$ defined by
\[
h(x)=x^mf(x+x^{-1})
\]
is a monic self-reciprocal polynomial of degree $2m$ with $h(0)=1$. Conversely, suppose that $g(x)$ is a self-reciprocal polynomial polynomial of even degree $2m$ in $\F_q[x]$
with $g(0)=1$. Then we can write
\[
x^{-m}g(x)=\sum_{i=1}^m a_i(x^i+x^{-i}),
\]
where the coefficients $a_i$ are in $\F_q$ and $a_m=1$. 

We claim that 
\[
\sum_{i=1}^m a_i(x^i+x^{-i})
\]
is expressible as $P(x+x^{-1})$ for some (in fact unique) choice of polynomial $P$ of degree $m$. Proceeding 
by induction, it suffices to prove that $x^{m}+x^{-m}$ is expressible in this form.

Now we have
\[
x^m+x^{-m}=(x^{m-1}+x^{-(m-1)})(x+x^{-1})-(x^{m-2}+x^{-(m-2)}).
\]
By induction, $x^{m-1}+x^{-(m-1)}$ and $x^{m-2}+x^{-(m-2)}$ are polynomials in $x+x^{-1}$, and it follows from the equality above that the same holds for $x^m+x^{-m}$. This proves our claim.

Note that this result does not hold if $q$ is odd and $g(0)=-1$. 

We record next some elementary facts about roots and their multiplicities. With regard to
definitions, we note that a root of multiplicity one is also called a simple root, whereas one of multiplicity greater than one is called a multiple root. Furthermore, since finite fields
are perfect, a polynomial in $\F_q[x]$ has no repeated (irreducible) factors if and only all its roots in some splitting field over $\F_q$  are simple.

The following criterion for multiple roots, based on derivatives, is well known and we omit the proof.

\begin{lemma} \label{multiple_roots}
Let $q$ be any prime power.
Let $f$ be a nonconstant polynomial in $\F_q[x]$ and let $\alpha$ be a root of $f$ in some
extension field of $\F_q$. Then $\alpha$ is a multiple root of $f$ if and only if
$f(\alpha)=f'(\alpha)=0$, where $f'$ denotes the derivative of $f$. If $\alpha$ has multiplicity
at least three, $f(\alpha)=f'(\alpha)=f''(\alpha)=0$.
\end{lemma}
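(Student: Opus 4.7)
The plan is to factor $f$ according to the multiplicity of $\alpha$ and differentiate. Working in a splitting field, write $f(x)=(x-\alpha)^k g(x)$, where $k\geq 1$ is the multiplicity of $\alpha$ and $g(\alpha)\neq 0$. This factorization is unique (up to units) and exists because $(x-\alpha)$ and any polynomial not vanishing at $\alpha$ are coprime in the polynomial ring over the splitting field.

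For the forward direction, the product rule gives
\[
f'(x)=k(x-\alpha)^{k-1}g(x)+(x-\alpha)^k g'(x).
\]
If $k\geq 2$, every term on the right is divisible by $(x-\alpha)$, so $f'(\alpha)=0$. For the converse, suppose $f(\alpha)=f'(\alpha)=0$. Then $k\geq 1$, so write $f(x)=(x-\alpha)h(x)$, whence $f'(x)=h(x)+(x-\alpha)h'(x)$ and $f'(\alpha)=h(\alpha)$. The hypothesis forces $h(\alpha)=0$, so $(x-\alpha)$ divides $h(x)$, i.e.\ $(x-\alpha)^2$ divides $f(x)$ and $\alpha$ is a multiple root.

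For the final claim, suppose $k\geq 3$ and differentiate once more:
\[
f''(x)=k(k-1)(x-\alpha)^{k-2}g(x)+2k(x-\alpha)^{k-1}g'(x)+(x-\alpha)^k g''(x).
\]
Every term is divisible by $(x-\alpha)^{k-2}$, and $k-2\geq 1$, so $f''(\alpha)=0$; the vanishing of $f(\alpha)$ and $f'(\alpha)$ has already been established.

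The only subtlety worth flagging is characteristic: in characteristic $p$, the integer coefficient $k$ (or $k(k-1)$) may be zero in $\F_q$, which would a priori make certain terms disappear. However, this only helps the forward implications — the conclusion $f'(\alpha)=0$ or $f''(\alpha)=0$ still holds because every surviving term already carries a positive power of $(x-\alpha)$. The converse direction uses only the product rule on $f(x)=(x-\alpha)h(x)$, where the constant coefficient is $1$, so no characteristic issue arises. Thus there is no real obstacle; the result is a direct application of the product rule combined with unique factorization in a polynomial ring over a field.
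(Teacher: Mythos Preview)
Your proof is correct and is the standard argument via the product rule; the paper in fact omits the proof entirely, stating only that the criterion is well known.
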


We proceed to investigate factors of self-reciprocal polynomials.

\begin{lemma} \label{characteristic_2_self_reciprocal}
Let $q$ be even and let $f$ be a monic polynomial in $\F_q[x]$ of degree $m\geq 1$.
Let $h(x)=x^mf(x+x^{-1})$. Then $1$ is a root of $h$ if and only if $f(0)=0$ and this root of $h$
has multiplicity two if and only $0$ has multiplicity one as a root of $f$. All  roots
of $h$ different from $1$ are simple if and only if all  roots of $f$ different from $0$ are simple.
\end{lemma}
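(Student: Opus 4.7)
The plan is to exploit the characteristic-$2$ identity
\[
x + x^{-1} = \frac{(x-1)^2}{x},
\]
which follows from $(x-1)^2 = x^2 + 1$. The first assertion is then immediate, since $h(1) = 1^{m} f(1+1) = f(0)$.

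For the second assertion, I would write $f(y) = \sum_{i=0}^m c_i y^i$, noting that $0$ is a root of $f$ of multiplicity exactly $k$ iff $c_0 = \cdots = c_{k-1} = 0$ and $c_k \neq 0$. Substituting the identity gives
\[
h(x) = x^m f\bigl((x-1)^2/x\bigr) = \sum_{i=0}^m c_i\, x^{m-i}(x-1)^{2i},
\]
and with $y = x - 1$,
\[
h(y+1) = \sum_{i=0}^m c_i (y+1)^{m-i} y^{2i}.
\]
The constant term in $y$ is $c_0$, the $y$-coefficient is $0$, and the $y^2$-coefficient (obtained solely from the $i=1$ summand at $y=0$) equals $c_1$. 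Hence the order of vanishing of $h$ at $x = 1$ equals that of $h(y+1)$ at $y = 0$, which is exactly $2$ iff $c_0 = 0$ and $c_1 \neq 0$, i.e., iff $0$ is a simple root of $f$.

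For the third assertion, I would apply the derivative criterion of Lemma \ref{multiple_roots}. Differentiating $h(x) = x^m f(x+x^{-1})$ yields
\[
h'(x) = m x^{m-1} f(x+x^{-1}) + x^m(1 + x^{-2})\, f'(x+x^{-1}),
\]
recalling that $-1 = 1$ in characteristic $2$. Given a root $\alpha$ of $h$ with $\alpha \neq 1$, set $\beta = \alpha + \alpha^{-1}$; then $f(\beta) = 0$, and $\beta \neq 0$ because $\beta = 0$ would force $\alpha^2 = 1$, hence $\alpha = 1$ in characteristic $2$. The first summand of $h'(\alpha)$ vanishes, and $1 + \alpha^{-2}$ is nonzero for the same reason, so $h'(\alpha) = 0$ iff $f'(\beta) = 0$. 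Since each nonzero root $\beta$ of $f$ produces exactly two distinct roots $\alpha, \alpha^{-1}$ of $h$ (the roots of $x^2 + \beta x + 1$, which are distinct in characteristic $2$ precisely because $\beta \neq 0$), and every root of $h$ other than $1$ arises in this way, the biconditional follows.

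The main obstacle I anticipate is in assertion (2): the derivative test of Lemma \ref{multiple_roots} only distinguishes simple from multiple roots in general, and in characteristic $2$ it does not cleanly separate multiplicity exactly $2$ from multiplicity $\geq 3$. The $(x-1)^2/x$ identity circumvents this entirely by exhibiting $h(y+1)$ as an explicit polynomial in $y$ whose leading term can be read off directly from the coefficients of $f$.
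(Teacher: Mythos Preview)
Your proof is correct and follows essentially the same approach as the paper: both use the characteristic-$2$ identity $x + x^{-1} = (x-1)^2/x$ to handle the behaviour at $1$, and both use the identical derivative computation $h'(x) = m x^{m-1} f(x+x^{-1}) + x^m(1+x^{-2}) f'(x+x^{-1})$ for the roots away from $1$; your treatment of the biconditional in assertion (3) is in fact more explicit than the paper's. One small expositional slip: the claims that ``the $y$-coefficient is $0$'' and that the $y^2$-coefficient is ``obtained solely from the $i=1$ summand'' are only valid once $c_0 = 0$ (the $i=0$ term contributes $c_0 m$ and $c_0\binom{m}{2}$ to those coefficients), but since assertion (2) presupposes that $1$ is a root, i.e.\ $c_0 = 0$, your conclusion is unaffected.
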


\begin{proof}
Since $q$ is a power of 2, 
\[
x+x^{-1}=(x+1)^2/x
\]
and thus 
\[
h(x)=x^mf((x+1)^2/x).
\]
This implies that 1 is a root of $h$ if and only if 0 is a root of $f$. Moreover, if 1 is a root
of $h$, its multiplicity is at least two, and the multiplicity is exactly two if and only
if the multiplicity of 0 as a root of $f$ is one (in other words, $x^2$ does not divide $f$).

Now let $\alpha$ be any root of $h$ different from 1. Then we see that $\alpha+\alpha^{-1}$ is
a root of $f$. Suppose that $\alpha$ is a multiple root of $h$. Then we have
$h(\alpha)=h'(\alpha)=0$. We calculate that
\[
h'(x)=mx^{m-1}f(x+x^{-1})+x^m(1+x^{-2})f'(x+x^{-1})
\]
and hence 
\[
h'(\alpha)=0=\alpha^m(1+\alpha^{-2})f'(\alpha+\alpha^{-1}).
\]
Now since $\alpha\neq 1$, we also have $1+\alpha^{-2}\neq 0$, and we deduce that
$\alpha+\alpha^{-1}$ is a multiple root of $f$. This proves what we want.
\end{proof}

\begin{cor} \label{number_of_selfreciprocal_in_char_2}
Let $q$ be even. Let $\Omega_{2m}$ be the set of 
monic self-reciprocal polynomials $h(x)$ of even degree $2m$
in $\F_q[x]$, with the property that
 $1$ is a root of $h$ with multiplicity two
and all other roots of $h$ are simple. Then $|\Omega_{2m}|$ equals
the number of monic polynomials $f$ of degree $m$ in $\F_q[x]$ with $f(0)=0$ and no repeated irreducible
factors.
\end{cor}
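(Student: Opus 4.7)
The plan is to exhibit a bijection $f \mapsto h$, with $h(x) = x^m f(x+x^{-1})$, between monic polynomials of degree $m$ in $\F_q[x]$ and monic self-reciprocal polynomials of degree $2m$ in $\F_q[x]$, then use Lemma \ref{characteristic_2_self_reciprocal} to translate the two defining conditions of $\Omega_{2m}$ into conditions on $f$.

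First I would verify the bijection. The forward direction is exactly the calculation carried out in the paragraph preceding Lemma \ref{characteristic_2_self_reciprocal}: $h(x)=x^m f(x+x^{-1})$ is monic, self-reciprocal, of degree $2m$, and satisfies $h(0)=1$. For surjectivity, observe that in characteristic $2$ any monic self-reciprocal $g$ of degree $2m$ has $g(0)^2=1$, hence $g(0)=1$. The discussion preceding the lemma then produces a unique monic $P$ of degree $m$ with $g(x)=x^m P(x+x^{-1})$, which gives both surjectivity and injectivity.

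Next I would translate the conditions through Lemma \ref{characteristic_2_self_reciprocal}. Membership in $\Omega_{2m}$ means (a) $1$ is a root of $h$ with multiplicity exactly two, and (b) every other root of $h$ is simple. By the lemma, (a) is equivalent to $f(0)=0$ together with $0$ being a root of $f$ of multiplicity exactly one (that is, $x\mid f$ but $x^2\nmid f$), while (b) is equivalent to all roots of $f$ different from $0$ being simple.

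Combining these, the bijection restricts to one between $\Omega_{2m}$ and the monic polynomials $f$ of degree $m$ with $f(0)=0$ for which every root of $f$ in a splitting field is simple. Since $\F_q$ is perfect, simplicity of all roots is equivalent to $f$ having no repeated irreducible factors, as noted just before Lemma \ref{multiple_roots}. Counting gives the claimed equality. There is no genuine obstacle here; the only detail requiring care is the appeal to $g(0)=1$ for surjectivity, which is precisely where the characteristic $2$ hypothesis enters.
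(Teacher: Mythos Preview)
Your proposal is correct and follows exactly the approach the paper intends: the corollary is stated without proof precisely because it drops out of the bijection $f\mapsto x^mf(x+x^{-1})$ established in the discussion before Lemma~\ref{characteristic_2_self_reciprocal} together with the equivalences in that lemma, and you have spelled this out carefully. Your remark that the hypothesis $q$ even enters only to force $g(0)=1$ (so that the bijection is onto all monic self-reciprocal polynomials of degree $2m$) is exactly the point.
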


It is possible to evaluate $|\Omega_{2m}|$ when $q$ is a power of two, on the basis
of Corollary \ref{number_of_selfreciprocal_in_char_2}. We simply state
the relevant result without proof (the calculations hold for all $q$ but Corollary \ref{number_of_selfreciprocal_in_char_2}
only holds when $q$ is even).

\begin{thm} \label{number_of_square_free_polynomials}
Let $q$ be any prime power. Let $Z_n$ be the number of monic  polynomials $f$ of degree 
$n$ in $\F_q[x]$ with $f(0)=0$ and no repeated irreducible factors. Then  
\[
Z_n=\frac{(q^{n-1}-1)(q-1)}{q+1}
\]
when $n\geq 3$ is odd, and 
\[
Z_n=\frac{(q^{n-1}+1)(q-1)}{q+1}
\]
if $n\geq 2$ is even. $Z_1=1$.
\end{thm}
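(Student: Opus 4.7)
The plan is to reduce the count to a simpler count of squarefree monic polynomials with nonzero constant term, set up a two-term recurrence relating consecutive degrees, and then solve it in closed form.

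First I would observe that, since finite fields are perfect, $f$ having no repeated irreducible factors is the same as $f$ being squarefree. The condition $f(0)=0$ means exactly that $x\mid f$, so write $f=xg$ with $g$ monic of degree $n-1$. Squarefreeness of $f$ then translates to: $g$ is squarefree and $g(0)\ne 0$. Thus, if I let
\[
A_k=\#\{g\in\F_q[x]\colon g \text{ monic, squarefree, }\deg g=k,\; g(0)\ne 0\},
\]
then $Z_n=A_{n-1}$ for $n\ge 1$, with the convention $A_0=1$.

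Next I would use the standard count
\[
S_k := \#\{g\in\F_q[x]\colon g\text{ monic, squarefree, }\deg g=k\}=q^k-q^{k-1}\quad(k\ge 2),\qquad S_1=q,
\]
which follows from the bijection $f\leftrightarrow(g,h)$ with $f=gh^2$, $g$ squarefree (equivalently, from the zeta-function identity $\zeta_{\F_q[x]}(s)/\zeta_{\F_q[x]}(2s)$). Sorting squarefree monic polynomials of degree $k$ according to whether $x$ divides them gives
\[
S_k=A_k+A_{k-1}\qquad(k\ge 2),
\]
since the squarefree monic polynomials of degree $k$ divisible by $x$ are exactly $x\cdot h$ for $h$ squarefree monic of degree $k-1$ with $h(0)\ne 0$. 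Together with the initial value $A_1=q-1$ (the polynomials $x-a$ with $a\ne 0$), this recurrence determines $A_k$.

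To solve it, set $A_k=(q-1)c_k$; the recurrence becomes $c_k+c_{k-1}=q^{k-1}$ with $c_1=1$, whose solution is the telescoping geometric sum
\[
c_k=\sum_{i=0}^{k-1}(-1)^i q^{k-1-i}=\frac{q^k-(-1)^k}{q+1}.
\]
Hence $A_k=(q-1)(q^k-(-1)^k)/(q+1)$, and substituting $k=n-1$ gives
\[
Z_n=\frac{(q-1)\bigl(q^{n-1}+(-1)^n\bigr)}{q+1},
\]
which splits into the two cases in the statement, and $Z_1=1$ is immediate. The only mildly delicate point is to keep the sign conventions straight and to handle the small-degree initial conditions so that the closed-form expression is actually valid from $n=2$ (even) and $n=3$ (odd) onwards; there is no serious obstacle beyond this book-keeping, since both the count $S_k=q^k-q^{k-1}$ and the reduction $f=xg$ are elementary.
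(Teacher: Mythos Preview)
Your proposal is correct. The paper actually states this theorem without proof (it says ``We simply state the relevant result without proof''), so there is no approach in the paper to compare against; your argument fills in exactly what the paper omits. The reduction $Z_n=A_{n-1}$ via $f=xg$, the recurrence $A_k+A_{k-1}=S_k=q^k-q^{k-1}$ for $k\ge 2$ coming from the standard squarefree count, and the closed-form solution $A_k=(q-1)(q^k-(-1)^k)/(q+1)$ are all correct and cleanly executed; the only small-degree exception, $Z_1=A_0=1$, you handle separately as required.
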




We turn to providing an analogue of Lemma \ref{characteristic_2_self_reciprocal} in odd characteristic.

\begin{lemma} \label{odd_characteristic_self_reciprocal}
Let $q$ be a power of an odd prime and let $f$ be a monic polynomial in $\F_q[x]$ of degree $m\geq 1$. Suppose that $2$ is a root of $f$, $f(-2)\neq 0$, and all roots of
$f$ are simple.
Let $h(x)=x^mf(x+x^{-1})$. Then $1$ is a root of $h$ of multiplicity two, $-1$ is not a root
of $h$,  and all other roots of $h$ have multiplicity one, 
\end{lemma}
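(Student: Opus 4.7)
The plan is to mirror the proof of Lemma \ref{characteristic_2_self_reciprocal}, but now we are in odd characteristic so we must work with the derivative identity $(x+x^{-1})' = 1-x^{-2}$ instead of relying on the characteristic-$2$ simplification $x+x^{-1}=(x+1)^2/x$. The four claims to establish are: (i) $h(1)=0$ and this root has multiplicity exactly $2$; (ii) $h(-1)\neq 0$; (iii) every other root of $h$ is simple. All three are read off from low-order Taylor data of $h$ together with Lemma \ref{multiple_roots}.

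First I would compute $h(1) = 1^m f(1+1^{-1}) = f(2) = 0$ by hypothesis, so $1$ is a root. Differentiating,
\[
h'(x) = m x^{m-1} f(x+x^{-1}) + x^m(1-x^{-2}) f'(x+x^{-1}),
\]
and at $x=1$ both summands vanish (the first because $f(2)=0$, the second because $1-x^{-2}=0$ at $x=1$), so $h'(1)=0$ and the root at $1$ has multiplicity at least $2$. To rule out multiplicity $\geq 3$ I differentiate once more; after collecting terms and evaluating at $x=1$ (where $1-x^{-2}=0$ and $(1-x^{-2})'=2x^{-3}=2$), everything containing $f(2)$ or $1-x^{-2}$ as a factor drops out, leaving
\[
h''(1) = 2 f'(2).
\]
Since $2$ is a simple root of $f$ we have $f'(2)\neq 0$, and $2\neq 0$ in $\F_q$ because $q$ is odd, so $h''(1)\neq 0$. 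By Lemma \ref{multiple_roots}, $1$ has multiplicity exactly $2$.

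For the point $-1$, a direct substitution gives $h(-1) = (-1)^m f(-2)$, which is nonzero by hypothesis. Finally, for a root $\alpha$ of $h$ with $\alpha \neq \pm 1$, note that $\alpha+\alpha^{-1}$ is a root of $f$, and from the formula for $h'$ above
\[
h'(\alpha) = \alpha^m (1-\alpha^{-2}) f'(\alpha+\alpha^{-1}),
\]
where the term $m\alpha^{m-1} f(\alpha+\alpha^{-1})$ has vanished. Since $\alpha\neq \pm 1$ the factor $1-\alpha^{-2}$ is nonzero, so $h'(\alpha)=0$ would force $f'(\alpha+\alpha^{-1})=0$, contradicting the hypothesis that all roots of $f$ are simple. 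Hence $\alpha$ is a simple root of $h$.

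There is no serious obstacle: the only mild subtlety is keeping track of the second-derivative computation at $x=1$ and checking that every unwanted term is killed by either $f(2)=0$ or $1-x^{-2}|_{x=1}=0$, so that the surviving term $2f'(2)$ is genuinely nonzero. The oddness of $q$ is used precisely to ensure that this surviving factor of $2$ does not vanish.
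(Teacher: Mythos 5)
Your proof is correct and follows essentially the same route as the paper: the same derivative formula $h'(x)=mx^{m-1}f(x+x^{-1})+(x^m-x^{m-2})f'(x+x^{-1})$, the same computation $h''(1)=2f'(2)$ (nonzero since $2$ is a simple root of $f$ and $q$ is odd) to pin the multiplicity of $1$ at exactly two, and the same argument that any root $\alpha\neq\pm 1$ would force $f'(\alpha+\alpha^{-1})=0$ if it were multiple. No gaps.
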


\begin{proof}
It is clear that if $-1$ is a root of $h$, then $-2$ is a root of $f$, and since we are excluding
this possibility, we deduce that $-1$ is not a root of $h$. Let $\alpha$ be a root of $h$ different from 1. Then certainly
$\alpha+\alpha^{-1}$ is a root of $f$. Suppose if possible that $\alpha$ is a multiple root
of $h$. Then we have $h(\alpha)=h'(\alpha)=0$. 

We differentiate $h$ to find that
\[
h'(x)=mx^{m-1}f(x+x^{-1})+(x^m-x^{m-2})f'(x+x^{-1}),
\]
whence it follows that
\[
h'(\alpha)=0=(\alpha^m-\alpha^{m-2})f'(\alpha+\alpha^{-1}).
\]
Since we are assuming that $\alpha\neq 1$, and $-1$ is not a root, $\alpha^m-\alpha^{m-2}$
is not zero, and therefore $f'(\alpha+\alpha^{-1})=0$. This implies that
$\alpha+\alpha^{-1}$ is a double root of $f$, which we again excluded. Thus all
roots of $h$ other than 1 are simple. 

We turn to consideration of the multiplicity of 1 as a root of $h$. From our formula
for $h'$, we readily see that $h'(1)=0$ and deduce that 1 has multiplicity at least two
as a root of $h$. Differentiating $h'$ and setting $x$ equal to 1, we find that
\[
h''(1)=2f'(2).
\]
Thus $1$ has multiplicity at least three as a root of $h$ if and only if 2 has multiplicity
at least two as a root of $f$. Since we are assuming that all roots of $f$ are simple,
1 has multiplicity two as a root of $h$, as required.
\end{proof}

\section{The symplectic group}\label{symp_sect}

This section has two subsections. 
In the first subsection we present some structural results for the symplectic group.
In the second subsection we use these to prove our main result using degree 1
ideals to realize certain Frobenius classes.

\subsection{Structure of the symplectic group}

\noindent We use this section to recall and prove some properties of the
symplectic group that we will use in later sections. We restrict attention to finite fields,
although much of the theory applies to arbitrary fields. 

Let $W$ be a vector space of even dimension $2m$ over $\F_q$ and let $B:W\times W\to\F_q$ 
be a nondegenerate alternating bilinear form. Thus we have $B(w,w)=0$ for all $w\in W$.
We call an $\F_q$-linear automorphism $\sigma$ of $W$ an isometry (of $B$ or of $W$) if
\[
f(\sigma(u),\sigma(w))=f(u,w)
\]
for all $u$ and $w$ in $W$. The isometries of $B$ form a group under composition, called
the symplectic group associated to $B$. Since it is well known that there is
a single equivalence class of nondegenerate alternating bilinear forms defined on $W\times
W$, the symplectic group is uniquely determined up to conjugacy in the general linear group
and we denote it by $Sp(2m,q)$.

We call an element  $\sigma$ of $Sp(2m,q)$ 
a (symplectic) \emph{transvection} if its minimal polynomial is $(x-1)^2$ and $\sigma-I$ has rank one. Note
that if $q$ is a power of the prime $p$, a transvection $\sigma$ has order $p$. For if
$(\sigma-I)^2=0$, then $(\sigma-I)^p=\sigma^p-I=0$, and thus $\sigma^p=I$.

Suppose then that $\sigma$ is a transvection in $Sp(2m,q)$. It is well known that 
there are elements $u\neq 0$ in $W$ and $c\neq 0$ in $\F_q$ such that 
\[
\sigma(w)=w+cB(w,u)u
\]
for all $w\in W$. Conversely, any linear transformation $\sigma$ of $W$ defined in this way
is an element of $Sp(2m,q)$. 

Concerning the scalar $c$, we make the following observation. Suppose that $c$ is a square
in $\F_q$, say $c=d^2$. Then we set $v=du$ and we obtain
\[
\sigma(w)=w+B(w,v).
\]
In this case, we see that there is no need for the appearance of the scalar $c$ (since $v$ serves as well as $u$ for our purpose). We deduce that if $q$ is a power of 2, all symplectic
transvections are expressible by the formula
\[
\sigma(w)=w+B(w,u)u
\]
as $u$ runs over the nonzero elements of $W$. 

We consider what happens when $q$ is odd. It is easy to see that $u$ and $-u$ determine the same
symplectic transvection and this is the only way in which duplication occurs. 
Thus there are $(q^{2m}-1)/2$ symplectic transvections of this form. Now let
$c$ be a nonsquare in $\F_q$. Then symplectic transvections of the form
\[
\sigma(w)=w+cB(w,u)u
\]
cannot be expressed  as $w\to w+B(w,v)v$ for any choice of $v$. Thus, for this fixed nonsquare
$c$, we obtain a further $(q^{2m}-1)/2$ symplectic transvections and we have thus
accounted for a total of $q^{2m}-1$ symplectic transvections.

We proceed to describe the conjugacy classes of $Sp(2m,q)$ determined by transvections.

\begin{lemma} \label{conjugate_elements}
Let $\sigma$ and $\tau$ be symplectic transvections
acting on $W$, with 
\[
\sigma(w)=w+cB(w,u)u, \quad \tau(w)=w+cB(w,v)v,
\]
where $u$ and $v$ are nonzero elements of $W$ and $c$ is a nonzero scalar. Suppose
that  $g$ is an element of $Sp(2m,q)$ that satisfies $gu=v$.
Then we have
$\tau=g \sigma g^{-1}$.
\end{lemma}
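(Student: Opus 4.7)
The plan is a direct verification: compute the action of $g\sigma g^{-1}$ on an arbitrary vector $w\in W$ and check that it matches the formula for $\tau(w)$.

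First I would apply $g^{-1}$ to $w$ and then apply $\sigma$ using the given formula, obtaining
\[
\sigma(g^{-1}w) = g^{-1}w + cB(g^{-1}w,u)\,u.
\]
Applying $g$ to this and using that $g$ is $\F_q$-linear gives
\[
g\sigma g^{-1}(w) = w + cB(g^{-1}w,u)\,gu = w + cB(g^{-1}w,u)\,v,
\]
where I have used the hypothesis $gu=v$ in the last step.

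The remaining step is to rewrite $B(g^{-1}w,u)$ as $B(w,v)$. This is where the fact that $g$ lies in $Sp(2m,q)$ (rather than merely in $GL(W)$) enters: since $g$ is an isometry of $B$, we have $B(g^{-1}w,u) = B(g(g^{-1}w),gu) = B(w,v)$. Substituting this into the previous display yields
\[
g\sigma g^{-1}(w) = w + cB(w,v)\,v = \tau(w),
\]
and since $w$ was arbitrary this gives $g\sigma g^{-1}=\tau$, as required.

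There is essentially no obstacle here; the only thing to be careful about is not to lose the scalar $c$ and to invoke the isometry property on the pair $(g^{-1}w,u)$ with the transformation $g$ rather than $g^{-1}$. The argument does not need $u$ and $v$ to be nonzero or $c$ to be nonzero, but those assumptions are implicit in $\sigma$ and $\tau$ being transvections.
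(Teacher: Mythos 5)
Your proof is correct and takes essentially the same route as the paper: a direct verification of the conjugation identity using the hypothesis $gu=v$ and the fact that $g$ is an isometry of $B$. The only cosmetic difference is that the paper checks $\tau g = g\sigma$ and so also invokes that $\sigma$ is an isometry fixing $u$, whereas your computation of $g\sigma g^{-1}(w)$ needs the isometry property of $g$ alone, which is if anything slightly leaner.
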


\begin{proof}
 We have
\[
\tau g(w)=g(w)+cB(gw,v)v
\]
and likewise 
\[
g\sigma(w)=g(w)+cB(\sigma w,u)g(u).
\]
Thus since $gu=v$, we can complete the proof by showing that
\[
B(gw, v)=B(\sigma w, u)
\]
holds for all $w$. As $g$ is an isometry of $B$, we have
\[
B(gw,v)=B(w, g^{-1}v)=B(w,u).
\]
But $\sigma$ is also an isometry and it fixes $u$. Therefore,
\[
B(\sigma w, u)=B(\sigma w, \sigma u)=B(w,u),
\]
as required.
\end{proof}

When we take into account the fact that $Sp(2m,q)$ acts transitively on the nonzero elements
of the underlying space, and keep in mind the role of the scalar $c$ used to describe transvections,
we obtain the following result.

\begin{cor} \label{conjugacy_classes}
If $q$ is even, then the symplectic group $Sp(2m,q)$ contains
a single conjugacy class of symplectic transvections, which has size $q^{2m}-1$.

If $q$ is odd, $Sp(2m,q)$ contains exactly two conjugacy classes of symplectic transvections,
and each such class has size $(q^{2m}-1)/2$.
\end{cor}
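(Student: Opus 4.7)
The plan is to combine Lemma \ref{conjugate_elements} with the transitivity of $Sp(2m,q)$ on nonzero vectors of $W$ (a standard consequence of Witt's extension theorem, or of the existence of symplectic bases through any prescribed nonzero vector), together with a short computation describing how the scalar $c$ transforms under conjugation. Writing a transvection as $\tau_{u,c}(w)=w+cB(w,u)u$ with $u\neq 0$ and $c$ a nonzero scalar, the task is to partition the $q^{2m}-1$ transvections already enumerated in the text into conjugacy classes.

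I would begin with the rescaling identity $\tau_{u,c}=\tau_{du,\,c/d^{2}}$ for every nonzero $d\in\F_q$, which follows at once from $B(w,du)(du)=d^{2}B(w,u)u$. Combined with Lemma \ref{conjugate_elements}, it shows that any two transvections $\tau_{u,c}$ and $\tau_{v,c'}$ with $c^{-1}c'$ a nonzero square are conjugate: first rescale $u$ to make the two scalars agree, then use transitivity of $Sp(2m,q)$ on nonzero vectors to produce $g$ sending the rescaled vector to $v$, and apply the lemma.

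For the converse, I would show that the coset of $c$ modulo nonzero squares in $\F_q$ is actually a conjugation invariant. If $g\tau_{u,c}g^{-1}=\tau_{v,c'}$ with $g\in Sp(2m,q)$, then comparing the one-dimensional images of $\tau_{u,c}-I$ and $\tau_{v,c'}-I$, which are spanned by $u$ and $v$ respectively, forces $gu=\mu v$ for some nonzero $\mu\in\F_q$. A brief computation using $B(g^{-1}w,u)=B(w,gu)$, in the same spirit as the proof of Lemma \ref{conjugate_elements}, then yields $g\tau_{u,c}g^{-1}(w)=w+c\mu^{2}B(w,v)v$, so $c'=c\mu^{2}$.

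Putting the two halves together, the $q^{2m}-1$ transvections split into exactly as many conjugacy classes as there are square classes in $\F_q\setminus\{0\}$, and each class has the same size. In characteristic $2$ every nonzero element of $\F_q$ is a square, since the Frobenius $x\mapsto x^{2}$ is a bijection, so we obtain a single conjugacy class of size $q^{2m}-1$. In odd characteristic there are two square classes, producing two conjugacy classes of size $(q^{2m}-1)/2$ each. The only delicate step is the converse direction, since Lemma \ref{conjugate_elements} supplies only existence of conjugations and not obstructions; the argument above turns the rank-one structure of $\tau_{u,c}-I$ into an invariant that pins $c$ down modulo squares, which I expect to be the main point requiring care.
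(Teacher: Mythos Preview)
Your proof is correct and follows the same line of reasoning as the paper: use the rescaling $\tau_{u,c}=\tau_{du,c/d^{2}}$, Lemma~\ref{conjugate_elements}, and the transitivity of $Sp(2m,q)$ on nonzero vectors to show that the square class of $c$ governs conjugacy. The paper itself gives no formal proof of this corollary; it simply points to the preceding discussion (the enumeration of $q^{2m}-1$ transvections into the two families determined by square versus nonsquare $c$) together with Lemma~\ref{conjugate_elements} and transitivity. Your contribution beyond the paper's sketch is the explicit converse, namely that conjugation by $g\in Sp(2m,q)$ sends $\tau_{u,c}$ to $\tau_{v,c\mu^{2}}$ with $gu=\mu v$, so the square class of $c$ is a genuine invariant. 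The paper gets the same conclusion implicitly from the disjointness and size count of the two families, but your direct argument is cleaner and is exactly the ``delicate step'' you identified.
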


An interesting point emerges in the proof of Lemma  \ref{conjugate_elements}. 
Let $G$ be a subgroup of $Sp(2m,q)$
that acts transitively on the nonzero elements of $W$. Then $G$ has a transitive action
on any given  conjugacy class of  transvections in $Sp(2m,q)$. Thus, if $G$ contains
a transvection, it contains the entire conjugacy class of transvections
corresponding to the given scalar $c$. 
Surprisingly, perhaps, this is enough
to prove that $G$ is equal to $Sp(2m,q)$. 
This is an idea which we will use to identify some Galois groups
in the next section. We proceed to proofs of these claims.

\begin{lemma} \label{contains_conjugacy_class}
Let $G$ be a finite group and let $H$ be a subgroup of $G$. Suppose that $H$ contains
a conjugacy class $K$ of $G$. Then $H$ contains a normal subgroup of $G$ that contains $K$.
\end{lemma}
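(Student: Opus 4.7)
The plan is to let $N$ be the subgroup of $G$ generated by $K$, i.e.\ $N=\langle K\rangle$, and show that $N$ already does the job. By construction $K\subseteq N$, and since $K\subseteq H$ and $H$ is a subgroup of $G$, we have $N=\langle K\rangle\subseteq H$. The only thing left is to verify that $N$ is normal in $G$.

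For normality, I would use the fact that $K$ is a full $G$-conjugacy class. For any $g\in G$ and $k\in K$, the element $gkg^{-1}$ lies in $K$, hence in $N$. Thus conjugation by $g$ sends the generating set $K$ of $N$ into $N$, and since conjugation is a group automorphism of $G$, it follows that $gNg^{-1}\subseteq N$; applying the same argument to $g^{-1}$ gives the reverse inclusion, so $gNg^{-1}=N$. Therefore $N\trianglelefteq G$.

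There is essentially no obstacle here: the argument is just the observation that the subgroup generated by a union of conjugacy classes of $G$ is automatically normal in $G$. The only thing to be slightly careful about is that $N$ is generated by $K$ as a subgroup (products of elements of $K$ and their inverses), not merely the set $K$ itself, but since $K$ is closed under $G$-conjugation, so is any word in elements of $K$, and normality follows.
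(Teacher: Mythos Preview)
Your proof is correct. It differs from the paper's argument in the choice of normal subgroup: the paper takes the \emph{core} of $H$, namely $\bigcap_{g\in G} g^{-1}Hg$, observes that this is normal in $G$ and contained in $H$, and notes that $K$, being $G$-conjugation invariant, lies in every conjugate $g^{-1}Hg$ and hence in the core. You instead take $N=\langle K\rangle$, the subgroup generated by $K$, which is normal because $K$ is a full conjugacy class and is contained in $H$ because $K\subseteq H$. Both arguments are equally short; the paper produces the \emph{largest} normal subgroup of $G$ inside $H$, while yours produces the \emph{smallest} normal subgroup of $G$ containing $K$ (its normal closure, which here coincides with $\langle K\rangle$). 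For the application in the paper (Corollary~\ref{is_the_whole_symplectic_group}), either choice works, since only the existence of some such normal subgroup is used.
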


\begin{proof}
Given $g\in G$, the subgroup $g^{-1}Hg$ of $G$ is called a $G$-conjugate of $H$. The intersection
of all the conjugates of $H$ is a subgroup of $G$ contained in $H$ and it is clearly normal
in $G$. It is called the core of $H$. Since $K$ is contained in all conjugates of $H$, as it
is invariant under $G$-conjugation, $K$ is contained in the core of $H$, as required.
\end{proof}

\begin{cor} \label{is_the_whole_symplectic_group}
Let $G$ be a subgroup of the symplectic group $Sp(2m,q)$. Suppose that $G$ acts transitively
on the nonzero elements of the underlying vector space of dimension $2m$. Suppose also
that $G$ contains a symplectic transvection. Then $G=Sp(2m,q)$. 
\end{cor}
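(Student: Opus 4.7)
The plan is to combine the transitivity hypothesis with the two preceding lemmas to show that $G$ contains an entire $Sp(2m,q)$-conjugacy class of transvections, and then to invoke the simplicity of $PSp(2m,q)$ to upgrade this to $G = Sp(2m,q)$.

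First I would show $G$ contains a full $Sp(2m,q)$-conjugacy class of transvections. Write the given transvection as $\sigma(w) = w + cB(w,u)u$ for some nonzero $u \in W$ and some $c \in \F_q^*$. For any nonzero $v \in W$, transitivity of $G$ on nonzero vectors gives $g \in G$ with $gu = v$, and then $g\sigma g^{-1} \in G$. By Lemma \ref{conjugate_elements}, this conjugate is exactly the transvection $w \mapsto w + cB(w,v)v$. Letting $v$ range over all nonzero vectors, $G$ contains every transvection with the fixed scalar $c$, and by Corollary \ref{conjugacy_classes} these form a single $Sp(2m,q)$-conjugacy class $K$ (the unique class when $q$ is even, one of the two classes when $q$ is odd).

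Next, Lemma \ref{contains_conjugacy_class} applied to $H = G$ yields a normal subgroup $N$ of $Sp(2m,q)$ with $K \subseteq N \subseteq G$, so it remains to prove $N = Sp(2m,q)$. I would obtain this from the classical fact that, outside the small exceptions $(2m,q) \in \{(2,2),(2,3),(4,2)\}$, the group $PSp(2m,q)$ is simple and $Sp(2m,q)$ is perfect. A symplectic transvection has minimal polynomial $(x-1)^2$, so it is not a scalar and in particular does not lie in the center $\{\pm I\}$ of $Sp(2m,q)$; hence the image of $N$ in $PSp(2m,q)$ is a nontrivial normal subgroup, which by simplicity must equal $PSp(2m,q)$. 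This gives $N \cdot Z(Sp(2m,q)) = Sp(2m,q)$, so $Sp(2m,q)/N$ is abelian, and perfectness then forces $N = Sp(2m,q)$. The three exceptional cases can be dispatched by direct inspection of the (small) groups involved.

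The main obstacle is the last step: the argument leans on the nontrivial theorem that $PSp(2m,q)$ is simple for $(2m,q)$ outside the three exceptions, together with perfectness of $Sp(2m,q)$. A more self-contained alternative would be to prove directly that a single conjugacy class of symplectic transvections generates $Sp(2m,q)$ by carrying out an explicit symplectic row-reduction with respect to a hyperbolic basis of $W$, expressing every element of $Sp(2m,q)$ as a product of transvections of the prescribed type. That route is elementary but computational, and appealing to the simplicity theorem is the cleaner path.
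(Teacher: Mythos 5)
Your proposal is correct, and its first half is exactly the paper's argument: use Lemma \ref{conjugate_elements} plus transitivity to get the whole $Sp(2m,q)$-class $K$ of transvections with the given scalar $c$, then Lemma \ref{contains_conjugacy_class} to get a normal subgroup $N$ of $Sp(2m,q)$ with $K\subseteq N\subseteq G$. Where you diverge is the endgame. The paper splits by parity: for $q$ even it notes there is a single class of transvections (Corollary \ref{conjugacy_classes}), so $G$ contains all transvections, and quotes the fact that $Sp(2m,q)$ is generated by its transvections; for $q$ odd it quotes the classification of normal subgroups of $Sp(2m,q)$ (only the central $\{\pm I\}$ is proper, with the single exception $Sp(2,3)$, which it treats directly by observing that either transvection class generates). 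You instead argue uniformly via simplicity of $PSp(2m,q)$ together with perfectness of $Sp(2m,q)$: the image of $N$ in $PSp$ is nontrivial normal, hence everything, so $Sp/N$ is abelian and perfectness gives $N=Sp(2m,q)$. These appeals are essentially equivalent in strength (the ``normal subgroups are central'' statement the paper cites is simplicity of $PSp$ plus perfectness), so neither route is more elementary; yours is more uniform, the paper's avoids two of your three exceptional cases because $Sp(2,2)$ and $Sp(4,2)$ occur only for even $q$, where generation by transvections needs no exception. The one soft spot in your write-up is that you only assert the exceptions $(2,2),(2,3),(4,2)$ ``can be dispatched by direct inspection'' without doing it; this is a genuine (if small) omission, though easily filled, e.g.\ for $Sp(2,3)$ transitivity on the $8$ nonzero vectors plus an element of order $3$ forces $24\mid|G|$, and for $Sp(2,2)$, $Sp(4,2)$ the full class of transvections you already have in $G$ generates the group. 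Your suggested self-contained alternative (proving directly that one class of transvections generates) is in substance the even-characteristic half of the paper's proof, outsourced there to Wilson's book.
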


\begin{proof}
Let $\sigma$ be  a symplectic transvection contained in $G$. The proof of Lemma \ref{conjugate_elements}
implies that $G$ contains the $Sp(2m,q)$ conjugacy class $K$, say, that contains $\sigma$. Then
Lemma \ref{contains_conjugacy_class} implies that $G$ contains a normal subgroup
of $Sp(2m,q)$ containing $K$.

 Consider the case that $q$ is even. By Corollary \ref{conjugacy_classes},
 $Sp(2m,q)$ has only one conjugacy class of transvections. 
 Therefore, $G$ contains all the transvections.
Since $Sp(2m,q)$ is generated by its transvections, 
 (see \cite{Wil} Section 3.5.1 for example) $G$ is equal to $Sp(2m,q)$. 

Now consider the case that $q$ is odd. Let $S$ be the subgroup of $Sp(2m,q)$ generated
by the transvections in a conjugacy class contained in $G$. We claim that $S$ is equal to
$Sp(2m,q)$. Certainly $S$ is normal in $Sp(2m,q)$,
as it is generated by a conjugacy class of $Sp(2m,q)$, and it is contained in $G$. 
However, apart from one exception for odd $q$, 
the only proper normal subgroup of $Sp(2m,q)$ is the central one of order two generated by
$-I$ (see for example \cite{J}, Theorem 6.16, p.396). Clearly, $S$ is not this central subgroup. 

The only possible exception arises with $Sp(2,3)$. 
This group contains two conjugacy classes
of transvections, but one class is the inverse of the other. Thus, the subgroup generated by
either class of transvections is $Sp(2,3)$, and we are finished.
\end{proof}

\subsection{Realizing symplectic transvections}

\noindent Let $\Phi$ be a monic $q$-palindromic $q$-polynomial of even $q$-degree $2m$ in $\F_q[x]$. Write
\[
\Phi=\sum_{i=0}^{2m} c_i x^{q^i},
\]
where the coefficients $c_i$ are in $\F_q$, $c_0=c_{2m}=1$, and
\[
c_{2m-i}=c_i
\]
for all $i$. We form the $q$-polynomial
\[
L=L(x)=\Phi+tx^{q^m}
\]
in $\F_q(t)[x]$.  It follows from Corollary \ref{irreducibility_criterion} that $L(x)/x$ is irreducible over $\F_q(t)$.
Let $G$ be its Galois group over $\F_q(t)$ and let $V$ be its space of roots in its splitting
field. By Galois theory, as $L(x)/x$ is irreducible, $G$ acts transitively on the nonzero
elements of $V$.

Since $\Phi$ is $q$-palindromic, so also is $L$. Thus, by \cite{E}, Section 5, there is a nondegenerate alternating bilinear form defined on $V\times V$ which is invariant under
$G$-action. Hence $G$ is a subgroup of the symplectic group $Sp(2m,q)$. 

We intend to specialize $t$ to the value 0. Then the corresponding reduced polynomial
$\overline{L}$ in $\F_q[x]$ is $\Phi$. Since $\Phi$ has no repeated roots, there is
a corresponding Frobenius class in $G$. An element in this class acts as cyclic element on
$V$ and its minimal polynomial, $M(x)$, say, is
\[
x^{2m}+c_{2m-1}x^{2m-1}+\cdots + c_1x+1.
\]
Because the coefficients $c_i$ satisfy $c_i=c_{2m-i}$ for all $i$, $M(x)$ 
is self-reciprocal.

Our intention is to show that there are many choices of $\Phi$, and corresponding $M(x)$,
that guarantee that some power of an element in the corresponding Frobenius class is a symplectic transvection. In these cases, Corollary \ref{is_the_whole_symplectic_group} will enable us to conclude that $G$ is $Sp(2m,q)$.

Here is our main result of this section.

\begin{thm} \label{symplectic_group_as_Galois_group}
Let
$M(x)$ be a monic self-reciprocal polynomial of even degree $2m$
in $\F_q[x]$, satisfying $M(0)=1$. Suppose that $x-1$ is a factor of $M(x)$ with multiplicity two 
and $M_1(x)=M(x)/(x-1)^2$ has no factor equal to $x\pm 1$ and all its
irreducible factors have
multiplicity one.
Set
\[
M(x)=x^{2m}+c_{2m-1}x^{2m-1}+\cdots +c_1 x+1,
\]
where the coefficients $c_i$ satisfy $c_i=c_{2m-i}$. Define the $q$-polynomial $\Phi$ by
\[
\Phi(x)=x^{q^{2m}}+c_{2m-1}x^{q^{2m-1}}+\cdots +c_1 x^q+x
\]
in $\F_q[x]$ and similarly define $L$ by
\[
L(x)=\Phi(x)+tx^{q^m}.
\]
 Let $G$ be the Galois group  of $L(x)$ over $\F_q(t)$.
Then there is a prime ideal of degree $1$ in $\F_q[t]$
whose corresponding Frobenius class in $G$ contains  an element a power of which is a symplectic transvection.
\end{thm}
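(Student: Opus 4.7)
The plan is to specialize $t$ to $0$ (using the degree-$1$ prime ideal $(t)\subset \F_q[t]$) and extract the desired transvection as a suitable power of the resulting Frobenius element.  At $t=0$ the reduced polynomial is $\overline{L}=\Phi$, whose $x$-coefficient is $c_{0}=1\neq 0$, so Corollary~\ref{cyclic_element} applies and produces a Frobenius element $\sigma\in G$ acting cyclically on $V$ with minimal polynomial equal to the conventional associate of $\Phi$, which is exactly $M(x)$.

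Next I would apply Lemma~\ref{extension_lemma} to the factorization $M(x)=(x-1)^{2}M_{1}(x)$.  This is a coprime factorization (since by hypothesis $M_{1}$ has no factor $x-1$) and $M_{1}$ is square-free, so the lemma produces commuting $\sigma_{1},\sigma_{2}$, each a power of $\sigma$, with $\sigma=\sigma_{1}\sigma_{2}$ and $\sigma_{1}$ of minimal polynomial $(x-1)^{2}$.  Cyclicity of $\sigma$ forces $\dim V=\deg M=2m$, and the primary decomposition $V=V_{1}\oplus V_{2}$ then satisfies $\dim V_{1}=\deg(x-1)^{2}=2$ and $\dim V_{2}=\deg M_{1}=2m-2$, with $\sigma_{1}$ acting as $\sigma|_{V_{1}}$ on $V_{1}$ and as the identity on $V_{2}$.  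On the two-dimensional $V_{1}$, a transformation with minimal polynomial $(x-1)^{2}$ must have $(\sigma_{1}-I)|_{V_{1}}$ nilpotent of rank one; on $V_{2}$, $\sigma_{1}-I=0$.  Hence $\sigma_{1}-I$ has rank one globally while $(\sigma_{1}-I)^{2}=0$.

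It remains to verify that $\sigma_{1}$ lies in $Sp(2m,q)$, making it a \emph{symplectic} transvection in the sense defined earlier.  Since $L$ is itself $q$-palindromic (its coefficient sequence is $c_{0},\ldots,c_{m-1},c_{m}+t,c_{m+1},\ldots,c_{2m}$, which is palindromic as a tuple in $\F_q[t]$), the result of \cite{E} cited above places $G\subseteq Sp(2m,q)$, and $\sigma_{1}\in G$ inherits membership.  The main obstacle I expect is making sure that the cyclicity of $\sigma$ pins down $\dim V_{1}$ to \emph{exactly} $2$; any larger value would allow $(\sigma_{1}-I)$ to have rank at least $2$ and spoil the transvection test.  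This is secured by Corollary~\ref{cyclic_element} together with the fact that for a cyclic transformation the summands in the primary decomposition have dimensions equal to the degrees of the corresponding factors of the minimal polynomial.
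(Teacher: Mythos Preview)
Your proposal is correct and follows essentially the same route as the paper: specialize at the prime $(t)$, invoke Corollary~\ref{cyclic_element} to obtain a cyclic $\sigma$ with minimal polynomial $M(x)$, and then use Lemma~\ref{extension_lemma} on the coprime factorization $M(x)=(x-1)^{2}M_{1}(x)$ to extract the unipotent part $\sigma_{1}$ as a power of $\sigma$. Your write-up is in fact a bit more careful than the paper's, which simply asserts that $\sigma_{1}$ ``fixes pointwise a hyperplane of $V$''; you justify this explicitly via the cyclicity of $\sigma$ forcing $\dim V_{1}=2$ in the primary decomposition.
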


\begin{proof}
Let $V$ be the space of roots of $L$ in its splitting field over $\F_q(t)$. As 
we mentioned earlier, since $\Phi$
is $q$-palindromic, there is a nondegenerate alternating bilinear form
defined on $V\times V$ which is $G$-invariant. 
 Thus $G$ is a subgroup 
of $Sp(2m,q)$ that acts transitively
on the nonzero elements of the underlying vector space, since as we have observed, 
$L(x)/x$ is irreducible in $\F_q(t)[x]$ .

Let $\sigma$ be an element in the  Frobenius class of $G$ defined by the specialization of $t$ to 0.
Corollary \ref{cyclic_element} shows that $\sigma$ has minimal polynomial $M(x)$ when it acts on $V$. Lemma \ref{extension_lemma}
and its
proof imply that we can write 
\[
\sigma=\sigma_1\sigma_2=\sigma_2\sigma_1,
\]
where $\sigma_1$ has minimal polynomial $(x-1)^2$ and fixes pointwise a hyperplane
of $V$, and $\sigma_2$ has order prime to $p$. Thus $\sigma_1$ is a transvection. But
Lemma \ref{extension_lemma} shows that $\sigma_1$ is a power of $\sigma$ and hence it is
in $G$. Thus $\sigma_1$ is a symplectic transvection. 
\end{proof}

\begin{cor} \label{symplectic_group_as_Galois_group2}
Let $L(x)$ be as in Theorem \ref{symplectic_group_as_Galois_group}.
Then the Galois group $G$ of $L(x)$ over $\F_q(t)$ is $Sp(2m,q)$.
\end{cor}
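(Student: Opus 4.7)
The plan is to recognize that this corollary is essentially a direct combination of Theorem \ref{symplectic_group_as_Galois_group} with the earlier group-theoretic result Corollary \ref{is_the_whole_symplectic_group}. Almost everything needed has already been established; one just needs to verify the three hypotheses of that corollary are met.

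First I would note that, by the discussion preceding Theorem \ref{symplectic_group_as_Galois_group}, the $q$-palindromic nature of $\Phi$ (and hence of $L$) together with the result from \cite{E} guarantees a nondegenerate $G$-invariant alternating bilinear form on $V\times V$. This places $G$ as a subgroup of $Sp(2m,q)$, supplying the first hypothesis of Corollary \ref{is_the_whole_symplectic_group}.

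Next I would establish transitivity of $G$ on the nonzero elements of $V$. Here the key input is Corollary \ref{irreducibility_criterion}: since the $q$-polynomial $\Phi(x)$ is viewed as $\Phi(x)/x$ (a polynomial in $x^{q-1}$) and the coefficient of $t$ in $L(x)/x$ is $x^{q^m-1}$, which is relatively prime to the $t$-free part $\Phi(x)/x$ (they share no common factor other than constants, as $\Phi(0)$ is nonzero in the $q$-polynomial sense), $L(x)/x$ is irreducible over $\F_q(t)$. By Galois theory this means $G$ is transitive on the roots of $L(x)/x$, namely the nonzero elements of $V$.

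Finally, Theorem \ref{symplectic_group_as_Galois_group} itself supplies a symplectic transvection inside $G$: the Frobenius class at $t\mapsto 0$ contains an element $\sigma$ whose $p$-part $\sigma_1$ is a power of $\sigma$, lies in $G$, and is a symplectic transvection. With all three hypotheses of Corollary \ref{is_the_whole_symplectic_group} verified---$G\leq Sp(2m,q)$, transitivity on nonzero vectors, and containment of a transvection---we conclude $G=Sp(2m,q)$. The only mild subtlety in the argument is the transitivity step, where one has to be careful that the factorization of $L(x)$ strips off only the trivial root $x=0$ and that what remains is genuinely irreducible; but the hypotheses on $M(x)$ (in particular $M(0)=1$, ensuring a nonzero linear coefficient in $\Phi$) make this automatic via Corollary \ref{irreducibility_criterion}.
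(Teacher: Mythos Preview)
Your proposal is correct and follows essentially the same route as the paper: verify that $G\leq Sp(2m,q)$, that $G$ is transitive on nonzero vectors via the irreducibility of $L(x)/x$, and that $G$ contains a transvection by Theorem~\ref{symplectic_group_as_Galois_group}, then invoke Corollary~\ref{is_the_whole_symplectic_group}. The paper's version is simply more terse, since the first two facts were already recorded inside the proof of Theorem~\ref{symplectic_group_as_Galois_group} and so need not be re-argued.
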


\begin{proof}
Theorem \ref{symplectic_group_as_Galois_group} implies that $G$
contains a symplectic transvection.
Corollary \ref{is_the_whole_symplectic_group} implies that $G$ is $Sp(2m,q)$.
\end{proof}

Note that when $m=1$, there is a unique polynomial $L=x^{q^2}+tx^q+x$, and its Galois group
is $SL(2,q)=Sp(2,q)$.

We show next that there are many choices for $\Phi$ when $q$ is a power of 2, each leading to a different Frobenius class.

\begin{thm} \label{choices_for_Phi}
Suppose that $q$ is a power of $2$. Then in the construction described by Theorem \ref{symplectic_group_as_Galois_group}, there are 
\[
\frac{(q^{m-1}-1)(q-1)}{q+1}
\]
choices for $\Phi$ if $m$ is odd, and 
\[
\frac{(q^{m-1}+1)(q-1)}{q+1}
\]
choices if $m$ is even.
\end{thm}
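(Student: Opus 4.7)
The plan is to reduce the counting problem to enumerating admissible self-reciprocal polynomials $M(x)$, and then to invoke the enumeration already done in Corollary \ref{number_of_selfreciprocal_in_char_2} and Theorem \ref{number_of_square_free_polynomials}.

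First I would observe that a $q$-palindromic $\Phi$ of the form used in Theorem \ref{symplectic_group_as_Galois_group} is in bijective correspondence with its conventional associate $M(x) = x^{2m}+c_{2m-1}x^{2m-1}+\cdots +c_1x+1$, since the same coefficients $c_i$ appear in both and the $q$-palindromic condition on $\Phi$ translates exactly to the self-reciprocal condition $c_i = c_{2m-i}$ on $M$. So counting choices of $\Phi$ is the same as counting admissible $M$'s.

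Next I would translate the hypotheses of Theorem \ref{symplectic_group_as_Galois_group} into a simple description of these $M$'s in characteristic $2$. The conditions are: $M$ is monic self-reciprocal of degree $2m$ with $M(0)=1$; $(x-1)$ divides $M$ with multiplicity exactly two; $M_1 = M/(x-1)^2$ has no factor $x\pm 1$; and $M_1$ is squarefree. When $q$ is even, $x+1 = x-1$, so the condition on $M_1$ just says that $x-1$ appears in $M$ with multiplicity exactly two and that $M_1$ is squarefree, i.e., all other roots of $M$ are simple. Also, $M(0)^2 = 1$ forces $M(0)=1$ automatically in characteristic $2$. So the admissible $M$'s are precisely the elements of the set $\Omega_{2m}$ defined in Corollary \ref{number_of_selfreciprocal_in_char_2}.

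Finally I would apply Corollary \ref{number_of_selfreciprocal_in_char_2} to identify $|\Omega_{2m}|$ with $Z_m$, the number of monic polynomials $f$ of degree $m$ in $\F_q[x]$ with $f(0)=0$ and no repeated irreducible factors, and then invoke Theorem \ref{number_of_square_free_polynomials} with $n=m$ to read off the two closed-form expressions, one for odd $m$ and one for even $m$, which match the claimed formulas verbatim.

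The statement is essentially a bookkeeping consequence of results already in hand; the only point requiring genuine care is verifying that in characteristic $2$ the hypotheses of Theorem \ref{symplectic_group_as_Galois_group} collapse exactly to membership in $\Omega_{2m}$, in particular checking that the ``no factor $x\pm 1$'' and ``$M(0)=1$'' conditions become automatic or redundant as described. Once that reduction is in place there is nothing further to compute.
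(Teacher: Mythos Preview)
Your proposal is correct and follows essentially the same route as the paper's own proof: reduce to counting the admissible self-reciprocal polynomials $M(x)$, identify these with the set $\Omega_{2m}$ via Corollary~\ref{number_of_selfreciprocal_in_char_2}, and then read off $Z_m$ from Theorem~\ref{number_of_square_free_polynomials}. Your write-up is in fact slightly more explicit than the paper's in checking that the characteristic-$2$ hypotheses collapse exactly to membership in $\Omega_{2m}$, but the underlying argument is identical.
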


\begin{proof}
In characteristic 2, $\Phi$ is determined by a monic self-reciprocal polynomial $M(x)$ 
in $\F_q[x]$ with the property that $x-1$ has multiplicity two as a divisor 
of $M(x)$ and $M(x)/(x-1)^2$ is square-free. Corollary \ref{number_of_selfreciprocal_in_char_2}
shows that the number of such $M$ equals the number of monic square-free polynomials
$f$ of degree $m$ in $\F_q[x]$ with $f(0)=0$. Theorem \ref{number_of_square_free_polynomials}
in turn gives us the number of such polynomials $f$.
\end{proof}

\section{The orthogonal group of even dimension}\label{orth_sect}

\noindent Following the pattern of the previous section, we shall discuss properties
of orthogonal groups in this section.

Let $W$ be a vector space of even dimension
$2m$ over $\F_q$. Let $f:W \times W\to \F_q$ be a nondegenerate symmetric  bilinear
form. Recall that a function $Q:W\to \F_q$ is called a quadratic form with polarization $f$ if
we have $Q(\lambda w)=\lambda^2Q(w)$ for all $\lambda$ in $\F_q$ and all $w$ in $W$ and also
\[
Q(u+w)=Q(u)+f(u,w)+Q(w)
\]
for all $u$ and $w$ in $W$. 

As is well known, when $q$ is odd, $f$ and $Q$ determine each other, since $2Q(w)=f(w,w)$
for all $w\in W$. However, this is not true when $q$ is even: $Q$ determines $f$ but
$f$ does not determine $Q$. Note that when $q$ is even, the equation $2Q(w)=f(w,w)$ implies that
$f$ is alternating. 
For arbitrary $q$, we say that a quadratic form is nondegenerate if its polarization
is nondegenerate.

We say that an invertible  linear transformation $\sigma:W\to W$ is an isometry of $Q$ if we have
$Q(\sigma w)=Q(w)$ for all $w$ in $W$. It is easy to see that an isometry of $Q$ must also be an isometry of the polarization $f$. The isometries of $Q$ form a group under composition, called
the orthogonal group determined by $Q$. From what we have said above, the orthogonal group
is thus a subgroup of the symplectic group determined by $f$ when $q$ is even.

It is well known that, up to equivalence defined by the action of invertible linear transformations, there are precisely two equivalence classes of quadratic forms defined
on $W$ with nondegenerate polarization, the two equivalence classes being distinguished in the following way. We say that a vector $w$ is isotropic (with respect to $Q$) if $Q(w)=0$. 
For one equivalence class of forms, the number of nonzero isotropic vectors is
\[
(q^{m-1}+1)(q^m-1).
\]
For the other equivalence class, the number is
\[
(q^{m-1}-1)(q^m+1).
\]
The first number is larger than the second. The orthogonal group of a quadratic form with the larger number of isotropic points is denoted by $O^+(2m,q)$, the orthogonal group
for the other type being denoted by $O^-(2m,q)$. We let $O(2m,q)$ denote either of these two
types of orthogonal group (they are not isomorphic, as they have different orders,
$|O^-(2m,q)|$ being the larger order). We remark that the word singular is sometimes
used in place of isotropic in the terminology of quadratic forms.

We now divide our discussion into two cases, starting with the odd characteristic case. 
Thus, let $q$ be odd and let $Q$ be a nondegenerate quadratic form defined on $W$.
An element $\sigma$ of $O(2m,q)$ is called a reflection if $\sigma$ has order 2
and $\sigma$ fixes pointwise a subspace of codimension one in $W$. It is well known
that a reflection is expressible in the form
\[
\sigma(w)=w-f(w,u)u,
\]
where $u$ is a vector that satisfies $Q(u)=1$ (thus $f(u,u)=2$). We say that a vector
$u$ with $Q(u)=1$ is a vector of norm one and $\sigma$ is the reflection determined
by the vector $u$. Note that $\sigma(u)=-u$ and $\sigma$ fixes pointwise
the subspace of vectors orthogonal to $u$.

A consequence of the Cartan--Dieudonn\'e theorem tells us that $O(2m,q)$ is generated
by its reflections, \cite{Gr}, Theorem 6.6, p.48. Note that a reflection has determinant $-1$, so that $O(2m,q)$
has a subgroup of index 2, denoted by $SO(2m,q)$, and called the special orthogonal group.
It consists of orthogonal isometries of determinant 1.

As far as the problem of realizing subgroups of $O(2m,q)$ as Galois groups
is concerned, the following ideas are relevant
to us. Let $G$ be a subgroup of $O(2m,q)$ and let $u_1$ and $u_2$ be vectors in $W$ of norm one.
Suppose that there is an element $g$ in $G$ with $g(u_1)=u_2$. Then the reflections
determined by $u_1$ and $u_2$ are conjugate under the action of $g$. This may be proved in the spirit of Lemma \ref{conjugate_elements}. 

The following analogue of Lemma \ref{is_the_whole_symplectic_group} is then straightforward to prove given the Cartan--Dieudonn\'e theorem.

\begin{lemma} \label{acts_Transitively_on_reflections}
Let $G$ be a subgroup of $O(2m,q)$, where $q$ is odd. Suppose that $G$ acts transitively
by conjugation
on the reflections in $O(2m,q)$. Suppose also that $G$ contains a reflection.
Then $G=O(2m,q)$.
\end{lemma}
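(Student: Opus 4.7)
The proof follows the same pattern as Corollary \ref{is_the_whole_symplectic_group}, but is actually simpler because the transitivity hypothesis is directly about reflections rather than about vectors of the underlying space.

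My plan is as follows. First, observe that the two hypotheses immediately force $G$ to contain \emph{every} reflection of $O(2m,q)$: pick the one reflection $\sigma \in G$ guaranteed by the assumption, and let $\tau \in O(2m,q)$ be an arbitrary reflection. By the transitive conjugation action, there exists $g \in G$ with $g\sigma g^{-1} = \tau$. Since $g$ and $\sigma$ both lie in $G$, the conjugate $\tau$ lies in $G$ as well. Hence every reflection of $O(2m,q)$ belongs to $G$.

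Second, invoke the Cartan–Dieudonn\'e theorem (cited in the preceding paragraph as \cite{Gr}, Theorem 6.6, p.48), which asserts that $O(2m,q)$ is generated by its reflections. Combined with the previous step, this gives $O(2m,q) \subseteq G$. Since $G$ is by hypothesis a subgroup of $O(2m,q)$, we conclude $G = O(2m,q)$.

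There is really no serious obstacle here; the statement is designed so that everything reduces to the generation result. Note in particular that, unlike the symplectic case in Corollary \ref{is_the_whole_symplectic_group}, we do not need to pass through the normal closure (Lemma \ref{contains_conjugacy_class}) or appeal to the classification of normal subgroups of the simple group: the assumption of transitive conjugation on \emph{all} reflections supplies the full conjugacy class directly, bypassing the subtlety that in the symplectic case over an odd field there are two conjugacy classes of transvections. The only small caveat to keep in mind is that the Cartan–Dieudonn\'e generation statement must apply to the full orthogonal group (not merely to the special orthogonal group), which is exactly how the reference is formulated.
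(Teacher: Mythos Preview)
Your argument is correct and matches the paper's indicated approach: the paper omits a formal proof, stating only that the lemma is ``straightforward to prove given the Cartan--Dieudonn\'e theorem,'' and your write-up is exactly the intended one-line deduction from that generation result.
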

 
 We turn to a sketch of properties of the orthogonal
 group in characteristic 2. Let $q$ be even and let $Q$ be a nondegenerate quadratic form defined on $W$, with polarization $f$. Let $\sigma$ be a transvection in $O(2m,q)$.
   We know that, for all
 $w$ in $W$, we can write
 \[
 \sigma(w)=w+f(u,w)u
 \]
 for a unique $u\in W$ (since we are working in characteristic 2). Thus we have
 \[
 Q(w)=Q(\sigma(w))=Q(w+f(u,w)u).
 \]
 Since $Q$ is a quadratic form with respect to $f$, 
 \[
 Q(w+f(u,w)u)=Q(w)+f(u,w)^2+f(u,w)^2Q(u).
 \]
 It follows that
 \[
 f(u,w)^2=f(u,w)^2Q(u).
 \]
 Since this holds for all $w$, $Q(u)=1$ and we deduce that $u$ has norm one.
 
 Conversely, any symplectic transvection defined by an element of norm one is an orthogonal
 transvection. We have thus shown the following result.
 
 \begin{lemma} \label{orthogonal_transvections}
 The number of orthogonal transvections in $O(2m,q)$ equals the number of vectors of norm one
 in $W$.
 \end{lemma}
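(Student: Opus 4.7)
The plan is to establish a bijection between the set of orthogonal transvections in $O(2m,q)$ and the set of vectors $u \in W$ with $Q(u) = 1$. The forward map sends the vector $u$ to the transvection $\sigma_u(w) = w + f(u,w)u$, and the content of the lemma is just that this map is a well-defined bijection.

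First I would verify that the map $u \mapsto \sigma_u$ is injective on nonzero vectors. Suppose $\sigma_u = \sigma_v$ for nonzero $u$ and $v$ in $W$. Then $f(u,w)u = f(v,w)v$ for every $w \in W$. Since $f$ is nondegenerate, there exists $w_0$ with $f(u,w_0) \neq 0$, which forces $v = \lambda u$ for some $\lambda \in \F_q^\times$. Substituting back yields $f(u,w)u = \lambda^2 f(u,w)u$, so $\lambda^2 = 1$, and since we are in characteristic 2 this gives $\lambda = 1$. Thus distinct nonzero vectors produce distinct transvections.

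Next, I would invoke the discussion preceding the lemma to handle both directions of the correspondence. That discussion shows every orthogonal transvection has the form $\sigma_u$ for some unique nonzero $u$, and moreover the identity $Q(w + f(u,w)u) = Q(w) + f(u,w)^2 Q(u) + f(u,w)^2$ forces $Q(u) = 1$. Conversely, reading the same calculation in the other direction, any $u$ with $Q(u) = 1$ makes $\sigma_u$ preserve $Q$, and since $u \neq 0$ (because $Q(0) = 0 \neq 1$), the transvection $\sigma_u$ is nontrivial and lies in $O(2m,q)$.

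Combining these observations, the assignment $u \mapsto \sigma_u$ restricts to a bijection between $\{u \in W : Q(u) = 1\}$ and the set of orthogonal transvections in $O(2m,q)$, proving the claim. I do not anticipate any substantive obstacle here, since essentially all the work has been done in the paragraphs preceding the lemma statement; the only item to nail down is the uniqueness of $u$, and that is immediate from nondegeneracy of $f$ together with the characteristic 2 hypothesis.
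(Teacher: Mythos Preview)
Your proposal is correct and takes essentially the same approach as the paper. The paper's proof is precisely the discussion in the paragraphs immediately preceding the lemma (ending with ``We have thus shown the following result''), which you explicitly invoke; the only addition you make is an explicit verification of the uniqueness of $u$, which the paper asserts parenthetically (``for a unique $u\in W$ (since we are working in characteristic 2)'') but does not spell out.
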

 
 In the case of $O^+(2m,q)$, this number is $q^{m-1}(q^m-1)$, while for $O^-(2m,q)$, it is
 $q^{m-1}(q^m+1)$. 
 
 It is straightforward to show that the orthogonal group acts transitively on all vectors
 $w$ satisfying $Q(w)=\lambda$ for any choice of $\lambda$ in $\F_q$. Thus, with one exception,
 $O(2m,q)$ has exactly $q$ orbits on the nonzero elements of $W$ (the exception occurs
 for $O^-(2,q)$, since there are no nonzero isotropic vectors in the corresponding geometry).
 
 When we take into account the transitive action of $O(2m,q)$ on vectors of norm one, the following analogue of Corollary \ref{conjugacy_classes} may easily be established. We omit the formal proof. 
 
 \begin{lemma} \label{conjugacy_classes_of_orthogonal_transvections}
Suppose that $q$ is even. Then there is a single conjugacy class  of transvections in $O(2m,q)$. The size of this class
is the number of vectors of norm one in the underlying vector space.
\end{lemma}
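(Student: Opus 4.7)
The plan is to mirror the symplectic argument used to establish Corollary \ref{conjugacy_classes}, adapting it to the characteristic-$2$ orthogonal setting using the dictionary between orthogonal transvections and vectors of norm one that was established in Lemma \ref{orthogonal_transvections}.

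First I would set up notation: given a norm-one vector $u\in W$ (so $Q(u)=1$ and hence $f(u,u)=0$ since $q$ is even), let $\sigma_u$ denote the orthogonal transvection $\sigma_u(w)=w+f(u,w)u$. By Lemma \ref{orthogonal_transvections}, the assignment $u\mapsto \sigma_u$ is a well-defined surjection from the set of norm-one vectors onto the set of orthogonal transvections; I would also note that it is injective, because the symplectic transvection $\sigma_u$ uniquely determines $u$ (the line of $\sigma_u-I$ is spanned by $u$, and among the two scalar multiples of a generator of this line, only the one of norm $1$ can appear).

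Next, I would prove the conjugation identity: for $g\in O(2m,q)$ and $u$ of norm one, $g\sigma_u g^{-1}=\sigma_{g(u)}$. This is a direct calculation of the same shape as Lemma \ref{conjugate_elements}: writing $v=g(u)$ and using that $g$ preserves $f$,
\[
(g\sigma_u g^{-1})(w)=g(g^{-1}w+f(u,g^{-1}w)u)=w+f(gu,w)gu=w+f(v,w)v=\sigma_v(w),
\]
and $v$ still has norm one because $g$ preserves $Q$.

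The key transitivity input is the fact already invoked in the excerpt: $O(2m,q)$ acts transitively on the set of vectors $w$ with $Q(w)=\lambda$ for any fixed $\lambda\in\F_q$; in particular, transitively on norm-one vectors. Combining this with the conjugation identity, any two orthogonal transvections $\sigma_u$ and $\sigma_v$ are conjugate in $O(2m,q)$, so there is a single conjugacy class of transvections. The size of this class is the number of orthogonal transvections, which by the bijection of Lemma \ref{orthogonal_transvections} equals the number of norm-one vectors in $W$. No step here looks like a genuine obstacle; the only point to handle carefully is the injectivity $u\mapsto\sigma_u$ (so that the class size is not overcounted), and the invocation of transitivity on norm-one vectors, which is standard (and already cited in the surrounding discussion).
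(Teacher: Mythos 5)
Your argument is correct and is exactly the one the paper has in mind (the paper omits the formal proof, merely citing the transitive action on norm-one vectors and the analogue of Lemma \ref{conjugate_elements}): the conjugation identity $g\sigma_u g^{-1}=\sigma_{g(u)}$ together with transitivity on norm-one vectors gives a single class, and the bijection of Lemma \ref{orthogonal_transvections} gives its size. The only blemish is your phrase ``among the two scalar multiples'' in the injectivity step: the line spanned by $u$ has $q-1$ nonzero scalar multiples, and the correct observation is that exactly one of them has norm one because $\lambda\mapsto\lambda^2$ is a bijection of $\F_q$ when $q$ is even (equivalently, $\sigma_{\lambda u}=\sigma_u$ forces $\lambda^2=1$, hence $\lambda=1$).
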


We close this section by establishing an analogue of Lemma \ref{is_the_whole_symplectic_group}.

\begin{lemma} \label{is_the_whole_orthogonal_group}
Let $q$ be a power of $2$ and let $G$ be a subgroup of $O(2m,q)$. Suppose that $G$ contains
a transvection and also acts transitively on the vectors of norm one in the underlying
space. Then $G$ is the whole of $O(2m,q)$.
\end{lemma}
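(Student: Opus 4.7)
The plan is to mirror the proof of Corollary \ref{is_the_whole_symplectic_group} in the orthogonal setting. The two ingredients used there were the transitive action of the group on the nonzero vectors (giving all transvections in the symplectic conjugacy class via conjugation), and the generation of $Sp(2m,q)$ by transvections. For the orthogonal case in characteristic $2$, the analogues are the transitive action of $G$ on norm-one vectors, Lemma \ref{conjugacy_classes_of_orthogonal_transvections} (which tells us there is a single $O(2m,q)$-conjugacy class of transvections), and the generation of $O(2m,q)$ by its transvections.

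Let $\sigma\in G$ be a transvection. By the analysis preceding Lemma \ref{orthogonal_transvections}, we may write
\[
\sigma(w) = w + f(u,w)\,u
\]
for a unique $u\in W$ with $Q(u)=1$. Let $\tau\in O(2m,q)$ be any transvection; necessarily $\tau(w) = w + f(v,w)\,v$ with $Q(v)=1$. By the transitivity hypothesis on norm-one vectors, there exists $g\in G$ with $g(u)=v$. The computation of Lemma \ref{conjugate_elements} now applies verbatim — it uses only that $g$ is an isometry of the alternating form $f$ (which is true, since $O(2m,q)\subseteq Sp(2m,q)$ when $q$ is even) and that $g(u)=v$ — and yields $g\sigma g^{-1}=\tau$. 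Hence $G$ contains the full $O(2m,q)$-conjugacy class of $\sigma$, which by Lemma \ref{conjugacy_classes_of_orthogonal_transvections} is the set of all transvections in $O(2m,q)$.

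To finish, I would invoke the characteristic-two analogue of the Cartan--Dieudonn\'e theorem: for $q$ even, $O^{\pm}(2m,q)$ is generated by its transvections, a standard result in the theory of classical groups for which a reference such as \cite{Wil} should suffice. Combined with the previous step this gives $G=O(2m,q)$. The step I expect to require the most care in writing up is locating this generation statement in the literature in exactly the form used here; as a fallback one could follow the alternative route of Corollary \ref{is_the_whole_symplectic_group}, using Lemma \ref{contains_conjugacy_class} to conclude that $G$ contains a normal subgroup $N$ of $O(2m,q)$ containing every transvection, and then exploiting the fact that a transvection in characteristic $2$ has Dickson invariant $1$ (so lies outside $\Omega^{\pm}(2m,q)$) to conclude, via the known normal subgroup structure of $O^{\pm}(2m,q)$, that $N$ cannot be proper.
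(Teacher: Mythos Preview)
Your proposal is correct and follows essentially the same route as the paper: use the transitive action on norm-one vectors together with the conjugation computation of Lemma~\ref{conjugate_elements} to show $G$ contains every orthogonal transvection, then invoke generation of $O(2m,q)$ by transvections. The paper cites exactly the reference you anticipated (Section~3.8.2 and Exercise~3.33 of \cite{Wil}) for this last step, so your concern about locating it is unfounded; your fallback via the normal subgroup structure is not needed.
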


\begin{proof}
We essentially duplicate the proof of Lemma \ref{is_the_whole_symplectic_group}, but use the fact
that $O(2m,q)$ is generated by its transvections. Since $G$ contains a transvection, and acts transitively on the vectors of norm one, it contains all orthogonal transvections, by the argument outlined to prove Lemma \ref{conjugacy_classes_of_orthogonal_transvections}. But the group
$O(2m,q)$ is generated by its transvections, as discussed in Section 3.8.2 and exercise 3.33
of \cite{Wil}. It follows that $G$ is $O(2m,q)$.
\end{proof}

\subsection{On $q$-anti-palindromic polynomials}

\noindent  Let $\Phi$ be a monic $q$-polynomial
of even $q$-degree $2m$ in $\F_q[x]$. Write
\[
\Phi=\sum_{i=0}^{2m}c_ix^{q^i}
\]
where the coefficients $c_i$ are in $\F_q$, and $c_{2m}=1$. 

We say that 
$\Phi$ is $q$-anti-palindromic if
 $c_i=-c_{2m-i}$ for $0\leq i\leq
m$. When $q$ is odd, this implies that $c_m=0$. When $q$ is even, we will assume
additionally that $c_m=0$. Given that $\Phi$ is $q$-anti-palindromic, this is equivalent
to saying that $\Phi(1)=0$. 



We wish to consider $q$-polynomials $L$ of $q$-degree $2m$ in $\F_q(t)[x]$ that have the form
\[
L(x)=\Phi(x)+t^qx^{q^{m+1}}-tx^{q^{m-1}},
\]
where $\Phi$ is $q$-anti-palindromic in $\F_q[x]$, as described above. We assume throughout that
$m\geq 2$. 

\begin{lemma} \label{special_polynomial}
Let $\Phi$ be a monic $q$-anti-palindromic $q$-polynomial of even $q$-degree $2m$ in
$\F_q[x]$. 
Then there is a monic polynomial $f$ of degree $q^{m-1}(q^m+1)$ in $\F_q[x]$
such that
\[
f^q-f=x^{q^m}\Phi.
\]
The polynomial $f$ is unique if we assume that $f(0)=0$. The powers of $x$ that occur
in $f$ with nonzero coefficient have the form $x^{q^i+q^j}$ for suitable non-negative
integers $i$ and $j$.
\end{lemma}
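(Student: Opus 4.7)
The plan is to construct $f$ explicitly, monomial by monomial, by exploiting the telescoping identity that governs the operator $y\mapsto y^q - y$ on polynomials of the form $x^{q^a+q^b}$ with coefficients in $\F_q$. Since the Frobenius fixes $\F_q$, one has $(d\,x^{q^a+q^b})^q = d\,x^{q^{a+1}+q^{b+1}}$ for any $d\in\F_q$, so a single summand $d\,x^{q^a+q^b}$ in $f$ contributes exactly $d\bigl(x^{q^{a+1}+q^{b+1}} - x^{q^a+q^b}\bigr)$ to $f^q - f$. Chaining such summands along $(a,b),(a+1,b+1),\dots$ realizes any difference $x^{q^s+q^t}-x^{q^{s'}+q^{t'}}$ in which $t-s=t'-s'$.

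I would then rewrite the right-hand side using the anti-palindromic symmetry. Since $c_m=0$ and $c_{2m-k}=-c_k$,
\[
x^{q^m}\Phi \;=\; \sum_{i=0}^{2m} c_i\,x^{q^i+q^m} \;=\; \sum_{k=0}^{m-1} c_k\bigl(x^{q^k+q^m} - x^{q^{2m-k}+q^m}\bigr).
\]
For each $k$ the two target monomials have the same difference of exponents $m-k$, so I would link them by the chain $(k+j,\,m+j)$, $j=0,\dots,m-k-1$, starting at $(k,m)$ and ending at $(m,2m-k)$. This motivates the definition
\[
f \;:=\; -\sum_{k=0}^{m-1}\,\sum_{j=0}^{m-k-1} c_k\,x^{q^{k+j}+q^{m+j}},
\]
and a direct telescoping in $j$, together with $-c_k=c_{2m-k}$ and $c_m=0$, yields $f^q - f = x^{q^m}\Phi$.

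It remains to verify the stated properties. By construction every monomial in $f$ has the form $x^{q^i+q^j}$, giving the shape assertion. The largest exponent is attained uniquely at $(k,j)=(0,m-1)$, namely $q^{m-1}+q^{2m-1}=q^{m-1}(q^m+1)$; its coefficient is $-c_0$, and since $c_{2m}=1$ forces $c_0=-1$ by anti-palindromy, this coefficient equals $1$, so $f$ is monic of the claimed degree. For uniqueness, if $f_1,f_2\in\F_q[x]$ both solve the equation and vanish at $0$, then $g=f_1-f_2\in\F_q[x]$ satisfies $g^q=g$ and $g(0)=0$; writing $g=\sum a_n x^n$ with $a_n\in\F_q$ and comparing coefficients of $\sum a_n x^{nq}$ and $\sum a_n x^n$ forces $a_n=0$ for every $n\geq 1$ and $a_0=0$, so $g=0$.

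The one genuinely nontrivial point is identifying the chain structure, that is, noticing that the $k$-th chain must have length $m-k$, being exactly the number of $(+1,+1)$ steps from $(k,m)$ to $(m,2m-k)$. Once this ansatz is adopted the rest is mechanical: chains for different $k$ have disjoint supports because each pair $(a,b)$ in our sum satisfies $b-a = m-k$, which determines $k$ uniquely, so no cross-cancellation occurs and the telescoping goes through cleanly.
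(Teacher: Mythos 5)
Your proof is correct and essentially the same as the paper's: both use the $\F_q$-linearity of $h\mapsto h^q-h$, pair the terms $c_kx^{q^k+q^m}$ and $c_{2m-k}x^{q^{2m-k}+q^m}$ via anti-palindromy, and construct $f$ from telescoping chains of monomials $x^{q^{k+j}+q^{m+j}}$, checking monicity, the shape of the exponents, and uniqueness up to a constant in the same way. The paper merely packages each chain through the substitution $X=x^{q^{m-k}+1}$ with $g(X)=X^{q^{m-1}}+\cdots+X^{q^k}$, which produces exactly the polynomial $f$ you wrote down.
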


\begin{proof}
Write
\[
\Phi=\sum_{i=0}^{2m} c_i x^{q^i},
\]
where $c_{2m}=1$, $c_i=-c_{2m-i}$ for $0\leq i\leq m$, and $c_m=0$. 
The coefficient of  the monomial $x^{q^m+q^i}$ occurring in $x^{q^m}\Phi$ is the negative of the coefficient of
$x^{q^m+q^{2m-i}}$. Thus, since the mapping sending a polynomial $f$ to $f^q-f$ is
$\F_q$-linear, it suffices to show that we can solve
\[
f^q-f=x^{q^m+q^{2m-i}}-x^{q^m+q^i}
\]
for $0\leq i\leq m-1$. 

We set $X=x^{q^{m-i}+1}$. Then we need to show that we can find a polynomial $g$ in $\F_q[x]$
with
\[
g(X)^q-g(X)=X^{q^m}-X^{q^i}.
\]
We take
\[
g(X)=X^{q^{m-1}}+\cdots +X^{q^i}
\]
and may easily verify that this polynomial has the desired property. Notice then
that
\[
f(x)=x^{q^{m-1}(q^{m-i}+1)}+\cdots +x^{q^i(q^{m-i}+1)},
\]
which implies that $f$ has the stated form in terms of the powers of $x$ that occur with
nonzero coefficients. 

Finally, the kernel of the mapping sending a polynomial $h$ to $h^q-h$ is the subspace of constants and thus $f$ is determined up to the addition of a constant.
\end{proof}

\begin{cor} \label{factoring_the_polynomial}
Let $\Phi$ be a monic $q$-anti-palindromic $q$-polynomial of even $q$-degree $2m$ in
$\F_q[x]$. Let $L$ be the $q$-polynomial in $\F_q(t)[x]$ defined by 
\[
L(x)=\Phi(x)+t^qx^{q^{m+1}}-tx^{q^{m-1}}.
\]
Let $f$ be the monic polynomial that satisfies $f(0)=0$ and $f^q-f=x^{q^m}\Phi$.
Then there is a factorization
\[
L(x)=x^{-q^{m}}\prod_{\lambda\in\F_q}(tx^{q^m+q^{m-1}}+f+\lambda).
\]
Thus $L(x)/x$ has exactly $q$ different irreducible factors. One irreducible factor
has degree $(q^{m-1}-1)(q^m+1)$ and the other $q-1$ factors all have degree $q^{m-1}(q^m+1)$.
\end{cor}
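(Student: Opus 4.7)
The plan is to establish the factorization by a direct algebraic identity, and then identify each of the $q$ factors as irreducible via Corollary \ref{irreducibility_criterion}. Starting from the standard factorization $Y^q - Y = \prod_{\lambda\in\F_q}(Y+\lambda)$ in $\F_q[Y]$, I would substitute $Y = tx^{q^m+q^{m-1}} + f$. Using $Y^q = t^q x^{q^{m+1}+q^m} + f^q$ and the defining relation $f^q - f = x^{q^m}\Phi$ from Lemma \ref{special_polynomial}, this gives
\[
Y^q - Y = t^q x^{q^{m+1}+q^m} - tx^{q^m+q^{m-1}} + (f^q - f) = x^{q^m}\bigl(t^q x^{q^{m+1}} - tx^{q^{m-1}} + \Phi\bigr) = x^{q^m} L(x),
\]
and dividing by $x^{q^m}$ yields the claimed factorization.

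For each $\lambda \in \F_q$ set $P_\lambda(x) := tx^{q^m+q^{m-1}} + f + \lambda$. When $\lambda \neq 0$ the constant term of $f+\lambda$ equals $\lambda \neq 0$, so $f+\lambda$ is coprime to the monomial $x^{q^m+q^{m-1}}$; Corollary \ref{irreducibility_criterion} then shows that $P_\lambda$ is irreducible in $\F_q(t)[x]$, with $x$-degree $\deg f = q^{m-1}(q^m+1)$.

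The genuine work is in the exceptional case $\lambda = 0$. Here I would first identify the lowest monomial of $f$: since $c_0 = -c_{2m} = -1$, the polynomial $\Phi$ has lowest term $-x$, so $x^{q^m}\Phi$ has lowest term $-x^{q^m+1}$. In the identity $f^q - f = x^{q^m}\Phi$, the $f^q$ piece can only contribute monomials whose degree is at least $q$ times the lowest degree of $f$, so the lowest monomial of $f^q - f$ must come from $-f$. Hence $f$ has lowest monomial $x^{q^m+1}$ with coefficient $1$. Writing $f = x^{q^m+1} f_1$ with $f_1(0) = 1$, and using $q^m + q^{m-1} = (q^m+1) + (q^{m-1}-1)$ (valid since $m \geq 2$), I obtain
\[
P_0(x) = x^{q^m+1}\bigl(tx^{q^{m-1}-1} + f_1\bigr).
\]
Since $f_1(0) \neq 0$, Corollary \ref{irreducibility_criterion} applies once more to give irreducibility of $tx^{q^{m-1}-1} + f_1$, of $x$-degree $\deg f - (q^m+1) = (q^{m-1}-1)(q^m+1)$.

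Combining everything, the factor $x^{q^m+1}$ split off from $P_0$ together with the overall $x^{-q^m}$ accounts for the single factor of $x$ in $L(x)$, so $L(x)/x$ decomposes as the product of the $q-1$ irreducibles $P_\lambda$ ($\lambda \neq 0$) and the irreducible $tx^{q^{m-1}-1} + f_1$. These $q$ factors are pairwise distinct (the exceptional one has strictly smaller $x$-degree, and the remaining $P_\lambda$ are distinguished by their constant terms), giving the stated result. The only real obstacle is pinning down the lowest monomial of $f$ so as to correctly extract the $\lambda = 0$ factor; everything else is bookkeeping with degrees and direct application of Corollary \ref{irreducibility_criterion}.
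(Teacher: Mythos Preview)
Your proof is correct and follows essentially the same route as the paper: multiply through by $x^{q^m}$, recognize $x^{q^m}L$ as $T^q-T$ with $T=tx^{q^m+q^{m-1}}+f$, factor via $\prod_{\lambda}(T+\lambda)$, and then apply Corollary~\ref{irreducibility_criterion} to each factor. Your treatment of the $\lambda=0$ case is in fact more careful than the paper's, which simply asserts that $x^{q^m+1}$ divides $f$; your argument via the lowest monomial of $f^q-f=x^{q^m}\Phi$ explicitly pins down that the coefficient of $x^{q^m+1}$ in $f$ is $1$, which is exactly what is needed for Corollary~\ref{irreducibility_criterion} to apply to $tx^{q^{m-1}-1}+f_1$.
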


\begin{proof}
We multiply $L$ by $x^{q^m}$ to obtain
\[
x^{q^m}L=t^qx^{q^{m+1}+q^m}-tx^{q^m+q^{m-1}}+x^{q^m}\Phi
\]
and then apply Lemma \ref{special_polynomial} to obtain
\[
x^{q^m}L=t^qx^{q^{m+1}+q^m}-tx^{q^m+q^{m-1}}+f^q-f.
\]
It follows that
\[
x^{q^m}L=T^q-T,
\]
where $T=tx^{q^m+q^{m-1}}+f$. Since we have 
\[
Z^q-Z=\prod_{\lambda\in\F_q}(Z+\lambda),
\]
it follows that
\[
x^{q^m}L=\prod_{\lambda\in\F_q}(tx^{q^m+q^{m-1}}+f+\lambda).
\]

Now provided that $\lambda\neq 0$, a factor $tx^{q^m+q^{m-1}}+f+\lambda$ is irreducible
by Corollary \ref{irreducibility_criterion}. Thus we have $q-1$ irreducible factors all
of degree $\deg f=q^{m-1}(q^m+1)$. Since $x^{q^m+1}$ divides $f$, $x^{-(q^m+1)}(tx^{q^m+q^{m-1}}+f)$ is also a factor, and it is
irreducible, of degree $\deg f-q^m-1$. 
\end{proof}

\begin{cor} \label{group_acts_irreducibly}
Let $L$ be as in Corollary \ref{factoring_the_polynomial} and let $G$ be the Galois group
of $L$. Then $G$ acts irreducibly on the space of roots of $L$.
\end{cor}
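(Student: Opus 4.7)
The plan is to exploit the factorization in Corollary \ref{factoring_the_polynomial} together with an elementary arithmetic observation. First I observe that Lemma \ref{vector_space} applies here: the coefficient of $x$ in $L$ is the constant term of $\Phi$, which is $c_0 = -c_{2m} = -1 \neq 0$ by the $q$-anti-palindromic hypothesis, so the space of roots $V$ is a $2m$-dimensional $\F_q$-vector space on which $G$ acts as $\F_q$-linear automorphisms. The Galois orbits on the nonzero roots of $L$ are in bijection with the irreducible factors of $L(x)/x$ in $\F_q(t)[x]$, and the orbit lengths are the degrees of these factors. By Corollary \ref{factoring_the_polynomial}, the possible orbit sizes are $(q^{m-1}-1)(q^m+1)$, occurring once, and $q^{m-1}(q^m+1)$, occurring with multiplicity $q-1$.

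Now suppose, aiming for a contradiction, that $W$ is a nonzero $G$-invariant $\F_q$-subspace of $V$ of dimension $d$. Then $W \setminus \{0\}$ is a union of nonzero $G$-orbits, so
\[
q^d - 1 \;=\; |W| - 1
\]
is a sum of terms taken from the orbit-size list above. Crucially, every such orbit size is divisible by $q^m + 1$, so I conclude that $q^m + 1$ divides $q^d - 1$.

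The final step is a divisibility argument. Since $q^m \equiv -1 \pmod{q^m+1}$, the multiplicative order of $q$ modulo $q^m+1$ divides $2m$ and is strictly greater than $m$ (otherwise $q^m + 1 \le q^m - 1$), so the order equals $2m$. Hence $q^m + 1 \mid q^d - 1$ forces $2m \mid d$. Combined with $1 \le d \le \dim V = 2m$, this gives $d = 2m$ and $W = V$, proving that $G$ acts irreducibly on $V$.

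There is no real obstacle in this argument — the substantive content was already extracted in Corollary \ref{factoring_the_polynomial}, where the orbit structure was pinned down. The only point worth double-checking is the common factor $q^m+1$ in both orbit sizes, which is what makes the order-of-$q$ argument bite.
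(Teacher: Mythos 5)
Your proposal is correct and follows essentially the same route as the paper: both use the factorization of $L(x)/x$ from Corollary \ref{factoring_the_polynomial} to identify the $G$-orbit sizes on nonzero roots and then count, forcing $q^{\dim W}-1$ to be a sum of those degrees. The paper leaves the final step as ``easy to see,'' whereas you supply the clean justification (every orbit size is divisible by $q^m+1$, and $q^m+1\mid q^d-1$ forces $2m\mid d$ since the order of $q$ modulo $q^m+1$ is $2m$), which is a welcome filling-in rather than a different method.
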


\begin{proof}
Let $V$ be the space of roots of $L$ and let $U$ be a nonzero $G$-invariant subspace of $V$.
$U\setminus \{0\}$ must be a union of all the roots of various irreducible factors of $L$
and hence $q^{\dim U}-1$ must be expressible as the sum of the degrees of these irreducible
factors. It is easy to see that this can only happen if $\dim U=2m$, so that $U=V$.
\end{proof}

\subsection{A nondegenerate quadratic form}

\noindent We want eventually to prove that the Galois group of the
polynomial studied in Corollary \ref{factoring_the_polynomial} is an orthogonal group.
To this end, we need to construct an invariant quadratic form, which is the content
of our next result.

\begin{thm} \label{invariant_quadratic_form}
Let  $\Phi$ be a monic 
$q$-anti-palindromic $q$-polynomial of even $q$-degree $2m$ in $\F_q[x]$. Assume that the coefficient
of $x^{q^m}$ in $\Phi$ is zero.
Let $L$ be the $q$-polynomial in $\F_q(t)[x]$ defined by
\[
L(x)=\Phi(x)+t^qx^{q^{m+1}}-tx^{q^{m-1}}.
\]
Let $E$ be a splitting field for $L$ over $\F_q(t)$, let $G$ be the Galois
group of $L$, and let $V$ be the space of roots
of $L$ in $E$. 

Then the function $Q:V\to E$ defined by
\[
Q(\alpha)=t\alpha^{q^{m}+q^{m-1}}+f(\alpha)
\]
is a nondegenerate $\F_q$-valued quadratic form on $V$ that is $G$-invariant. Here,
$f$ is the polynomial of degree $q^{m-1}(q^m+1)$ defined in Lemma \ref{special_polynomial}.
\end{thm}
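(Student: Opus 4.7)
The plan is to verify the four required properties in order: $Q$ lands in $\F_q$, $Q$ satisfies the quadratic-form axioms, $Q$ is $G$-invariant, and the polarization of $Q$ is nondegenerate. The first three are essentially bookkeeping; nondegeneracy is the real obstacle.

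That $Q(\alpha)\in\F_q$ follows from a direct Frobenius calculation. Using Lemma \ref{special_polynomial} in the form $f^q - f = x^{q^m}\Phi$, one computes
\[
Q(\alpha)^q - Q(\alpha) = t^q\alpha^{q^{m+1}+q^m} - t\alpha^{q^m+q^{m-1}} + \alpha^{q^m}\Phi(\alpha) = \alpha^{q^m}L(\alpha) = 0,
\]
so $Q(\alpha)$ is fixed by $z\mapsto z^q$. For the quadratic-form property, Lemma \ref{special_polynomial} ensures that every monomial of $f$, and also the extra term $tx^{q^m+q^{m-1}}$, has the shape $cx^{q^i+q^j}$. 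The identity
\[
(u+w)^{q^i+q^j} = u^{q^i+q^j} + w^{q^i+q^j} + u^{q^i}w^{q^j} + u^{q^j}w^{q^i}
\]
then yields $Q(u+w) = Q(u) + Q(w) + B(u,w)$ for an explicit bilinear $B$ (whose values lie in $\F_q$ by applying the first step to $Q(u+w)$, $Q(u)$ and $Q(w)$), while $Q(\lambda w) = \lambda^2 Q(w)$ for $\lambda\in\F_q$ is immediate from $\lambda^{q^i}=\lambda$. For $G$-invariance, each $\sigma\in G$ fixes $\F_q(t)$ (hence the coefficients of $Q$), and $Q(\alpha)\in\F_q$ is itself $\sigma$-fixed, so $Q(\sigma\alpha) = \sigma(Q(\alpha)) = Q(\alpha)$.

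The main obstacle is nondegeneracy of $B$. The strategy is to use Corollary \ref{group_acts_irreducibly}: $G$ acts irreducibly on $V$, so the radical of the $G$-invariant form $B$ is a $G$-invariant subspace, hence $\{0\}$ or $V$, and it suffices to show $B\not\equiv 0$. A degree comparison shows $Q$ is not identically zero on $V$: the polynomial $Q(x) = tx^{q^m+q^{m-1}} + f(x)$ has degree $q^{2m-1}+q^{m-1}$ (leading term from $f$), which is strictly less than $\deg L = q^{2m}$, so $L\nmid Q$ and $Q$ cannot vanish on all $q^{2m}$ roots of $L$. When $q$ is odd, $B(v,v)=2Q(v)$ forces $B\not\equiv 0$ directly. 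When $q$ is even, suppose for contradiction $B\equiv 0$; then $Q$ is additive on $V$, and combined with $Q(\lambda v)=\lambda^2 Q(v)$ this makes $\ker Q:=\{v\in V:Q(v)=0\}$ an $\F_q$-subspace of $V$ (closed under scalars since $Q(\lambda v)=\lambda^2 Q(v)$, closed under addition by the assumed additivity), and it is $G$-invariant, so by irreducibility it is $\{0\}$ or $V$. Since $Q\not\equiv 0$, we would need $\ker Q=\{0\}$, so $Q\colon V\setminus\{0\}\to\F_q\setminus\{0\}$; but by pigeonhole there exist distinct $u,v$ with $Q(u)=Q(v)$, giving $Q(u+v)=2Q(u)=0$ in characteristic $2$, and hence $u+v\in\ker Q=\{0\}$, i.e.\ $u=v$, a contradiction. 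Thus $B\not\equiv 0$ in every characteristic, and irreducibility finishes the proof.
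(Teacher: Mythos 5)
Your proposal is correct and follows essentially the same route as the paper: the Frobenius computation via $f^q-f=x^{q^m}\Phi$ to get $\F_q$-values, the monomial-shape argument for the quadratic-form identity, $G$-invariance from $Q(\alpha)\in\F_q$, and nondegeneracy via Corollary \ref{group_acts_irreducibly} combined with the degree count $\deg\bigl(tx^{q^m+q^{m-1}}+f\bigr)=q^{m-1}(q^m+1)<q^{2m}$. The only divergence is the endgame of the nondegeneracy step, where you split by characteristic (using $B(v,v)=2Q(v)$ for odd $q$ and a kernel-subspace-plus-pigeonhole argument for even $q$) instead of the paper's observation that an additive $Q$ is semilinear with $G$-invariant kernel of codimension at most one, forcing $Q\equiv 0$; both variants rest on the same two ingredients and yours is a valid, if slightly longer, finish.
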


\begin{proof}
Let $\alpha$ be any element of $V$. Then $L(\alpha)=0$ and hence
\[
t^q\alpha^{q^{m+1}}-t\alpha^{q^{m-1}}=-\Phi(\alpha).
\]
We multiply by $\alpha^{q^m}$ to obtain 
\[
t^q\alpha^{q^{m+1}+q^m}-t\alpha^{q^m+q^{m-1}}=-\alpha^{q^m}\Phi(\alpha).
\]
Given the definition of $f$, we obtain
\[
t^q\alpha^{q^{m+1}+q^m}-t\alpha^{q^m+q^{m-1}}=-f(\alpha)^q+f(\alpha).
\]
It follows that
\[
Q(\alpha)^q=Q(\alpha),
\]
and thus, since this holds for all $\alpha$, $Q$ maps $V$ into $\F_q$.

We now need to show that $Q$ is a quadratic form. We first note that as $f$ is an $\F_q$-combination of monomials of the type $x^{q^i+q^j}$, we have
\[
f(\alpha+\beta)=f(\alpha)+f(\beta)+B(\alpha, \beta),
\]
for all $\alpha$ and $\beta$ in $V$, where $B$ is a symmetric $\F_q$-bilinear mapping
of $V\times V$ into $E$, which is alternating when $q$ is even. The same argument shows that 
\[
t(\alpha+\beta)^{q^m+q^{m-1}}=t\alpha^{q^m+q^{m-1}}+t\beta^{q^m+q^{m-1}}+A(\alpha,\beta),
\]
where $A$ is also a symmetric  $\F_q$-bilinear mapping
of $V\times V$ into $E$, which is alternating when $q$ is even.

It follows that
\[
Q(\alpha+\beta)=Q(\alpha)+Q(\beta)+C(\alpha, \beta),
\]
where $C=A+B$ is also symmetric, and alternating
when $q$ is even. Since we already know that $Q$ maps $V$ into $\F_q$, it follows that $C(\alpha, \beta)\in \F_q$. This shows that $C:V\times V\to \F_q$ is a symmetric
bilinear form, alternating in the even characteristic case,  and we deduce that $Q$ is an $\F_q$-valued quadratic form on $V$.

Next, we show that $Q$ is $G$-invariant.  This is easily established. For 
$Q(\alpha)$ is an $\F_q(t)$-sum of powers of $\alpha$ and thus $\sigma(Q(\alpha))=
Q(\sigma(\alpha))$ for any $\sigma$ in $G$. But as we have shown that $Q(\alpha)$ is in
$\F_q$, 
\[
\sigma(Q(\alpha))=Q(\alpha)=Q(\sigma(\alpha)).
\]
This shows that $Q$ is $G$-invariant, as required.

Finally, we show that the polarization $C$ is nondegenerate. Let $R$ be the radical of $C$. This
is a $G$-invariant subspace of $V$ and hence is either trivial, in which case $C$ is nondegenerate, or $R=V$, by Corollary \ref{group_acts_irreducibly}. Suppose, if possible, that
$R=V$, so that $C$ is identically zero. Then we have
\[
Q(\alpha+\beta)=Q(\alpha)+Q(\beta)
\]
for all $\alpha$ and $\beta$ in $V$. Since we also have $Q(\lambda\alpha)=\lambda^2Q(\alpha)$
for all $\lambda\in\F_q$ and all $\alpha\in V$, $Q$ is a semilinear function from $V$ into
$\F_q$. Thus the kernel of $Q$, consisting of those $\alpha$ with $Q(\alpha)=0$,
is a $G$-invariant subspace of $V$ of codimension at most one. Since we know that
$G$ acts irreducibly on $V$, it follows that $Q$ is zero on $V$.

Given the definition of $Q$, we must have
\[
Q(\alpha)=t\alpha^{q^{m}+q^{m-1}}+f(\alpha)=0
\]
for all $\alpha$. Thus the polynomial $tx^{q^{m}+q^{m-1}}+f(x)$ vanishes on $V$. Since the polynomial has degree $\deg f=q^{m-1}(q^m+1)$, whereas $|V|=q^{2m}$, we have a contradiction.
Hence $Q$ is indeed nondegenerate, as required.
\end{proof}


\begin{cor} \label{type_of_orthogonal_group}
Assume the hypotheses of Theorem \ref{invariant_quadratic_form}. Then the Galois group
$G$ is a subgroup of the orthogonal group $O^-(2m,q)$ that acts transitively on the subsets
of vectors $v$ with $Q(v)=\lambda$, for all $\lambda\in\F_q$. 
\end{cor}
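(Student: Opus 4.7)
The plan is to read off both conclusions directly from the explicit factorization of $L(x)/x$ supplied by Corollary \ref{factoring_the_polynomial}, combined with the formula for the quadratic form $Q$ in Theorem \ref{invariant_quadratic_form}. That theorem already places $G$ inside an orthogonal group of \emph{some} type on $V$, so the remaining tasks are to pin down the type as $O^-(2m,q)$ and to establish the orbit statement on each $Q$-level set.

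To determine the type I would count the nonzero isotropic vectors of $Q$. The key observation is that, for $\alpha\neq 0$ in $V$, one has $Q(\alpha)=-\lambda$ exactly when $\alpha$ is a root of the corresponding factor in the identity
\[
x^{q^m}L(x)=\prod_{\lambda\in\F_q}\bigl(tx^{q^m+q^{m-1}}+f+\lambda\bigr).
\]
Indeed $t\alpha^{q^m+q^{m-1}}+f(\alpha)+\lambda=0$ is just the equation $Q(\alpha)=-\lambda$. Hence the set of nonzero isotropic vectors coincides with the root set of the $\lambda=0$ factor, which (after dividing out $x^{q^m+1}$) is irreducible of degree $(q^{m-1}-1)(q^m+1)$. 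This is precisely the \emph{smaller} of the two possible isotropic counts listed at the start of the section, namely the one characterizing $O^-(2m,q)$; therefore $G\leq O^-(2m,q)$.

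For transitivity, the Galois group of any separable polynomial acts transitively on the roots of each of its irreducible factors over the base field. Applied to the $q$ irreducible factors of $L(x)/x$ and translated through the correspondence $\lambda\leftrightarrow -Q$ above, this gives that $G$ acts transitively on $\{v\in V:Q(v)=\mu\}$ for each nonzero $\mu\in\F_q$ and on the set of nonzero $v\in V$ with $Q(v)=0$. As $\lambda$ runs over $\F_q$ so does $-\lambda$, covering every value of $\mu$.

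There is no serious obstacle beyond the setup already done in Theorem \ref{invariant_quadratic_form} and Corollary \ref{factoring_the_polynomial}; the proof is essentially a bookkeeping translation between irreducible factors of $L$ and level sets of $Q$. The only subtlety worth flagging is that, since $G$ fixes $0\in V$, for the value $\mu=0$ the transitive action is on the nonzero isotropic vectors rather than on the full set $Q^{-1}(0)$. This is consistent with Corollary \ref{factoring_the_polynomial}, where the factor $x^{q^m+1}$ corresponding to the fixed zero vector is explicitly divided out before the irreducible degree $(q^{m-1}-1)(q^m+1)$ is read off.
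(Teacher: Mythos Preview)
Your argument is correct and close in spirit to the paper's, but the mechanism you use to match $G$-orbits with $Q$-level sets is genuinely different. You exploit the explicit formula $Q(\alpha)=t\alpha^{q^m+q^{m-1}}+f(\alpha)$ to identify the root set of the factor $tx^{q^m+q^{m-1}}+f+\lambda$ with the level set $Q^{-1}(-\lambda)$ directly; the type $O^-$ then falls out by reading off the isotropic count $(q^{m-1}-1)(q^m+1)$. The paper does not make this identification. Instead it observes that $G$ has exactly $q$ orbits on $V^\times$ (from the $q$ irreducible factors of $L/x$), that $O(2m,q)$ also has exactly $q$ orbits on $V^\times$ (the nonzero $Q$-level sets), and that $G\le O(2m,q)$ forces the $G$-orbits to refine the $O(2m,q)$-orbits; equal counts then force equality, and comparing orbit sizes with the two candidate lists distinguishes $O^-$ from $O^+$. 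Your route is more explicit and tells you which factor corresponds to which value of $Q$; the paper's route is a clean pigeonhole that never needs the formula for $Q$ beyond knowing it is $G$-invariant. Both are short, and neither has a gap; your caveat about the $\mu=0$ level set (transitivity on nonzero isotropic vectors only) is exactly right and matches the paper's implicit restriction to $V^\times$.
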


\begin{proof}
We have shown in Corollary \ref{factoring_the_polynomial} that $L(x)/x$ splits into
$q$ irreducible factors, of which one has degree $(q^{m-1}-1)(q^m+1)$, and $q-1$ have degree
$q^{m-1}(q^m+1)$. It follows that $G$ has exactly $q$ orbits on $V^\times$, and the
sizes of the orbits are the degrees of these irreducible factors.

Now $G$ is a subgroup of $O(2m,q)$ and the orthogonal group acts on $V^\times$ with exactly
$q$ orbits, these being the subsets of $V^\times$ consisting of vectors $v$ with
$Q(v)=\lambda$, as $\lambda$ runs over $\F_q$. It follows that the orbits of $G$
and $O(2m,q)$ are identical. We can identify the orthogonal group as $O^-(2m,q)$, since
the orbit sizes for $O^+(2m,q)$ are different (namely, one orbit of size $(q^{m-1}+1)(q^m-1)$ and $q-1$ orbits of size
$q^{m-1}(q^m-1)$).
\end{proof}

\subsection{The orthogonal group as a Galois group}

\noindent We assume that $\Phi$ is a
$q$-anti-palindromic $q$-polynomial of even $q$-degree $2m$ in $\F_q[x]$. Write
\[
\Phi=\sum_{i=0}^{2m} c_i x^{q^i}
\]
where the coefficients $c_i$ are in $\F_q$, $c_{2m}=1$, $c_i=-c_{2m-i}$ for $0\leq i\leq
m-1$, and $c_m=0$. 

We form the $q$-polynomial
\[
L=L(x)=\Phi(x)+t^qx^{q^{m+1}}-tx^{q^{m-1}}
\]
in $\F_q(t)[x]$. 
Our aim is to show that under fairly general circumstances concerning $\Phi$, the
Galois group of $L$ over $\F_q(t)$ is $O^-(2m,q)$. 

It will be convenient to deal with even $q$ and odd $q$ separately, because
for even $q$ we use transvections, whereas for odd $q$, we use reflections.

{\bf Case is $q$ even}

It turns out that when $q$ is even, the main result is surprisingly similar to Theorem \ref{symplectic_group_as_Galois_group}.

\begin{thm} \label{orthogonal_group_in_characteristic_2}
Let $q$ be a power of $2$ and 
$M(x)$ be a monic self-reciprocal polynomial of even degree $2m$, $m\geq 2$, 
in $\F_q[x]$. Suppose that $x-1$ is a factor of $M(x)$ with multiplicity two 
and all roots of $M_1(x)=M(x)/(x-1)^2$ are simple. 
Set
\[
M(x)=x^{2m}+c_{2m-1}x^{2m-1}+\cdots +c_1 x+1,
\]
where the coefficients $c_i$ satisfy $c_i=c_{2m-i}$ (and necessarily
$c_m=0$). Define the $q$-(anti)-palindromic $q$-polynomial $\Phi$ by
\[
\Phi(x)=x^{q^{2m}}+c_{2m-1}x^{q^{2m-1}}+\cdots +c_1 x^q+x
\]
in $\F_q[x]$ and similarly define $L$ by
\[
L(x)=\Phi(x)+t^qx^{q^{m+1}}+tx^{q^{m-1}}
\]
in $\F_q(t)[x]$.
Let $G$ be the Galois group  of $L(x)$ over $\F_q(t)$.
Then there is a prime ideal of degree $1$ in $\F_q[t]$
whose corresponding Frobenius class in $G$ contains  an element a power of which is 
an orthogonal transvection.
\end{thm}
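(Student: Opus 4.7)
The plan is to imitate the proof of Theorem~\ref{symplectic_group_as_Galois_group} almost verbatim, using the orthogonal-group machinery of the previous subsections in place of the symplectic one. First I would check that the hypotheses really do place us in the setting of Corollary~\ref{type_of_orthogonal_group}. In characteristic $2$, self-reciprocity pairs $c_i$ with $c_{2m-i}$ for $i\ne m$ and $2c_i=0$, so
\[
M(1)=\sum_{i=0}^{2m}c_i=c_m,
\]
and $(x-1)\mid M$ forces $c_m=0$. Hence $\Phi$ is a $q$-palindromic $q$-polynomial with vanishing $x^{q^m}$-term, which in characteristic $2$ coincides with the $q$-anti-palindromic condition assumed in Theorem~\ref{invariant_quadratic_form}; moreover $L=\Phi+t^q x^{q^{m+1}}+tx^{q^{m-1}}$ agrees with the polynomial of Corollary~\ref{type_of_orthogonal_group} since $-t=t$. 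That corollary then gives $G\subseteq O^-(2m,q)$.

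Next I would specialize $t$ to the degree-$1$ prime ideal $(t)\subset\F_q[t]$. The reduced polynomial is $\overline L=\Phi$, whose $x$-coefficient is $c_0=1\ne 0$, so $\Phi$ is separable. Corollary~\ref{cyclic_element} then produces a Frobenius element $\sigma\in G$ acting cyclically on the space of roots $V$ with minimal polynomial $M(x)=(x-1)^2 M_1(x)$.

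Now I would invoke Lemma~\ref{extension_lemma} with $r=2$ and $m_2=M_1$; the hypotheses that $M_1$ is coprime to $x-1$ and square-free are exactly what is assumed in the theorem. The lemma yields a commuting factorization $\sigma=\sigma_1\sigma_2$ in which $\sigma_1$ is a power of $\sigma$, has minimal polynomial $(x-1)^2$, and acts trivially on the $M_1$-primary summand of $V$. Since $\sigma$ is cyclic, the $(x-1)^2$-primary summand has dimension $2$, so $\sigma_1-I$ has rank $1$ on $V$; thus $\sigma_1$ is a linear transvection. Being a power of $\sigma$, it lies in $G\subseteq O^-(2m,q)$, and in characteristic $2$ a linear transvection contained in an orthogonal group is automatically an orthogonal transvection: its defining vector must have norm one, by the computation preceding Lemma~\ref{orthogonal_transvections}.

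The only delicate point I foresee is the bookkeeping in the first paragraph: one has to recognize that in characteristic $2$ the palindromic and anti-palindromic conditions coincide, that $c_m=0$ is forced by $(x-1)\mid M$ together with self-reciprocity, and that the sign in front of $tx^{q^{m-1}}$ is irrelevant, so that $L$ literally matches the polynomial already studied in the previous subsection. Once that identification is made, the proof is a word-for-word adaptation of Theorem~\ref{symplectic_group_as_Galois_group}, with \emph{orthogonal transvection} replacing \emph{symplectic transvection} at the end via Lemma~\ref{orthogonal_transvections}.
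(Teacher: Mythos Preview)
Your proposal is correct and follows essentially the same approach as the paper: invoke Theorem~\ref{invariant_quadratic_form} and Corollary~\ref{type_of_orthogonal_group} to place $G$ inside $O^-(2m,q)$, specialize $t$ to $0$, and then repeat the argument of Theorem~\ref{symplectic_group_as_Galois_group} via Lemma~\ref{extension_lemma} to extract a transvection as a power of the Frobenius element. The paper's proof is terser---it simply says ``the argument used in the proof of Theorem~\ref{symplectic_group_as_Galois_group} shows that there is a power of $\sigma$ that is an orthogonal transvection''---while you spell out the bookkeeping (why $c_m=0$, why the sign on $tx^{q^{m-1}}$ is irrelevant, why the resulting transvection is orthogonal), but the underlying argument is identical.
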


\begin{proof}
Let $V$ be the space of roots of $L$ in its splitting field over $\F_q(t)$. It follows
from Theorem \ref{invariant_quadratic_form} that there is a nondegenerate $G$-invariant
quadratic form defined on $V$. 
 Corollary \ref{type_of_orthogonal_group} then implies that $G$ is a subgroup
 of $O^-(2m,q)$  that acts transitively
on the vectors of norm one. 

Let $\sigma$ be an element of the Frobenius class of $G$ defined by the specialization of $t$ to 0. The argument used in the proof of Theorem \ref{symplectic_group_as_Galois_group}
shows that there is a power of $\sigma$ that is an orthogonal transvection. 
\end{proof}

Theorem \ref{choices_for_Phi} tells us how many different choices we have for $M(x)$ and hence
for $\Phi$: they are the same as in the symplectic case in even characteristic.

\begin{cor} \label{orthogonal_group_in_characteristic_22}
Let $L(x)$ be as in Theorem \ref{orthogonal_group_in_characteristic_2}.
Then the Galois group $G$ of $L(x)$ over $\F_q(t)$ is $O^-(2m,q)$.
\end{cor}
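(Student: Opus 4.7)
The plan is to deduce this corollary by combining three ingredients already established in the paper, in the same spirit as the proof of Corollary \ref{symplectic_group_as_Galois_group2} in the symplectic case. First I would invoke Theorem \ref{orthogonal_group_in_characteristic_2} to produce an element $\sigma$ of the Frobenius class associated to the specialization $t \mapsto 0$ such that some power of $\sigma$ is an orthogonal transvection; this power lies in $G$ because $\sigma$ itself lies in $G$. Hence $G$ is a subgroup of $O^-(2m,q)$ containing at least one orthogonal transvection.

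Next I would appeal to Corollary \ref{type_of_orthogonal_group}, which shows that the Galois group $G$ has the same orbits on $V^{\times}$ as the full orthogonal group $O^-(2m,q)$, in particular that $G$ acts transitively on the set of vectors of norm one. At this stage the two hypotheses of Lemma \ref{is_the_whole_orthogonal_group} are in place: $G \leq O(2m,q)$, $G$ contains a transvection, and $G$ is transitive on norm-one vectors.

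Applying Lemma \ref{is_the_whole_orthogonal_group} then yields $G = O^-(2m,q)$, which is the desired conclusion. There is no real obstacle here since all the substantive work, namely the construction of the $G$-invariant quadratic form (Theorem \ref{invariant_quadratic_form}), the identification of the orthogonal type via orbit sizes (Corollary \ref{type_of_orthogonal_group}), and the production of a transvection from the Frobenius element at $t=0$ (Theorem \ref{orthogonal_group_in_characteristic_2}), has already been carried out. The only thing to be careful about is making sure the hypothesis of Lemma \ref{is_the_whole_orthogonal_group} (``transitive on norm-one vectors'') is interpreted correctly: the transitivity of $G$ on the orbit of $O^-(2m,q)$ containing norm-one vectors is exactly what Corollary \ref{type_of_orthogonal_group} gives, so the application is direct.
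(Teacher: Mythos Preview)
Your proposal is correct and follows essentially the same route as the paper's own proof: invoke Theorem \ref{orthogonal_group_in_characteristic_2} to obtain an orthogonal transvection in $G$, use the transitivity on norm-one vectors (which the paper records inside the proof of Theorem \ref{orthogonal_group_in_characteristic_2} via Corollary \ref{type_of_orthogonal_group}, and which you cite directly), and then apply Lemma \ref{is_the_whole_orthogonal_group}. The only difference is cosmetic---you spell out the transitivity step explicitly, whereas the paper treats it as already established.
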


\begin{proof}
Theorem \ref{orthogonal_group_in_characteristic_2} implies that $G$
contains an orthogonal  transvection.
Lemma \ref{is_the_whole_orthogonal_group}  implies that $G$ is $O^-(2m,q)$.
\end{proof}

{\bf Case $q$ is odd}

We consider next the odd characteristic case. Here, our construction seems to be less flexible.
Let $M(x)$ be a monic self-reciprocal polynomial of even degree $2m$ in $\F_q[x]$, where $q$ is odd. Suppose that $M(0)=-1$. Write
\[
M(x)=\sum_{i=0}^{2m}a_ix^i,
\]
where $a_{2m}=1$ and $a_{2m-i}=-a_i$ for $0\leq i\leq 2m$. 

Let $\Phi$ be the $q$-anti-palindromic $q$-polynomial in $\F_q[x]$ defined by 
\[
\Phi(x)=\sum_{i=0}^{2m}a_ix^{q^i},
\]
where the $a_i$ are the coefficients of $M$. Thus $M$ is the conventional associate of
$\Phi$. 

With this notation, we have the following theorem.

\begin{thm} \label{orthogonal_group_in_odd_characteristic}
Suppose that $-1$ is a simple root of $M$ and all other roots of $M$ in its splitting field
over $\F_q$ have odd order. Let the $q$-polynomial 
$L$ in $\F_q(t)[x]$ be defined by
\[
L(x)=\Phi(x)+t^qx^{q^{m+1}}-tx^{q^{m-1}}
\]
in $\F_q(t)[x]$. Then the Galois group $G$ of $L$ over $\F_q(t)$ is $O^-(2m,q)$. 
\end{thm}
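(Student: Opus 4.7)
My plan is to mirror the pattern of the symplectic and even-characteristic-orthogonal theorems: use the Frobenius class arising from the specialization $t\mapsto 0$ to produce in $G$ a cyclic element with minimal polynomial $M(x)$, strip off its order-$2$ part to obtain an orthogonal reflection inside $G$, and then apply Lemma \ref{acts_Transitively_on_reflections}. The ambient containment $G\le O^-(2m,q)$ and the transitivity of $G$ on each norm level $\{v\in V:Q(v)=\lambda\}$ are supplied by Theorem \ref{invariant_quadratic_form} and Corollary \ref{type_of_orthogonal_group}. Since $M(0)=-1\neq 0$, the $x$-coefficient of the reduced polynomial $\overline{L}=\Phi$ is nonzero, so $\Phi$ has no repeated roots and Corollary \ref{cyclic_element} produces $\sigma\in G$ that acts cyclically on $V$ with minimal polynomial $M(x)$.

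Factor $M(x)=(x+1)M_1(x)$. By hypothesis, $-1$ is not a root of $M_1$ and every root of $M_1$ has odd multiplicative order; combined with $p$ odd, this forces $\sigma|_{V'}$ to have odd order on the complementary summand $V'$ of the primary decomposition $V=V_{-1}\oplus V'$. Here $\dim V_{-1}=1$, since $-1$ is a simple root of $M$. Mimicking Lemma \ref{extension_lemma}---with $x+1$ replacing $x-1$ and ``order $2$'' replacing ``order a power of $p$''---I decompose $\sigma=\sigma_1\sigma_0=\sigma_0\sigma_1$, with $\sigma_1$ acting as $-1$ on $V_{-1}$ and the identity on $V'$, and $\sigma_0$ conversely. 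Since $\gcd(2,\operatorname{ord}(\sigma_0))=1$, Bezout writes each factor as a power of $\sigma$; in particular $\sigma_1\in G$.

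Because $\sigma_1\in G\le O(V)$ has order $2$ with eigenspaces $V_{-1}$ and $V'$, the isometry identity $B(u,v)=B(\sigma_1 u,\sigma_1 v)=-B(u,v)$ for $u\in V_{-1}$ and $v\in V'$ forces $B(u,v)=0$ (using odd characteristic). Hence $V_{-1}^{\perp}=V'$, $V_{-1}$ is non-degenerate, and $\sigma_1$ is an orthogonal reflection through the anisotropic line $V_{-1}$. Conjugating $\sigma_1$ by $G$ and using the transitivity of $G$ on vectors of norm $Q(u)$ places every reflection through a vector of that norm inside $G$; Lemma \ref{acts_Transitively_on_reflections} then forces $G=O^-(2m,q)$.

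The main obstacle is a subtlety in this last step: in odd characteristic $O^-(2m,q)$ harbours two conjugacy classes of reflections, distinguished by whether the norm of the reflecting vector is a square, whereas Corollary \ref{type_of_orthogonal_group} only gives transitivity on each individual norm level. One must therefore verify, perhaps via the normal-subgroup structure of $O^-(2m,q)$ combined with the fact that $\sigma$ itself already has determinant $-1$ in $G$, that containment of a single complete class of reflections suffices to generate $O^-(2m,q)$; alternatively, one could produce a reflection of the other class by a different Frobenius specialisation.
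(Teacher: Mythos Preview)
Your approach is exactly the paper's: specialise $t\mapsto 0$, obtain a cyclic $\sigma$ with minimal polynomial $M(x)$, split off the $(x+1)$-primary piece to get a commuting factorisation $\sigma=\sigma_{-1}\sigma_0$ with $\sigma_0$ of odd order (so $\sigma_{-1}$ is an odd power of $\sigma$ and hence in $G$), observe that $\sigma_{-1}$ is a reflection, and invoke Lemma~\ref{acts_Transitively_on_reflections} together with Corollary~\ref{type_of_orthogonal_group}. Your verification that $V_{-1}\perp V'$ is a nice extra detail the paper omits.

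Regarding the ``obstacle'' you flag: the paper does not confront it either. In the paragraphs preceding Lemma~\ref{acts_Transitively_on_reflections} the paper adopts the convention that a \emph{reflection} is, by definition, the involution $w\mapsto w-f(w,u)u$ attached to a vector $u$ with $Q(u)=1$; the Cartan--Dieudonn\'e citation and the lemma are both stated under this convention. With that reading, Corollary~\ref{type_of_orthogonal_group} (transitivity on vectors of each fixed norm, in particular norm one) literally supplies the hypothesis of Lemma~\ref{acts_Transitively_on_reflections}, and the paper's proof simply applies the lemma without further comment. So your plan, stopped before the final paragraph of worry, \emph{is} the paper's proof. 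Your concern about two square-classes of reflecting vectors is a reasonable question about the paper's conventions and about how much Lemma~\ref{acts_Transitively_on_reflections} is really asserting, but it is not a gap in your argument relative to the paper's.
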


\begin{proof}
We specialize $t$ to 0. The corresponding Frobenius class of $G$ consists of elements $\sigma$
with minimal polynomial $M(x)$ when they act on the space $V$ of roots of $L$. 
The theory of the primary decomposition shows that there is a direct sum decomposition 
$V=V_{-1}\oplus V_0$ of $V$ into $\sigma$-invariant subspaces $V_{-1}$ and $V_0$, where $V_{-1}$
is the $-1$-eigenspace of $\sigma$ and the minimal polynomial of $\sigma$ acting on
$V_0$ is $M(x)/(x+1)$. 

We know that correspondingly there is a commuting factorization
\[
\sigma=\sigma_{-1}\sigma_0=\sigma_{0}\sigma_{-1}
\]
where the minimal polynomial of $\sigma_{-1}$ is $x+1$ and the minimal polynomial of $\sigma_0$ is $M(x)/(x+1)$. 

Our hypothesis on the roots of $M$ ensures that $\sigma_0$ has odd order. This is a consequence
of Theorem 3.11 of \cite{LN}, as explained in the proof of Lemma \ref{order_prime_to_p} of this paper. It follows
that there is an odd power of $\sigma$ that equals $\sigma_{-1}$. But $\sigma_{-1}$ is a
reflection and thus $G$ contains a reflection. Since by Corollary \ref{type_of_orthogonal_group}, $G$ acts transitively
on the elements of norm one, Lemma \ref{acts_Transitively_on_reflections} shows that $G$ is
$O^-(2m,q)$.
\end{proof}

Concerning a specific choice of $M$, we may use the polynomial $(x+1)(x-1)^{2m-1}$. Then
$\sigma_0$ has order a power of $p$ and this enables the construction to be accomplished.

\section{A connection with the Weyl group}

\noindent In Subsection 3.2, we have studied  polynomials
of the form $L(x)=\Phi(x)+tx^{q^m}$, where $\Phi$ is a monic $q$-palindromic polynomial
of $q$-degree $2m$ in $\F_q[x]$. As we have seen, $\Phi$ has a conventional associate
$M(x)$, say, in $\F_q[x]$, where $M(x)$ is a monic self-reciprocal polynomial of degree $2m$.
We would like to propose the polynomial $M(x)+tx^m$ in $\F_q(t)[x]$ as a type of conventional associate of
$L(x)$. We can of course extend this concept to any $q$-polynomial over $\F_q(t)$, replacing
$x^{q^i}$ by $x^i$ and retaining the corresponding coefficients. 

Note that $M_t(x)=M(x)+tx^m$ is irreducible and it is also self-reciprocal. Let $E$ be a splitting field for $M_t(x)$ over $\F_q(t)$ and let $H$ be its Galois group. As $M_t$ is irreducible, $H$ acts
transitively on the roots of $M_t$. Furthermore, because $M_t$ is self-reciprocal, if $\alpha$ is a root of $M_t$ in $E$, so also is $\alpha^{-1}$. Thus we can arrange the roots of $M_t$ into $m$ subsets $B_1$, \dots, $B_m$, where $B_i$ consists of the roots $\alpha_i, \alpha_i^{-1}$, $1\leq i\leq m$. 

Since $H$ permutes the roots transitively, it also permutes the subsets $B_i$ transitively.
For suppose that $1\leq i,j\leq m$. Then given roots $\alpha_i$ and $\alpha_j$, we can find
$\sigma$ in $H$ with $\sigma(\alpha_i)=\alpha_j$. It follows that $\sigma(B_i)=B_j$.
The subsets $B_i$ constitute a system of imprimitivity for the action on $H$ on the roots.
There is an induced transitive permutation action of $H$ on the $B_i$ and this determines a
homomorphism from $H$ into the symmetric group $S_m$. The kernel, $N$, say, of the homomorphism
consists of those elements $\sigma$ in $H$ that satisfy
\[
\sigma(\alpha_i)=\alpha_i^{\pm 1}
\]
for $1\leq i\leq m$. It is easy to see that any such $\sigma$ satisfies $\sigma^2=1$
and we deduce that $N$ is an elementary abelian 2-group.

This discussion has shown that the Galois group $H$ of $M_t$ contains a normal elementary
abelian 2-subgroup $N$ such that $H/N$ is isomorphic to a transitive subgroup of $S_m$. We see
in particular that $|H|$ divides $2^mm!$. When $|H|=2^mm!$, $H$ is isomorphic to the
wreath product $Z_2\wr S_m$ of a group of order 2 by $S_m$. This group is also known as the Weyl
group of type $C_m$ and it is the Weyl group of $Sp(2m,q)$, which is the universal group
of type $C_m$ over $\F_q$.

We may thus see a group-theoretic connection between the Galois groups of $L(x)=\Phi(x)+tx^{q^m}$
and of $M_t(x)=M(x)+tx^m$ over $\F_q(t)$, and are led to ask if this connection is coincidental or
in any way explainable at a deeper level. One possible approach to this question is as follows.
We may take the companion matrix of $M_t(x)$ over $\F_q(t)$. This defines an element of
$GL(2m,\F_q(t))$ but as $M_t(x)$ is self-reciprocal, we may in fact consider this as an
element of $Sp(2m, \F_q(t))$, or indeed of the subgroup $Sp(2m, \F_q[t])$ (in other words
the subgroup of $Sp(2m, \F_q(t))$ whose matrix entries are polynomials in $t$). 

In the paper \cite{JKZ}, the authors investigate the principle that for a random element
in an arithmetic subgroup of a reductive group over an algebraic number field, the splitting field of the characteristic polynomial (defined, say, for the natural representation)
has Galois group isomorphic to the Weyl group of the underlying algebraic group. 
In a related result, \cite{DDS} shows that the Galois group of a generic self-reciprocal polynomial
(over an algebraic number field) is the Weyl group of the symplectic group.
In our analogy, we replace the algebraic number field by $\F_q(t)$ and employ
the arithmetic subgroup $Sp(2m, \F_q[t])$ of $Sp(2m, \F_q(t))$. The paper \cite{JKZ}
uses deep number-theoretic results to quantify its findings, whereas we have only provided a variety of examples.

Our intention in this section is to show that
if we assume that $M(x)$ has the properties described in Theorem
\ref{symplectic_group_as_Galois_group} (which guarantee that $L$ has Galois group $Sp(2m,q)$),
then the subgroup $N$ of the Galois group $H$ of $M_t(x)$ has maximum order $2^m$. Simple
examples show that this is not enough to ensure that $H$ is isomorphic to the Weyl group
of $Sp(2m,q)$. 

There are similarities in the proofs of our main results. To realize $Sp(2m,q)$ as a Galois group, we used the specialization of $t$ to 0 to produce a transvection in the Galois group
and then concluded that the Galois group contains all transvections because it acts transitively
on the nonzero roots of $L$. In the associated polynomial case $M_t(x)$, we again use the
specialization of $t$ to 0, and, by means of a theorem of van der Waerden concerning
ramification subgroups, are able to deduce that $H$ contains a transposition on the roots of $M_t$.
Then the transitive action of $H$ on the roots of $M_t$ implies that $H$ contains all transpositions on the roots. This implies that $|N|=2^m$. Thus the analogy employed is between
transvections and transpositions.

Having laid the foundations, we move towards proving our theorem of this section.
Write
\[
M(x)=(x-1)^2g_1(x)\cdots g_{r-1}(x)
\]
where the $g_i$ are different irreducible polynomials and no $g_i$ equals $x\pm 1$ (and
$M(x)$ is self-reciprocal). Let $E$ be a splitting field of $M_t(x)$ over $\F_q(t)$
and let $R$ be the integral closure of $\F_q[t]$ in $E$. Let $I$ be the prime ideal of
$\F_q[t]$ generated by $t$ and let $J$ be a prime ideal of $R$ lying over $I$. Let
$\alpha$ be a root of $M_t(x)$ in $E$ and let $K=\F_t(t)(\alpha)$ be the subfield
of $E$ generated by $\alpha$. Let $S=R\cap K$ be the integral closure of $\F_q[t]$ in $K$.

Let $H$ be the Galois group of $M_t(x)$ over $\F_q(t)$ and let $D$ be the decomposition
group with respect to $J$. Let $T$ be the inertia subgroup of $D$. As is known, $T$ is normal
in $D$, and $T$ is the kernel of the induced action of $D$ on $R/J$. Let $P=IS$ be the principal ideal of $S$ generated by $I$ and let 
\[
P=P_1^{e_1}\cdots P_l^{e_l}
\]
be the prime factorization of $P$ into powers of different prime ideals $P_i$ in $S$. The $e_i$
are positive integers, called the exponents in the factorization. Let $f_i$ be the degree
of $P_i$. This means that $S/P_i$ is a finite field of order $q^{f_i}$. We have the relation
\[
2m=e_1f_1+\cdots +e_lf_l,
\]
where $2m=\deg M_t=\deg M$.

The group $D$ acts as permutations of the roots of $M_t$. A theorem of van der Waerden, \cite{vdw}, states that under the action of $D$, the roots fall into $l$ orbits, 
$\Omega_1$, \ldots, $\Omega_l$, in accordance
with the factorization above. We may choose the notation so that $\Omega_i$
is labelled by $P_i$ and its size is $e_if_i$, $1\leq i\leq l$. Moreover, under
the action of $T$, the orbit $\Omega_i$ decomposes into $f_i$ orbits, each of size $e_i$.

Given all these preliminaries, we can proceed to the proof of a basic tool. Note that, in the context of self-reciprocal polynomials,
when we say that an automorphism acts as a transposition on the roots, we mean that the automorphism interchanges a root and its inverse, and fixes all other roots.

\begin{lemma} \label{decomposition_group_orbits}
Assume the notation introduced above, with
\[
M(x)=(x-1)^2g_1(x)\cdots g_{r-1}(x).
\]
Then $l=r$ and we may choose notation so that $e_1=2$, $f_1=1$, $e_i=1$, $f_i=\deg g_i$ for
all $i>1$. The inertia subgroup $T$ has order $2$ and is generated by an element that acts as transposition on the roots.
\end{lemma}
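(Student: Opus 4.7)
The plan is to analyze the factorization of $M_t(x)$ over the completion $\F_q((t))$ of $\F_q(t)$ at $I=(t)$, read off the $(e_i,f_i)$ data for the primes of $S$ lying over $I$, and then invoke van der Waerden's theorem to convert this into the promised $D$- and $T$-orbit structure on the roots.

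Reducing mod $t$ gives $M_t \equiv M = (x-1)^2 g_1 \cdots g_{r-1}$ with pairwise coprime factors. Hensel's lemma then lifts this to a unique factorization $M_t = h_0 h_1 \cdots h_{r-1}$ over $\F_q[[t]][x]$ with $h_0 \equiv (x-1)^2$ and $h_j \equiv g_j \pmod{t}$. I claim each $h_i$ is irreducible over $\F_q((t))$. For $j \ge 1$, any monic factor of $h_j$ reduces to a divisor of $g_j$, and since $g_j$ is irreducible and $\deg h_j = \deg g_j$, such a factor is $1$ or $h_j$ itself. For $h_0$, which has degree $2$ and reduction $(x-1)^2$, write a root as $\alpha = 1 + \beta$; using $M(x) = (x-1)^2 N(x)$ with $N(1) \ne 0$, the equation $M_t(\alpha)=0$ becomes $\beta^2 N(1+\beta) = -t(1+\beta)^m$, which forces $v_t(\beta) = 1/2$. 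Hence $\beta \notin \F_q((t))$, so $h_0$ is irreducible and defines a totally ramified quadratic extension.

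The irreducible factors of $M_t$ over $\F_q((t))$ are in bijection with the primes $P_1,\dots,P_l$ of $S$ above $I$, so $l = r$. The factor $h_0$ yields a prime with $e = 2$ and residue field $\F_q$ (so $f = 1$); each $h_j$ for $j \ge 1$ yields an unramified prime with $e = 1$ and $f = \deg g_j$. Van der Waerden's theorem, as cited just before the lemma, then says $D$ has $r$ orbits on the roots of sizes $e_i f_i$ and, within each, $T$ has $f_i$ orbits of size $e_i$. Translated: $T$ swaps the two roots reducing to $1$ and fixes every other root. Since $H$, and hence $T$, acts faithfully on the roots of $M_t$, $|T| = 2$. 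Finally, because $(x-1)^2$ is self-reciprocal and Hensel lifts are unique, $h_0^* = h_0$, so the two roots of $h_0$ are a root and its inverse, and the nontrivial element of $T$ acts as a transposition in the sense defined by the paper.

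The main obstacle is the irreducibility of $h_0$: a priori the degree-$2$ Hensel factor above $(x-1)^2$ could either stay irreducible (totally ramified, giving $e=2$ as wanted) or split into two linear factors over $\F_q((t))$ (each unramified with $e = f = 1$, which would give $l = r+1$ primes and trivial inertia at each). Ruling out the split case is precisely the Newton-polygon / valuation step above; that argument is characteristic-free, since its only input is the hypothesis $N(1) \ne 0$ that $(x-1)^2$ exactly divides $M$.
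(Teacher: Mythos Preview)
Your proof is correct and follows the same strategy as the paper: read off the $(e_i,f_i)$ data from the factorization of the reduction $M(x)$, then invoke van der Waerden's theorem. The paper's proof is much terser---it simply asserts that ``the prime factorization of $P$ in $S$ is determined by the factorization of $M(x)$ in $\F_q[x]$,'' with the exponents and residue degrees matching multiplicities and degrees, and then asserts that the two roots swapped by $T$ are a root and its inverse. You supply arguments for both of these steps that the paper omits: the Newton-polygon computation (using $N(1)\neq 0$, which is guaranteed by the standing hypothesis that no $g_i$ equals $x\pm 1$) rules out the possibility that the degree-$2$ Hensel factor above $(x-1)^2$ splits into two unramified linear factors, and the observation $h_0^*=h_0$ via uniqueness of Hensel lifts confirms that the two roots in that orbit are mutually inverse. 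These additions are genuine, since the naive Kummer--Dedekind correspondence can fail precisely at primes where the reduction has repeated factors, so your care there is appropriate.
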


\begin{proof}
The prime factorization of $P$ in $S$ is determined by the factorization of $M(x)$
in $\F_q[x]$. The exponents $e_i$ are the multiplicities of the irreducible factors
of $M(x)$ and the degrees of the primes are the degrees of the corresponding irreducible
factors. Thus we may choose the notation so that $e_1=2$, $f_1=1$, all other $e_i$ are 1,
$f_i=\deg g_{i-1}$, for $i>1$. We apply van der Waerden's theorem to deduce that 
$T$ acts trivially on all orbits $B_i$ for $i>1$ and it acts as a transposition on $B_1$.

Thus if $\tau$ generates $T$, $\tau$ interchanges $\alpha_1$ and $\alpha_1^{-1}$, and fixes all
other roots. This implies that $T$ acts as stated.
\end{proof}

\begin{thm} \label{maximal_size_2^m}
Assume the notation introduced above, with
\[
M(x)=(x-1)^2g_1(x)\cdots g_{r-1}(x)
\]
and 
\[
M_t(x)=M(x)+tx^m.
\]
Let $H$ be the Galois group of $M_t(x)$ over $\F_q(t)$. Then $H$ contains
all transpositions on the roots of $M_t$ and hence contains a normal
elementary abelian $2$-subgroup $N$ of order $2^m$. 
\end{thm}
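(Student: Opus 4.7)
The plan is to exhibit $m$ commuting transpositions in $H$, one swapping the pair of inverses inside each block $B_i$. Together these generate an elementary abelian subgroup of $N$ of order $2^m$, and combined with the a priori bound $|N| \leq 2^m$ established in the discussion preceding the theorem, this will give $|N| = 2^m$.

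Lemma \ref{decomposition_group_orbits} already hands us one such element $\tau \in H$: it interchanges $\alpha_1$ and $\alpha_1^{-1}$ and fixes every other root of $M_t$. In particular $\tau$ lies in $N$ and acts as a block transposition on $B_1$. To transport $\tau$ to the remaining blocks, I would use that $M_t(x)$ is irreducible over $\F_q(t)$ by Corollary \ref{irreducibility_criterion}, so that $H$ acts transitively on the roots of $M_t$. Since every $\sigma \in H$ is a field automorphism, $\sigma(\gamma^{-1}) = \sigma(\gamma)^{-1}$, so $\sigma$ sends each inverse pair to an inverse pair, and the transitive action of $H$ on roots descends to a transitive action on the blocks $B_1, \dots, B_m$.

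For each $i$, choose $\sigma_i \in H$ with $\sigma_i(B_1) = B_i$ and set $\tau_i = \sigma_i \tau \sigma_i^{-1}$. Then $\tau_i$ swaps the two elements of $B_i$ and fixes every root outside $B_i$, so each $\tau_i$ lies in $N$. The $\tau_i$ have pairwise disjoint support, hence they commute and jointly generate an elementary abelian subgroup of order $2^m$ inside $N$. This forces $|N| = 2^m$ and, as a byproduct, shows that $H$ contains every block transposition, as claimed.

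I do not anticipate a serious obstacle here: all the arithmetic heavy lifting — namely the identification of the inertia group at the prime above $(t)$ via van der Waerden's theorem, and the fact that the two colliding roots are indeed inverses of each other — has been carried out in Lemma \ref{decomposition_group_orbits}. The one conceptual point worth emphasizing is that irreducibility of $M_t(x)$ is what upgrades a single transposition into a generating set for $N$; without it, $H$ might only permute the blocks inside a single orbit and the conjugation argument would produce transpositions only in that orbit.
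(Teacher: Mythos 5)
Your proof is correct and follows essentially the same route as the paper: take the transposition $\tau$ supplied by Lemma \ref{decomposition_group_orbits}, use the irreducibility of $M_t(x)$ (hence transitivity of $H$ on the roots and blocks) to conjugate it into every block $B_i$, and conclude $|N|=2^m$ from the resulting commuting transpositions together with the bound $|N|\leq 2^m$. The paper's argument is the same conjugation step, phrased with an element $h$ sending $\alpha_i$ to $\alpha_1$, so there is nothing to correct.
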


\begin{proof}
Let the roots of $M_t$ be $\alpha_1$, \dots, $\alpha_m$ and their inverses. $H$ acts transitively
on the roots since $M_t$ is irreducible in $\F_q(t)$. We may assume by Lemma \ref{decomposition_group_orbits} that $H$ contains a transposition $\tau$ that interchanges
$\alpha_1$ with its inverse. Let $i$ be greater than 1 and let $h\in H$ satisfy
$h(\alpha_i)=\alpha_1$. Then we find that $h^{-1}\tau h$ is the transposition that interchanges
$\alpha_i$ and its inverse.

It follows that $H$ contains all transpositions on the roots and thus $|N|=2^m$. The rest
is clear from previous discussion.
\end{proof}

\begin{cor} \label{wreath_product}
Assume the notation of Theorem \ref{maximal_size_2^m}. Then $H$ contains a subgroup
$H_0$ isomorphic to $H/N$ with $H_0\cap N=1$. $H$ is isomorphic to the wreath product
of a cyclic group $Z_2$ of order $2$ by $H_0$, where $H_0$ permutes $m$ copies of $Z_2$
as it permutes the $m$ subsets $B_i$ of roots.
\end{cor}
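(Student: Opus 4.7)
The plan is to view $H$ as a permutation group on the $2m$ roots of $M_t(x)$ and identify it concretely as a subgroup of the imprimitive wreath product on these roots, then use the order computation of Theorem~\ref{maximal_size_2^m} to pin $H$ down exactly.

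First I would regard $H$ as a subgroup of the symmetric group $S_{2m}$ on the set of roots of $M_t(x)$. As shown in the paragraph preceding Theorem~\ref{maximal_size_2^m}, the partition of roots into the $m$ pairs $B_i=\{\alpha_i,\alpha_i^{-1}\}$ is preserved by $H$. The full stabilizer of this block system in $S_{2m}$ is the imprimitive wreath product $W=Z_2\wr S_m$, so $H\leq W$. Let $\pi\colon W\to S_m$ be the projection recording the induced permutation of blocks. By the definition of $N$, the restriction $\pi|_H$ has kernel $N$ and image a transitive subgroup $\bar H_0\leq S_m$ canonically isomorphic to $H/N$; hence $H$ is contained in the preimage $\pi^{-1}(\bar H_0)=Z_2\wr\bar H_0$.

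Next I would invoke Theorem~\ref{maximal_size_2^m}: $|N|=2^m$, so
\[
|H|=|N|\cdot|H/N|=2^m|\bar H_0|=|Z_2\wr\bar H_0|.
\]
The inclusion $H\leq Z_2\wr\bar H_0$ inside $W$ is therefore an equality. This already exhibits $H$ as the asserted wreath product, with the action of $\bar H_0$ on the base group $N\cong Z_2^m$ being precisely the permutation of the $m$ factors induced by its action on the blocks $B_i$. Inside the realisation $H=Z_2\wr\bar H_0\leq W\leq S_{2m}$ there is a canonical "top" subgroup $H_0$, namely those elements that permute the blocks according to some $\sigma\in\bar H_0$ while preserving a chosen element in each $B_i$ (equivalently, the elements with trivial base component); this $H_0$ is isomorphic to $\bar H_0\cong H/N$ and intersects $N$ trivially, providing the complement demanded by the first sentence of the corollary.

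The main work has in fact been carried out in Theorem~\ref{maximal_size_2^m}; once $|N|=2^m$ is in hand, the remainder is an order comparison inside the fixed ambient wreath product $W$, and the splitting $H=N\rtimes H_0$ is furnished automatically by the wreath product structure. I do not anticipate any substantial obstacle.
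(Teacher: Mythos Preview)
Your proposal is correct and follows essentially the same route as the paper: embed $H$ in the ambient wreath product $W=Z_2\wr S_m$ acting on the $2m$ roots, use Theorem~\ref{maximal_size_2^m} to see that $N$ is the full base group, and read off the complement $H_0$ as the ``top'' subgroup. The paper phrases the last step as $H_0=H\cap S'$ where $S'\cong S_m$ is an explicitly described top copy in $W$, whereas you reach the same conclusion via the order comparison $|H|=|Z_2\wr\bar H_0|$; these are equivalent formulations of the same argument.
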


\begin{proof}
The action of $H$ on the roots of $M_t$ allows us to identify $H$ as a transitive imprimitive permutation group of degree $2m$, with $m$ blocks $B_i$ of imprimitivity of size 2. Now the subgroup $S$, say, of the symmetric group $S_{2m}$ that permutes the $B_i$ among themselves
contains a normal elementary abelian subgroup of order $2^m$ that maps each block
$B_i$ into itself. Thus, considering $H$ as a subgroup of $S$, we can clearly identify
this normal elementary abelian subgroup of $S$ with the corresponding subgroup
$N$ of $H$.

$S$ also contains a subgroup isomorphic to $S_m$, defined as follows. We may suppose that
the block $B_i$ consists of the numbers $2i-1$, $2i$, for $1\leq i\leq m$. 
Let $\sigma$  be any element of $S_m$. We extend $\sigma$ to 
an element $\sigma'$ acting on the numbers 1, 2, \dots,
$2m$ by setting
\[
\sigma'(2i-1)=2\sigma(i)-1, \quad \sigma'(2i)=2\sigma(i).
\]
The subgroup $S'$, say, of $S$ obtained in this way permutes the blocks $B_i$
as it permutes the numbers 1, 2, \dots, $m$ and is isomorphic to $S_m$. 

Clearly, $N$ intersects $S'$ trivially and $S$ is the semi-direct product $NS'$. 
The complement to $N$ in $H$ is $H_0=H\cap S'$, and this is a transitive subgroup of $S_m$.

\end{proof}

Finally, the question arises: what transitive subgroups $H_0$ of $S_m$ can we obtain
by this process? We do not have a definitive answer but we will show that some
interesting groups can be realized. Recall that we require a self-reciprocal
polynomial $M(x)$ of degree $2m$ for which 1 is a root of multiplicity two,
$-1$ is not a root, and all other roots are simple. We can construct
such a polynomial $M(x)$ in the form $x^mf(x+x^{-1})$, where $f$ is a monic polynomial
of degree $m$. 

\begin{lemma} \label{monodromy}
Suppose that in Theorem \ref{maximal_size_2^m}, we express
$M(x)$ as $x^m f(x+x^{-1})$, for an appropriate choice of $f$. Then the subgroup
$H_0$ of $H$ is isomorphic to the Galois group of $f(x)+t$ over $\F_q(t)$. 
\end{lemma}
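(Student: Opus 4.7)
The plan is to identify the splitting field of $f(x)+t$ as the fixed field of $N$ inside the splitting field of $M_t(x)$, and then quote the isomorphism $H_0\cong H/N$ from Corollary \ref{wreath_product}. Put $K=\F_q(t)$, and let $E$ be the splitting field of $M_t$ over $K$. First I would rewrite
\[
M_t(x)=M(x)+tx^m = x^m\bigl(f(x+x^{-1})+t\bigr),
\]
where the right-hand side is interpreted as an element of $K[x,x^{-1}]$ and each summand $a_i(x+x^{-1})^i$ multiplied by $x^m$ sits in $K[x]$. Thus if $\alpha\in E$ is a nonzero root of $M_t$, then $\beta:=\alpha+\alpha^{-1}$ is a root of the polynomial $f(y)+t\in K[y]$; conversely, any root $\beta$ of $f(y)+t$ yields two roots of $M_t$ by solving the quadratic $x^2-\beta x+1=0$. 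These two roots are $\alpha$ and $\alpha^{-1}$ and they form one of the blocks $B_i$ used to define $N$.

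Next, let $\beta_1,\ldots,\beta_m$ be the roots of $f(y)+t$ in an algebraic closure, and set $L=K(\beta_1,\ldots,\beta_m)$, the splitting field of $f(y)+t$ over $K$. The previous paragraph shows that each $\beta_i$ lies in $E$, so $L\subseteq E$, and furthermore $E=L(\alpha_1,\ldots,\alpha_m)$ where $\alpha_i,\alpha_i^{-1}$ are the roots of $x^2-\beta_ix+1$. An element $\sigma\in H=\mathrm{Gal}(E/K)$ satisfies $\sigma(\beta_i)=\beta_i$ for all $i$ if and only if it preserves each pair $\{\alpha_i,\alpha_i^{-1}\}$ setwise, which is exactly the defining condition for membership in $N$. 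By the fundamental theorem of Galois theory, $N=\mathrm{Gal}(E/L)$, and therefore $H/N\cong\mathrm{Gal}(L/K)$, which is by definition the Galois group of $f(x)+t$ over $\F_q(t)$.

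Finally, Corollary \ref{wreath_product} gives us a complement $H_0\le H$ with $H_0\cap N=1$ and $H=NH_0$, so $H_0\cong H/N$. Combining this with the isomorphism above yields
\[
H_0\;\cong\;H/N\;\cong\;\mathrm{Gal}\bigl(f(x)+t\,/\,\F_q(t)\bigr),
\]
as claimed. The only step that requires a little care is verifying that $L$ is genuinely the fixed field of $N$ (equivalently, that $L\subseteq E$ is a Galois subextension whose associated subgroup is $N$); this follows cleanly from the fact that the $\beta_i$ are precisely the $H$-orbit sums $\alpha_i+\alpha_i^{-1}$ on the blocks, together with separability of $f(y)+t$ (automatic over $K$ since $f(y)+t$ is irreducible by Corollary \ref{irreducibility_criterion} and $K$ has characteristic $p$ with the variable $t$ appearing linearly). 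I expect no substantive obstacle beyond this bookkeeping.
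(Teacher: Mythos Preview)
Your argument is correct and follows essentially the same route as the paper: both proofs identify the fixed field of $N$ with the splitting field of $f(x)+t$ by observing that the block--invariants $\beta_i=\alpha_i+\alpha_i^{-1}$ are exactly the roots of $f(y)+t$, and then invoke $H_0\cong H/N$. The paper carries out the verification that $\sigma(\beta_i)=\beta_i$ forces $\sigma(\alpha_i)\in\{\alpha_i,\alpha_i^{-1}\}$ by a direct algebraic manipulation, whereas you phrase it via the quadratic $x^2-\beta_i x+1$; these are the same computation in different clothing.
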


\begin{proof}
Let the roots of $M_t(x)$ in a splitting field $E$ over $\F_q(t)$ be $\alpha_i$, $1\leq i\leq m$,
and their inverses. 
The normal subgroup $N$ of the Galois group $H$ of $M_t(x)$ maps each root $\alpha_i$ either into
itself or its inverse. We claim that the fixed field of $N$ in $E$ is generated by
the $m$ elements $\alpha_i+\alpha_i^{-1}$, $1\leq i\leq m$.

For, it is clear that $N$ fixes all such expressions. Now suppose that
an element $\sigma$ of $H$ fixes $\alpha+\alpha^{-1}$, where $\alpha$ is one of the roots. Then we have 
\[
\sigma(\alpha+\alpha^{-1})=\sigma(\alpha)+\sigma(\alpha)^{-1}=\alpha+\alpha^{-1}.
\]
Thus
\[
\sigma(\alpha)-\alpha=\alpha^{-1}-\sigma(\alpha)^{-1}=\frac{\sigma(\alpha)-\alpha}{\alpha\sigma(\alpha)}.
\]
It follows that $\sigma(\alpha)=\alpha$ or $\alpha\sigma(\alpha)=1$. We see that
$\sigma$ either fixes $\alpha$ or inverts it.

We deduce that the subgroup of $H$ fixing elementwise the subfield $F$, say, generated over
$\F_q(t)$ by the elements $\alpha_i+\alpha_i^{-1}$ is contained in $N$ and hence equals
$N$ by our observation above. 

Galois theory now tells us that $H/N$ is isomorphic to the Galois group of $F$ over
$\F_q(t)$. But $F$ is generated over $\F_q(t)$ by the elements $\alpha_i+\alpha_i^{-1}$ 
and these satisfy the polynomial equation
\[
\alpha_i^mf(\alpha_i+\alpha_{i}^{-1})=-t\alpha_i^m.
\]
It follows that $F$ is the splitting field over $\F_q(t)$ of $f(x)+t$ and
$H_0\cong H/N$ is isomorphic to the Galois group of this polynomial.

\end{proof}

The investigation of polynomials of the form $f(x)+t$, where $f(x)\in \F_q[x]$,
is quite common in field theory and the resulting Galois group is often called
the arithmetic monodromy group $f(x)+t$. We shall restrict attention to the case of even $q$ for simplicity.
In this case, we want all roots of $f$ to be simple and 0 to be a root. 

A well studied example occurs when $f(x)=x^m+x$. If $m$ is even, $f$ satisfies
the conditions above but if $m$ is odd it does not. Setting $m=2r$ for some
integer $r$, we can enquire about the Galois group of $x^{2r}+x+t$ over $\F_q(t)$,
with $q$ even. 

Magma provides information about the Galois group of such polynomials when $q$ is small
and $2r$ is reasonably small. We make use of the following result, deduced from principles
explained in \cite{L}, Chapter VI, Theorem 1.12, to push this data further.

\begin{lemma} \label{change_of_field}
Let $f(x)$ be a separable polynomial in $\F_q(t)[x]$ and let $G$ be its Galois group over
$\F_q(t)$. Let $s$ be a positive integer and let $H$ be the Galois group
of $f(x)$ considered as a polynomial over $\F_{q^s}(t)$. Then $H$ is isomorphic to
a normal subgroup of $G$ with $G/H$ cyclic of order dividing $s$.
\end{lemma}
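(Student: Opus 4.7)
The plan is to reduce to the standard theorem on natural irrationalities (the result from Lang cited in the statement) together with the fact that $\F_{q^s}(t)/\F_q(t)$ is a cyclic Galois extension of degree $s$.

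First I would fix a splitting field $E$ of $f(x)$ over $\F_q(t)$, so that $G = \operatorname{Gal}(E/\F_q(t))$. A splitting field of $f$ over $\F_{q^s}(t)$ is the compositum $E' = E \cdot \F_{q^s}(t)$, so $H = \operatorname{Gal}(E'/\F_{q^s}(t))$. The extension $\F_{q^s}(t)/\F_q(t)$ is obtained by adjoining constants from the Galois extension $\F_{q^s}/\F_q$; it is Galois with cyclic Galois group of order $s$, generated by the $q$-power Frobenius acting on the constant field.

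Next I would apply the cited theorem (Lang, Ch.~VI, Thm.~1.12). It gives that restriction of automorphisms from $E'$ to $E$ is an injective homomorphism
\[
\rho : H = \operatorname{Gal}(E'/\F_{q^s}(t)) \longrightarrow \operatorname{Gal}(E/\F_q(t)) = G,
\]
whose image is exactly $\operatorname{Gal}(E/F_0)$, where $F_0 = E \cap \F_{q^s}(t)$. Thus $H$ is isomorphic to the subgroup $\operatorname{Gal}(E/F_0)$ of $G$.

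It remains to show that this subgroup is normal with cyclic quotient of order dividing $s$. Since $F_0$ lies between $\F_q(t)$ and $\F_{q^s}(t)$, and the latter is cyclic Galois of degree $s$ over the former, every intermediate field is Galois over $\F_q(t)$ with cyclic Galois group of order dividing $s$ (subgroups of a cyclic group are cyclic, and in a cyclic extension every intermediate field is Galois). Hence $F_0/\F_q(t)$ is Galois, so $\operatorname{Gal}(E/F_0)$ is normal in $G$ and
\[
G/\operatorname{Gal}(E/F_0) \cong \operatorname{Gal}(F_0/\F_q(t)),
\]
which is cyclic of order dividing $s$. Transporting along the isomorphism $\rho$, we conclude that $H$ is isomorphic to a normal subgroup of $G$ with cyclic quotient of order dividing $s$.

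The only nontrivial ingredient is the natural irrationalities statement that identifies $H$ with $\operatorname{Gal}(E/F_0)$; everything else is elementary Galois theory of cyclic extensions. There is no serious obstacle, though one should be careful to use separability of $f$ (granted by hypothesis) so that $E/\F_q(t)$ is Galois and Lang's theorem applies directly.
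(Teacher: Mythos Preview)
Your proposal is correct and follows precisely the route the paper indicates: the paper does not spell out a proof but simply says the lemma is ``deduced from principles explained in \cite{L}, Chapter VI, Theorem 1.12,'' and you have carried out exactly that deduction via natural irrationalities together with the observation that $\F_{q^s}(t)/\F_q(t)$ is cyclic Galois of degree $s$. There is nothing to add.
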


Magma shows that the Galois group of $x^n+x+t$ over $\F_2(t)$ is $A_n$ for $n=10$
and $n=18$. It is the Mathieu group $M_{24}$ for $n=24$. Thus Lemma \ref{change_of_field}
implies that the polynomial has the same Galois group over $\F_q(t)$ for any 2-power
$q$, since the appropriate Galois groups are simple. The Galois group is $S_n$ for $n=12$, $14$ and 20. Therefore,
the Galois group of $x^n+x+t$ over $\F_q(t)$ for any 2-power $q$ is either $A_n$ or $S_n$.

These examples enable us to construct $q$-polynomials over $\F_q(t)$ with Galois
group $Sp(2n,q)$ and associated Weyl group $Z_2\wr A_n$ or $Z_2\wr S_n$,
as well as $Sp(48,q)$, with associated Weyl group $Z_2\wr M_{24}$. There is a lot of flexibility
in this construction and we have provided a few specimens.



\begin{thebibliography}{99}

\bibitem{AI} Shreeram S. Abhyankar, Nicholas F. Inglis, Galois groups of some vectorial polynomials, Trans. Amer. Math. Soc. {\bf 353} (2001), 2941-2969. 


\bibitem{AM} M. Albert, A. Maier, Additive polynomials for finite groups of Lie type,
Israel J. Math. {\bf 186} (2011), 125-195.

\bibitem{DDS} S. Davis, 
W. Duke, X. Sun, Probabilistic Galois theory of reciprocal polynomials, Exposition. Math. {\bf 16} (1998), 263-270.

\bibitem{E} 
N. D. Elkies,
Linearized algebra and finite groups of Lie type. I. Linear and symplectic groups. Applications of curves over finite fields (Seattle, WA, 1997), 77-107, Contemp. Math., 245, Amer. Math. Soc., Providence, RI, 1999.


\bibitem{Gr} Larry C. Grove, \emph{Classical Groups and Geometric Algebra}. Grad. Stud. Math, 39.
American Mathematical Society, Providence, RI, 2002.

\bibitem{SL} Rod Gow, Gary McGuire,
On Galois groups of linearized polynomials related to the special linear group of prime degree,
to appear, Linear Algebra  Appl. (2023).
https://doi.org/10.1016/j.laa.2023.03.004.



\bibitem{J} Nathan Jacobson, \emph{Basic Algebra I} Second ed. W. H. Freeman and Co. New York, 1985.

\bibitem{JKZ} F. Jouve, E. Kowalski, D. Zywina, Splitting fields of random elements in arithmetic groups, Israel
J. Math. {\bf 193} (2013), 263-307.



\bibitem{L} Serge Lang, \emph{Algebra}. Third ed. Addison-Wesley. Reading, Mass.,1993.

\bibitem{LN} Rudolf Lidl, Harald Niederreiter, \emph{Finite Fields}. Addison-Wesley.
Reading, Mass., 1983.

\bibitem{MM} Gunter Malle, B. Heinrich Matzat, \emph{Inverse Galois Theory}. Second ed. Springer
Monogr. Math. Springer, Berlin, 2018. 

\bibitem{R}
Michael Rosen, \emph{Number Theory in Function Fields}, Grad. Texts in Math., 210. Springer, Berlin, 2002.

\bibitem{Wil} Robert A. Wilson, \emph{The Finite Simple Groups}, Grad. Texts in Math., 251. Springer, London, 2009.

\bibitem{vdw}
 B.L. van der Waerden,
Die Zerlegungs und Tr\"agheitsgruppe als Permutationsgruppen, Math. Ann. {\bf 11} (1935), 731-733.

\end{thebibliography}
\end{document}